\newtheorem{thm}{Theorem}[section]
\newtheorem{cor}[thm]{Corollary}
\newtheorem{lem}[thm]{Lemma}
\newtheorem{prop}[thm]{Proposition}
\theoremstyle{definition}
\newtheorem{defn}[thm]{Definition}
\newtheorem{conj}[thm]{Conjecture}
\theoremstyle{remark}
\newtheorem{rem}[thm]{Remark}
\numberwithin{equation}{section}
\newcommand{\supp}{\textnormal{Supp}}
\title {The equivariant coarse Novikov conjecture and coarse embedding\footnote{The first author is supported by NSFC (No. 11871342, 11771143). The second author is supported by NSFC(No. 11771061). The third author is supported by NSF(No. 1564398, 1700021).}}
\author{Benyin Fu \and Xianjin Wang \and Guoliang Yu}
\date{}
\begin{document}
\baselineskip=16pt
\maketitle

\begin{abstract}
The equivariant coarse Novikov conjecture provides an algorithm for determining nonvanishing of equivariant higher index of elliptic differential operators on noncompact manifolds. In this article, we prove the equivariant coarse Novikov conjecture under certain coarse embeddability conditions. More precisely, if a discrete group $\Gamma$ acts on a bounded geometric space $X$ properly, isometrically, and with bounded distortion, $X/\Gamma$ and $\Gamma$ admit coarse embeddings into Hilbert space, then the $\Gamma$-equivariant coarse Novikov conjecture holds for $X$. Here bounded distortion means that for any $\gamma\in\Gamma$, $\sup_{x\in Y} d(\gamma x,x)<\infty$, where $Y$ is a fundamental domain of the $\Gamma$-action on $X$.
\end{abstract}

\section{Introduction}

Let $X$ be a proper metric space, let $\Gamma$ be a countable discrete group. Assume that $\Gamma$ acts on $X$ properly and isometrically. One can define an equivariant higher index map (see~\cite{Connes,RYBook,Shan})
$$Ind_{\Gamma}:\lim\limits_{d\rightarrow \infty}K_*^\Gamma(P_d(X))\rightarrow K_*(C^*(X)^\Gamma),$$
where $K_*^{\Gamma}(P_d(X))$ is the $\Gamma$-equivariant $K$-homology group of Rips complex $P_d(X)$ and $C^*(X)^{\Gamma}$ is the equivariant Roe algebra, $K_*(C^*(X)^{\Gamma})$ is the receptacle of higher index for elliptic differential operators.
The equivariant coarse Novikov conjecture states that the equivariant higher index map is injective. This conjecture provides an algorithm for determining the nonvanishing of equivariant higher index of elliptic differential operators. Nonvanishing of equivariant higher index has important applications to geometry and topology such as the positive scalar curvature problem (see~\cite{GWY, GongYu, Ros, sherry, HR, Tang}). When the $\Gamma$-action is cocompact, i.e., $X/\Gamma$ is compact,
$C^*(X)^\Gamma$ is Morita equivalent to $C_r^*(\Gamma)$, the reduced $C^*$-algebra of $\Gamma$, then the equivariant
higher index map is the Baum-Connes map introduced by Baum, Connes and Higson (see~\cite{BC,BC94}).  When $\Gamma$ is
trivial, the equivariant higher index map is the coarse Baum-Connes map introduced by Roe, Higson and Yu (see~\cite{HR, Ro93, Ro, Yu95-2,DA,FO1,FO2,FO3}).

Gromov~\cite{Gro} introduced the following definition of coarse embedding.

\begin{defn}\label{sec:1:def:1}
Let $X$ and $Y$ be two metric spaces. A map $f:X\rightarrow Y$ is said to be a {\it coarse embedding} if there exist non-decreasing functions $\rho_+, \rho_-$ from $\mathbb{R}_+:=[0,+\infty)$ to $\mathbb{R}_+$ such that
\begin{enumerate}
\item[(1)] $\rho_-(d(x,y))\leq \|f(x)-f(y)\|\leq \rho_+(d(x,y))$ for all $x, y\in X$;
\item[(2)] $\lim\limits_{r\rightarrow +\infty}\rho_{\pm}(r)\rightarrow +\infty$.
\end{enumerate}
\end{defn}
Coarse embeddability into Hilbert space implies the coarse Novikov conjecture~\cite{Yu}. This result can be extended to spaces coarsely embeddable into other spaces with certain uniform geometric properties~\cite{CWY, KY, Shan, ShanWang}. We also remark that the coarse Novikov conjecture still holds for certain expanders which are not coarsely embeddable into Hilbert space~\cite{GWY2,GongWangYu, OyonoYu, Ruffus1, Ruffus2}.

Recall that a discrete metric space $X$ is said to have {\it bounded geometry} if for any $r>0$,
there exists $N>0$, such that the ball in $X$ with radius $r$ contains at most $N$ elements. A subset $Y$ of $X$ is called a {\it fundamental domain} for the $\Gamma$-action on $X$ if $X$ is the disjoint union $X=\sqcup_{\gamma\in\Gamma}\gamma Y$.
We say that $X$ has {\it bounded distortion} if  for any $\gamma\in\Gamma$, 
$$\sup\limits_{x\in Y}d(\gamma x,x)<+\infty,$$  where $Y$ is a fundamental domain for $X$. 
In this paper, we want to prove the following result:

\begin{thm}\label{sec:1:thm:1.2}
Let $\Gamma$ be a countable discrete group, let $X$ be a discrete metric space with bounded geometry. Assume that $\Gamma$ acts on $X$ properly, isometrically, and with bounded distortion.
If both $X/\Gamma$ and $\Gamma$ admit coarse embeddings into a Hilbert space $H$, then the equivariant higher index map $$Ind_{\Gamma}:\lim\limits_{d\rightarrow \infty}K_*^\Gamma(P_d(X))\rightarrow K_*(C^*(X)^\Gamma)$$ is injective.
\end{thm}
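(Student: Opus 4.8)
\textbf{Step 1 (reduction to an evaluation map).} The argument is a Dirac--dual-Dirac argument run through a twisted equivariant algebra, adapting Yu's proof of the coarse Novikov conjecture under coarse embeddability to the fibered, $\Gamma$-equivariant setting, with bounded distortion as the device gluing the two given embeddings. For each $d$ there is a $\Gamma$-equivariant localization algebra $C_L^*(P_d(X))^\Gamma$, and, as in the non-equivariant case, the local index map induces an isomorphism $\varinjlim_d K_*^\Gamma(P_d(X))\xrightarrow{\ \cong\ }\varinjlim_d K_*(C_L^*(P_d(X))^\Gamma)$, proved by a Mayer--Vietoris/partition-of-unity argument over the Rips complexes, on each simplex of which the $\Gamma$-action is cocompact. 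Under this identification $Ind_\Gamma$ becomes the map induced by evaluation at $0$, $e\colon C_L^*(P_d(X))^\Gamma\to C^*(P_d(X))^\Gamma=C^*(X)^\Gamma$. Suppressing $P_d$ and $\varinjlim_d$ from the notation, it therefore suffices to prove that $e_*\colon K_*(C_L^*(X)^\Gamma)\to K_*(C^*(X)^\Gamma)$ is injective.

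\textbf{Step 2 (a $\Gamma$-equivariant fibered coarse embedding of $X$).} Fix a fundamental domain $Y$; this gives the $\Gamma$-equivariant map $\pi\colon X\to\Gamma$ (with $\pi(x)=\gamma$ when $x\in\gamma Y$, so $\pi(\gamma x)=\gamma\pi(x)$) and the quotient $q\colon X\to X/\Gamma$. Using bounded geometry, properness, and bounded distortion one checks that $x\mapsto(q(x),\pi(x))$ realizes $X$ coarsely as a bundle over $X/\Gamma$ with fiber coarsely $\Gamma$, and, crucially, that bounded distortion forces the fiber-identifications between nearby points to differ by a uniformly bounded amount. Given coarse embeddings $g\colon\Gamma\to H$ and $h\colon X/\Gamma\to H$, one then builds from $g$, $h$, $\pi$, $q$ and a $\Gamma$-invariant partition of unity along the orbit directions a $\Gamma$-equivariant \emph{fibered} coarse embedding of $X$ into Hilbert space (in the sense of Chen--Wang--Yu): a $\Gamma$-invariant family $\{F_U\colon U\to\mathcal{H}_U\}$ indexed by the large-scale pieces $U$ of $X$, each $F_U$ a coarse embedding with control functions independent of $U$, the members agreeing up to a uniformly bounded error on overlaps; the base directions are recorded by $h\circ q$ and the fiber directions by $g$. (One cannot in general expect a genuine equivariant coarse embedding of $X$ into a single Hilbert space, since $\Gamma$ need not have the Haagerup property; the fibered version is the correct substitute.)

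\textbf{Step 3 (twisted algebras, Bott map, the square).} Using the Bott algebras $\mathcal{A}(\mathcal{H}_U)$ of Higson--Kasparov--Trout (built from $C_0(\R)$, the Clifford algebras and compact operators of the fiber Hilbert spaces) one forms a twisted equivariant Roe algebra $A^*(X)^\Gamma$ and a twisted equivariant localization algebra $A_L^*(X)^\Gamma$: matrices over $X$ of $\mathcal{A}$-multipliers with finite propagation, a support condition tying the $\mathcal{A}$-variable to the fibered embedding, all $\Gamma$-invariant, plus the usual continuity in $t$ with propagation $\to0$ for the localization version. The equivariant Bott/Dirac asymptotic morphism gives $\beta\colon C^*(X)^\Gamma\to A^*(X)^\Gamma$ and $\beta_L\colon C_L^*(X)^\Gamma\to A_L^*(X)^\Gamma$, which together with the evaluation $e_A\colon A_L^*(X)^\Gamma\to A^*(X)^\Gamma$ fit into a commuting square
$$\begin{array}{ccc}
K_*(C_L^*(X)^\Gamma) & \xrightarrow{\,e_*\,} & K_*(C^*(X)^\Gamma) \\
\big\downarrow{\scriptstyle(\beta_L)_*} & & \big\downarrow{\scriptstyle\beta_*} \\
K_*(A_L^*(X)^\Gamma) & \xrightarrow{\,(e_A)_*\,} & K_*(A^*(X)^\Gamma).
\end{array}$$
Hence it is enough to prove that $(e_A)_*$ is an isomorphism and that $(\beta_L)_*$ is injective (in fact an isomorphism).

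\textbf{Step 4 ($K$-theory computations, and the main difficulty).} That $(\beta_L)_*$ is an isomorphism is a local statement: up to Morita equivalence and Lipschitz homotopy the localization algebra is a field of copies of the Bott algebra over $X$, and $\beta_L$ is fiberwise the Bott morphism, a $K$-theory isomorphism by infinite-dimensional Bott periodicity (valid $\Gamma$-equivariantly); a $\Gamma$-equivariant Mayer--Vietoris patching over $X$ (cocompact on Rips simplices) promotes this to the global statement. The isomorphism $(e_A)_*$ --- the twisted equivariant coarse Baum--Connes statement --- is the heart of the argument: one filters the Bott algebra by its finite-dimensional approximants, where the Dirac argument reduces to honest equivariant finite-dimensional Bott periodicity so that the corresponding evaluation is an isomorphism, and then runs a Mayer--Vietoris argument cutting the fiber Hilbert spaces into uniformly controlled pieces governed by the geometry of $X$ and of the embedding, assembling these finite-dimensional isomorphisms and passing to the limit. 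I expect the principal obstacle to be making this cutting compatible with the $\Gamma$-action: a naive finite-dimensional filtration of a fiber is not $\Gamma$-invariant, so it must be replaced by a $\Gamma$-invariant field of finite-dimensional cuttings adapted to the fibered structure, and one must verify that the resulting subquotients are again twisted Roe-type algebras with controlled propagation and support --- here bounded geometry, properness, and bounded distortion enter together. Granting these two facts, the square forces $e_*$ to be injective, hence $Ind_\Gamma$ is injective.
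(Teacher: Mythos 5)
Your Steps 1 and 3 reproduce the general Dirac--dual-Dirac skeleton (reduce to the evaluation map on localization algebras, then a commuting square with twisted algebras), but your Step 2 is a genuine gap, and it is exactly the point where the paper does something different. Bounded distortion gives, for each fixed $\gamma$, only the bound $\sup_{x\in Y}d(\gamma x,x)<\infty$; it provides no uniform lower control identifying the orbits coarsely with $\Gamma$. Properness of the action yields finiteness of $\{\gamma:\ d(\gamma x,x)\le R\}$ for each fixed $x$, but not uniformly over the fundamental domain $Y$, so the claim that $X$ is coarsely a bundle over $X/\Gamma$ with fibers uniformly coarsely equivalent to $\Gamma$ --- and hence that $g$ and $h$ assemble into a $\Gamma$-equivariant fibered coarse embedding with control functions $\rho_\pm$ independent of the piece $U$ --- is unjustified and, under the stated hypotheses, there is no reason for it to hold. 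Moreover, even granting such a fibered embedding, the Chen--Wang--Yu machinery you invoke is designed for the \emph{maximal} coarse Baum--Connes conjecture; transporting it to the reduced equivariant Roe algebra and extracting injectivity is precisely the ``main difficulty'' your Step 4 names but leaves unresolved (the $\Gamma$-invariant finite-dimensional cutting of the twisted algebra), so the plan stops short of a proof at its crucial step.

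The paper never combines the two embeddings into one embedding (fibered or otherwise) of $X$. Instead it interpolates the $\pi$-localization algebra $C^*_{\pi,L}(P_d(X))^\Gamma$, in which propagation is required to tend to zero only in the $X/\Gamma$-direction, between $C^*_L(P_d(X))^\Gamma$ and $C^*(P_d(X))^\Gamma$, so that $e_*=(e_\pi)_*\circ\widetilde{\tau}_*$, and it handles the two factors with the two embeddings separately. Injectivity of $(e_\pi)_*$ (Theorem 3.12) uses only the embedding $\xi$ of $X/\Gamma$, via the twisted algebras with coefficients $\mathcal{A}(H,\xi)$ and a Mayer--Vietoris decomposition over the quotient, where each piece contributes $C^*_r(\Gamma)\otimes\mathcal{A}(H,\xi)_{O_j}\otimes K$ and an Eilenberg swindle kills the localized kernel; no cutting in the group direction is ever needed because the $\pi$-localization ignores that direction. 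Injectivity of $\widetilde{\tau}_*$ (Theorems 4.8, 4.10, 5.1) uses the embedding of $\Gamma$ only through the Skandalis--Tu--Yu groupoid $Z\rtimes\Gamma$ acting properly on a continuous field of Hilbert spaces, i.e. through the proper $\Gamma$-$C^*$-algebra $\mathcal{A}(\mathcal{H},\eta)$ and a field version of Bott periodicity; properness of the coefficients lets one decompose over cocompact pieces $\Gamma\times_{F_i}S_i$, and bounded distortion enters exactly there, to make the sets $\widetilde{F_i}\cdot\triangle'_j(R)$ uniformly bounded so that full localization and $\pi$-localization agree in $K$-theory. This two-step factorization through $C^*_{\pi,L}$ is the idea missing from your proposal, and it is what replaces the fibered coarse embedding you would otherwise have to construct.
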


When the group $\Gamma$ is trivial, this theorem has been proved by Yu in~\cite{Yu}. In fact, he proved that this index map is an isomorphism. When $X=\Gamma$, this conclusion has been proved by Skandalis, Tu and Yu in~\cite{Skan} by using the Higson's descent technique in~\cite{Hi}.

Theorem \ref{sec:1:thm:1.2} has important application to the existence of $\Gamma$-invariant Riemannian metric with positive scalar curvature for a Riemannian manifold. Let $M$ be
a spin Riemannian manifold with a proper and isometrical action of a discrete group $\Gamma$. 
Recall that a discrete subset $X\subset M$ is called a $\varepsilon$-{\it net} for $M$ if $d(x,x')\geq \varepsilon$ for any $x,x'\in X$ and for all $y\in M$ there is $x\in X$ with $d(y,x)<\varepsilon$.
Assume that $M$ has a $\Gamma$-invariant $\varepsilon$-net $X$ with bounded geometry.
Let $\{\varphi_x\}_{x\in X}$ be a partition of unity subordinate to a $\Gamma$-equivariant open cover $\{U_x\}_{x\in X}$ of $M$ with $x\in U_x$ and $\sup(\text{diam}(U_x))<\infty$, where the ``$\Gamma$-equivariance" of $\{U_x\}_{x\in X}$ means that if $U_{x_0}\in\{U_x\}_{x\in X}$ and $\gamma\in\Gamma$, then $\gamma U_{x_0}\in\{U_x\}_{x\in X}$.
In this case, we can define a $\Gamma$-invariant
map $$\phi:M\rightarrow P_d(X),\quad z\mapsto \sum\limits_{x\in X}\varphi_x(z)x$$ when $d$ is large enough. The map $\phi$ induces a homomorphism
$\phi_*:K_*^\Gamma(M)\rightarrow K_*^\Gamma(P_d(X))$, where $d$ is large enough. If $D$ is a $\Gamma$-invariant Dirac operator on $M$, then $Ind_\Gamma(\phi_*([D]))$ is called the {\it equivariant higher index of $D$}. If this equivariant higher index is nonzero, then $M$ has no $\Gamma$-invariant Riemannian metric with uniformly positive scalar curvature by the Lichnerowicz formula. So we have the following corollary.

\begin{cor}
\label{c:positive}
Let $M$ be a complete spin manifold with bounded geometry and let $\Gamma$ act on $M$ properly, isometrically and with bounded distortion. Assume that $X$ is a $\Gamma$-invariant $\varepsilon$-net of $M$. If $M/\Gamma$ and $\Gamma$ admit coarse embeddings into a Hilbert space, $\phi_*([D])\neq 0$ in $\lim\limits_{d\rightarrow \infty}K_*^\Gamma(P_d(X))$, then $M$ can not have  any $\Gamma$-invariant Riemannian metric with uniformly
positive scalar curvature.
\end{cor}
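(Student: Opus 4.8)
The plan is to deduce the corollary from Theorem~\ref{sec:1:thm:1.2} together with the Lichnerowicz formula, arguing by contradiction. First I would check that the net $X$, equipped with the metric restricted from $M$, satisfies all the hypotheses of Theorem~\ref{sec:1:thm:1.2}. Since $X$ is an $\varepsilon$-net in the bounded-geometry manifold $M$, it is a discrete metric space of bounded geometry, and the inclusion $\iota\colon X\hookrightarrow M$ is a coarse equivalence; $\Gamma$-invariance of $X$ makes the $\Gamma$-action on $X$ proper and isometric. Bounded distortion for $X$ follows from bounded distortion for $M$: if $Y_M$ is a fundamental domain for the $\Gamma$-action on $M$, one can arrange a fundamental domain $Y$ for $X$ with $Y$ contained in a bounded neighbourhood of $Y_M$, whence $\sup_{x\in Y}d(\gamma x,x)<\infty$ for every $\gamma\in\Gamma$. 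Moreover $\iota$ descends to a coarse equivalence $X/\Gamma\to M/\Gamma$, so $X/\Gamma$ coarsely embeds into $H$ because $M/\Gamma$ does, and $\Gamma$ does by hypothesis. Hence Theorem~\ref{sec:1:thm:1.2} applies, and the map $Ind_\Gamma\colon\lim_{d\to\infty}K_*^\Gamma(P_d(X))\to K_*(C^*(X)^\Gamma)$ is injective.

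Next, since $\phi_*([D])\ne 0$ in $\lim_{d\to\infty}K_*^\Gamma(P_d(X))$, injectivity of $Ind_\Gamma$ shows that the equivariant higher index $Ind_\Gamma(\phi_*([D]))$ is a nonzero element of $K_*(C^*(X)^\Gamma)$.

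Now suppose, for contradiction, that $M$ admits a $\Gamma$-invariant Riemannian metric whose scalar curvature satisfies $\kappa\ge\kappa_0$ for some $\kappa_0>0$. By the Lichnerowicz formula the Dirac operator satisfies $D^2=\nabla^*\nabla+\tfrac14\kappa\ge\tfrac14\kappa_0>0$, so $0$ lies in a spectral gap of $D$ that is uniform over $M$. A standard argument then forces the equivariant coarse higher index of $D$ to vanish: choosing a normalizing function that is locally constant near $0$, the idempotent (resp.\ unitary) representing the index class can be taken to be trivial, so $Ind_\Gamma(\phi_*([D]))=0$. Here one uses that, under the canonical isomorphism $K_*(C^*(X)^\Gamma)\cong K_*(C^*(M)^\Gamma)$ induced by the coarse equivalence $\iota$, the composite $Ind_\Gamma\circ\phi_*$ agrees with the equivariant coarse assembly map of $M$ applied to $[D]$. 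This contradicts the previous paragraph, so $M$ has no such metric.

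The genuinely substantive points are the verification in the first paragraph that the hypotheses on the manifold $M$ descend to the net $X$ — in particular that bounded distortion and coarse embeddability of the quotient are preserved under passing to an $\varepsilon$-net — and the bookkeeping identifying $Ind_\Gamma(\phi_*([D]))$ with a coarse index of $D$ on $M$ that manifestly vanishes once $D$ has a uniform spectral gap. The remaining input, the classical implication ``uniform spectral gap $\Rightarrow$ vanishing index'', would be cited rather than reproved.
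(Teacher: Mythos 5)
Your proposal is correct and takes essentially the same route as the paper, which deduces the corollary from Theorem~\ref{sec:1:thm:1.2} together with the Lichnerowicz vanishing argument sketched in the paragraph preceding the statement. Your extra verifications (that bounded geometry, the proper isometric action, bounded distortion, and coarse embeddability of the quotient pass from $M$ to the net $X$, and that $Ind_\Gamma(\phi_*([D]))$ agrees with the coarse index of $D$ on $M$, which vanishes in the presence of a uniform spectral gap) merely spell out what the paper leaves implicit.
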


We remark that $\lim\limits_{d\rightarrow \infty}K^\Gamma_*(P_d(X))$ and $\phi_*([D])$ in the above Corollary~\ref{c:positive} are computable. Hence the injectivity of $Ind_{\Gamma}$ provides an algorithm of determining nonvanishing of the equivariant higher index of $D$.

The paper is organized as follows. In section 2, we recall the equivariant higher index map for a metric space with proper and isometrical group action. In section 3, we define an equivariant $\pi$-localization algebra. There is an evaluation map from this equivariant $\pi$-localization algebra to the equivariant Roe algebra. This evaluation map induces a homomorphism at their $K$-theory level. We prove that this homomorphism is injective. The proof for this uses the twisted equivariant Roe algebras and twisted equivariant $\pi$-localization algebras, which are constructed in a way similar to those in~\cite{Yu} by using the coarse embedding of the quotient space $X/\Gamma$. In section 4, we introduce two variations of the twisted equivariant localization algebras and the twisted equivariant $\pi$-localization algebras by using the coarse embeddings of $\Gamma$ and $X/\Gamma$. We discuss the relations between their K-theory level. In section 5, we introduce two Bott maps by using the coarse embeddings of $\Gamma$ and $X/\Gamma$ and give the proof of the main theorem.

\section{The equivariant higher index map}
In this section, we shall first recall the concepts of equivariant $K$-homology and equivariant Roe algebras and then construct the equivariant higher index map.

Let $X$ be a proper metric space (a metric space is called
$proper$ if every closed ball is compact) and let $\Gamma$ be a countable discrete group acting on $X$ properly and isometrically.
The action of $\Gamma$ on $X$ is $proper$ in the sense that the map
$$X\times \Gamma\rightarrow X\times X, \qquad (x,\gamma)\mapsto (\gamma x,x)$$
is proper, i.e., the preimage of a compact set is compact. $\Gamma$ acts on $X$ {\it
isometrically} if $d(x,y)=d(\gamma x,\gamma y)$ for all $\gamma\in\Gamma$ and $x,y\in X$. In this case, we call $X$ a {\it $\Gamma$-space}. An
$X$-$module$ is a separable Hilbert space $H$ equipped with a faithful and non-degenerate $*$-
representation $\phi:C_0(X)\rightarrow B(H)$ whose range contains no nonzero compact operators, where $C_0(X)$ is
the algebra of all complex valued continuous functions on $X$ which vanish at infinity.

\begin{defn}
Let $X$ be a $\Gamma$-space. For $\gamma \in \Gamma$ and $f\in C_0(X)$, define $\gamma(f)\in C_0(X)$ by $$\gamma(f)(x)=f(\gamma^{-1}x).$$
\end{defn}

\begin{defn}
Let $H$ be an $X$-module. We say that $H$ is a {\it covariant $X$-module} if it is equipped with a
unitary action $\rho$ of $\Gamma$, i.e., $\rho:\Gamma\rightarrow \mathcal {U}(H)$ is a group
homomorphism from $\Gamma$ to the set of unitary elements in $B(H)$, compatible with the action
of $\Gamma$ on $X$ in the sense that for $v\in H$, $T\in B(H)$ and $\gamma\in \Gamma$,
$$\gamma(T)(v)=\rho(\gamma)T\rho(\gamma)^*(v),$$
we call such a triple $(C_0(X), \Gamma, \phi)$ a $covariant\ system$.
\end{defn}

\begin{defn}[\cite{KY}]
\label{d:ad_rep}
A covariant system $(C_0(X), \Gamma, \phi)$ is $admissible$ if there exists a $\Gamma$-Hilbert space $H_X$ and a separable and infinite dimensional $\Gamma$-Hilbert space $H'$ such that
\begin{enumerate}
\item[(1)] $H$ is isomorphic to $H_X\otimes H'$;
\item[(2)] $\varphi=\varphi_0\otimes I$ for some $\Gamma$-equivariant $*$-homomorphism $\varphi_0$ from $C_0(X)$ to $B(H_X)$ such that $\varphi_0(f)$ is not in $K(H_X)$ for any nonzero function $f\in C_0(X)$ and $\varphi_0$ is nondegenerate in the sense that $\{\varphi_0(f)H_X:f\in C_0(X)\}$ is dense in $H_X$;
\item[(3)] for each finite subgroup $F$ of $\Gamma$ and every $F$-invariant Borel subset $E$ of $X$, there exists a trivial $F$-representation $H_E$ such that $$\chi_E H'\cong l^2(F)\otimes H_E.$$
\end{enumerate}

\end{defn}
In the above definition, the $F$-action on $l^2(F)$ is regular, i.e., $(\gamma \xi)(z)=\xi(\gamma^{-1}z)$ for every $\gamma\in F, \xi\in l^2(F)$ and $z\in F$.

\begin{defn}
Let $(C_0(X),\Gamma, \phi)$ be an admissible covariant system.
\begin{enumerate}
\item[(1)] The {\it support} $\supp(T)$ of a bounded linear operator $T:H\rightarrow H$ is the complement of the set of points $(x,y)\in X\times X$ for which there exist $f$ and $f'$ in $C_0(X)$ such that $$\phi(f')T\phi(f)=0,\quad f(x)\neq 0,\quad f'(y)\neq 0.$$
\item[(2)] A bounded operator $T:H\rightarrow H$ has finite propagation if $$\text{sup}\{d(x,y):(x,y)\in \supp(T)\}<\infty.$$
This number is called the {\it propagation of $T$}.
\item[(3)] A bounded operator $T:H\rightarrow H$ is {\it locally compact} if the operators $\phi(f)T$ and $T\phi(f)$ are compact for all $f\in C_0(X)$.
\item[(4)] A bounded operator $T:H\rightarrow H$ is {\it $\Gamma$-invariant} if $\gamma(T)=T$ for all $\gamma\in \Gamma$.
\end{enumerate}
\end{defn}
\begin{defn}
The {\it equivariant Roe algebra} $C^*(X)^{\Gamma}$ is the operator norm closure of the $*$-algebra of all locally compact and $\Gamma$-invariant operators with finite propagation.
\end{defn}

%\begin{prop}[\cite{Shan,Yu95-2}]
%\label{sec:2:prop:eqiv-dep}
%The equivariant Roe algebra $C^*(X, H_X)^{\Gamma}$ does not depend on the choice of $X$-module $H_X$.
%\end{prop}

%For this reason, $C^*(X, H_X)^\Gamma$ can be abbreviated as $C^*(X)^\Gamma$.

The equivariant Roe algebra $C^*(X)^{\Gamma}$ is Morita equivalent to the reduced group $C^*$-algebra $C_r^*(\Gamma)$ when $X/\Gamma$ is compact.
This result is essentially due to John Roe.

\begin{prop}[{\cite[Lemma~5.14]{Ro}}]
\label{morita-roe}
Let $(C_0(X),\Gamma, \phi)$ is an admissible covariant system, moreover if the $\Gamma$-action on $X$ is cocompact, then $C^*(X)^\Gamma$ is $*$-isomorphic to $C_r^*(\Gamma)\otimes \mathcal {K}$, where $C^*_r(\Gamma)$ is the reduced group $C^*$-algebra and $\mathcal {K}$ is the algebra of all compact operators on a separable and infinite dimensional Hilbert space.
\end{prop}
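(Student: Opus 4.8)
The plan is to build an explicit isometric $*$-homomorphism from $C^*(X)^\Gamma$ onto a corner of $C_r^*(\Gamma)\otimes\mathcal K$ and then identify that corner with the whole algebra. Since the action is proper and cocompact, fix a \emph{cutoff function}: a function $c\in C_c(X)$ with $c\geq 0$ and $\sum_{\gamma\in\Gamma}\gamma(c)^2\equiv 1$, the sum being locally finite. (Take any $\widetilde c\in C_c(X)$ equal to $1$ on a compact $K$ with $\Gamma K=X$; properness makes $\sum_\gamma\gamma(\widetilde c)^2$ a continuous $\Gamma$-invariant function bounded below by $1$, and one normalizes.) Define $V\colon H\to \ell^2(\Gamma)\otimes H$ by $(Vv)(\gamma)=\varphi(\gamma(c))\,v$; the cutoff identity gives $V^*V=1$, and the covariance relation $\varphi(\gamma(c))=\rho(\gamma)\varphi(c)\rho(\gamma)^*$ gives $V\rho(g)=(\lambda_g\otimes\rho(g))V$ with $\lambda$ the left regular representation. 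Conjugating by the unitary $W$ on $\ell^2(\Gamma)\otimes H$ defined by $(W\xi)(\gamma)=\rho(\gamma)^*\xi(\gamma)$, which intertwines $\lambda_g\otimes\rho(g)$ with $\lambda_g\otimes 1$, one obtains an isometry $U:=WV$ with $U^*U=1$ and $U\rho(g)=(\lambda_g\otimes 1)U$.

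Set $\Phi(T):=UTU^*$ for $T\in C^*(X)^\Gamma$. Because $U^*U=1$, the assignment $\Phi$ is multiplicative and $*$-preserving, and it is isometric since $T=U^*\Phi(T)U$. Computing the matrix of $\Phi(T)$ indexed by $\Gamma\times\Gamma$, and using $\varphi(\gamma(c))=\rho(\gamma)\varphi(c)\rho(\gamma)^*$ together with the $\Gamma$-invariance $\rho(\gamma)T=T\rho(\gamma)$, one finds that the $(\gamma,\gamma')$-entry equals $\varphi(c)\,T\,\rho(\gamma^{-1}\gamma')\,\varphi(c)$; it depends only on $\gamma^{-1}\gamma'$, so
\[
\Phi(T)=\sum_{h\in S}R_h\otimes b(h),\qquad b(h):=\varphi(c)\,T\,\rho(h)\,\varphi(c),
\]
where $R$ is the right regular representation of $\Gamma$ (whose generated $C^*$-algebra is a copy of $C_r^*(\Gamma)$) and $S\subseteq\Gamma$. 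Rewriting $b(h)=\varphi(c)\,T\varphi(h(c))\,\rho(h)$ and invoking local compactness of $T$ (since $h(c)\in C_c(X)$) shows every $b(h)$ is a compact operator on $H$; finite propagation of $T$ together with properness of the action, applied to the compact set $\supp(c)$, shows $b(h)=0$ for $h$ outside a finite set $S$. Hence $\Phi(T)$ lies in the algebraic tensor product $\C[\Gamma]\odot\mathcal K(H)$, and $\Phi$ extends to an injective $*$-homomorphism $C^*(X)^\Gamma\hookrightarrow C_r^*(\Gamma)\otimes\mathcal K(H)\cong C_r^*(\Gamma)\otimes\mathcal K$, the last isomorphism because $H$ is separable and infinite dimensional.

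It remains to see that $\Phi$ is onto. Its image is the hereditary subalgebra $p\big(C_r^*(\Gamma)\otimes\mathcal K(H)\big)p$, where $p:=UU^*=\sum_h R_h\otimes\varphi(c)\varphi(h(c))\rho(h)$ is a projection in the multiplier algebra. It therefore suffices to show that $p$ is full and Murray--von Neumann equivalent to the unit, for then $p\big(C_r^*(\Gamma)\otimes\mathcal K\big)p\cong C_r^*(\Gamma)\otimes\mathcal K$. This is where admissibility does the work: conditions (2) and (3) of Definition~\ref{d:ad_rep} are exactly what is needed to handle the finite stabilizers of the action and to ensure that $H$ carries the regular representation of $\Gamma$ with infinite multiplicity, from which one deduces both that the ideal generated by the entries $\varphi(c)\varphi(h(c))$ exhausts $\mathcal K(H)$ (fullness) and that $p\sim 1$. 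I expect this verification — surjectivity in the presence of finite stabilizers — to be the main technical point. A cleaner alternative is to fix a basepoint $x_0\in X$, equip $\Gamma$ with the proper left-invariant metric $d_\Gamma(g,g'):=d_X(gx_0,g'x_0)$, observe that $g\mapsto gx_0$ is then a $\Gamma$-equivariant coarse equivalence $\Gamma\to X$ (by properness and cocompactness), invoke functoriality of the equivariant Roe algebra under equivariant coarse equivalence to get $C^*(X)^\Gamma\cong C^*(\Gamma)^\Gamma$, and finish with the transparent computation that a $\Gamma$-invariant, finite-propagation, locally compact operator on $\ell^2(\Gamma)\otimes H'$ is a finite sum $\sum_{h\in S}R_h\otimes k_h$ with $k_h\in\mathcal K(H')$, so that $C^*(\Gamma)^\Gamma\cong C_r^*(\Gamma)\otimes\mathcal K(H')\cong C_r^*(\Gamma)\otimes\mathcal K$.
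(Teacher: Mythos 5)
Your first two-thirds is the standard argument and is essentially sound: the cutoff isometry $V$, the twist by $W$, the computation that the $(\gamma,\gamma')$-entry of $UTU^*$ equals $\varphi(c)\,T\,\rho(\gamma^{-1}\gamma')\,\varphi(c)$, and the check that these entries are compact and vanish outside finitely many $h$ (note the paper itself gives no proof, deferring to Roe's Lemma 5.14; this is the same circle of ideas). The difficulty is that your argument stops exactly where the statement becomes nontrivial, and at two places you assert rather than prove. First, the claim that the image of $\Phi$ is the \emph{whole} corner $p\bigl(C_r^*(\Gamma)\otimes\mathcal K(H)\bigr)p$ needs the reverse inclusion: one must check that $U^*(R_h\otimes k)U=\sum_{\gamma\in\Gamma}\rho(\gamma)\bigl(\varphi(c)\,k\,\varphi(c)\rho(h)^*\bigr)\rho(\gamma)^*$ is a $\Gamma$-invariant, locally compact, finite-propagation operator (an averaging argument using properness); this is routine but not optional. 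Second, and decisively, the step that actually uses admissibility — that $p=UU^*$ is Murray--von Neumann equivalent to $1$ in the multiplier algebra (or is full, combined with a stability argument), so that the corner is all of $C_r^*(\Gamma)\otimes\mathcal K$ — is precisely what you defer with ``I expect this verification \dots to be the main technical point,'' and it is nowhere carried out. Without it you have only an embedding of $C^*(X)^\Gamma$ as a corner, i.e.\ at best a Morita equivalence, not the asserted $*$-isomorphism; and when the action has nontrivial finite stabilizers the required multiplicity statement fails for general covariant modules — that is exactly why condition (3) of Definition \ref{d:ad_rep} is part of the hypothesis. So the heart of the proposition is missing.

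The proposed ``cleaner alternative'' does not avoid this gap, it relocates it. The claim that the equivariant coarse equivalence $g\mapsto gx_0$ induces an isomorphism $C^*(X)^\Gamma\cong C^*(\Gamma)^\Gamma$ requires an \emph{equivariant} covering isometry between the underlying modules, and constructing one near points with nontrivial stabilizer $F$ is exactly where the local $\ell^2(F)$-structure of condition (3) is used; ``functoriality of the equivariant Roe algebra under equivariant coarse equivalence'' is not a module-independent statement without such an admissibility hypothesis, so invoking it without proof or a precise citation begs the question. (Also, $d_\Gamma(g,g'):=d_X(gx_0,g'x_0)$ is only a pseudometric when $x_0$ has nontrivial stabilizer — harmless in itself, but symptomatic of the stabilizer issues being glossed over.) The closing computation that a $\Gamma$-invariant, locally compact, finite-propagation operator on $\ell^2(\Gamma)\otimes H'$ is a finite sum $\sum_{h}R_h\otimes k_h$, giving $C^*(\Gamma)^\Gamma\cong C_r^*(\Gamma)\otimes\mathcal K(H')$, is correct; it is the reduction of $C^*(X)^\Gamma$ to that model which remains unproved.
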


%The following definition is due to Kasparov (\cite{Ka,Yu95-2}).
Let us briefly recall the definition of equivariant $K$-homology introduced by Kasparov~\cite{Ka,Yu95-2} for a covariant system $(C_0(X),\Gamma, \phi)$.
\begin{defn}
$K_i^\Gamma(X)=KK_i^\Gamma(C_0(X),\mathbb{C})$ for $i=0,1$.
\end{defn}

To be more precise, $K_i^\Gamma(X)=KK_i^\Gamma(C_0(X),\mathbb{C})$ ($i=0,1$) are generated by certain cycles module a certain equivalent relation:

\begin{enumerate}
\item[(1)] a cycle for $K_0^\Gamma(X)$ is a triple $(H,\phi,F)$, where $\phi$ is the covariant representation from $C_0(X)$ to $B(H)$ and $F$ is a $\Gamma$-invariant bounded linear operator acting on $H$ such that $\phi(f) F-F\phi(f)$, $\phi(f)(FF^*-I)$ and $\phi(f)(F^*F-I)$ are in $K(H)$ for all $f\in C_{0}(X), \gamma\in \Gamma$;
\item [(2)] a cycle for $K_1^\Gamma(X)$ is a triple $(H,\phi,F)$, where $F$ is a $\Gamma$-invariant and self-adjoint operator acting on $H$ such that $\phi(f)(F^2-I)$, and $\phi(f) F-F\phi(f)$ are compact for all $f\in C_0(X), \gamma\in \Gamma$.
\end{enumerate}
In both cases, the equivalence relation on cycles is given by homotopy of operator $F$.

We remark that every class in $K^{\Gamma}_*(X)$ is equivalent to a cycle $(H,\phi,F)$ such that $(C_0(X),\Gamma,\phi)$ is an admissible covariant system~\cite{KY}. This can be seen as follows. We define a new $K$-homology group $\tilde{K}_*^{\Gamma}(X)$ by using cycles such that $(C_0(X),\Gamma,\phi)$ is an admissible covariant system. By the proof of~\cite[Proposition 5.5]{KY03} by Kasparov and Skandalis, we can show that there exists a $\Gamma$-Hilbert space $H_X$ satisfying the conditions of Definition~\ref{d:ad_rep} such that $H\oplus (H_X\hat{\otimes}l^2(\Gamma))$ is isomorphic to $H_X\hat{\otimes}l^2(\Gamma)$ as $\Gamma$-Hilbert space. Using this stabilization result, we can prove that the nature homomorphism from $\tilde{K}_*^{\Gamma}(X)$ to $K_*^{\Gamma}(X)$ is an isomorphism.

For any cycle $(H,\phi, F)$ in $K_0^\Gamma(X)$, let $\{U_i\}_{i\in I}$ be a locally finite, $\Gamma$-equivariant and uniformly bounded open cover of $X$. Let $\{\psi_i\}_{i\in I}$ be a $\Gamma$-equivariant partition of unity subordinate to $\{U_i\}_{i\in I}$. Define $$F'=\sum\limits_{i\in I}\phi(\sqrt{\psi_i})F\phi(\sqrt{\psi_i}),$$
where the infinite sum converges in strong topology. There is no difficulty to check that $(H,\phi, F')$ is equivalent to $(H, \phi, F)$ in $K^\Gamma_0(X)$. Note that $F'$ has finite propagation so that $F'$ is a multiplier of $C^*(X)^\Gamma$ and is invertible modulo $C^*(X)^\Gamma$. Hence $F'$ gives rise to an element in $K_0(C^*(X)^\Gamma)$, denoted by $\partial([F'])$. So we define the equivariant index map $Ind_\Gamma$ from $K^\Gamma_0(X)$ to $K_0(C^*(X)^\Gamma)$.  Similarly, we can define the equivariant index map from $K^\Gamma_1(X)$ to $K_1(C^*(X)^\Gamma)$.

\begin{defn}
Let $X$ be a discrete $\Gamma$-space with bounded geometry. For any $d>0$, the Rips complex $P_d(X)$ is the simplicial polyhedron whose set of vertices equals to $X$ and where a finite subset $Y\subseteq X$ spans a simplex if and only if the distance between any two points in $Y$ is less than or equal to $d$.
\end{defn}
For each $d>0$, the proper and isometrical action on $X$ induces a proper and isometrical action on $P_d(X)$ by $\gamma \cdot \sum\limits_{i=1}^k c_i x_i=\sum\limits_{i=1}^k c_i( \gamma\cdot x_i)$ for $x_i\in X, i=1,2,\cdots, k$ and $\sum\limits_{i=1}^k c_i=1$. Endow $P_d(X)$ with the spherical metric. Recall that on each path connected component of $P_d(X)$, the spherical metric is the maximal metric whose restriction to each simplex $\{\sum\limits_{i=0}^n t_i \gamma_i:t_i\geq 0, \sum\limits_{i=0}^n t_i =1\}$ is the metric obtained by identifying the simplex with
$$S_+^n:=\Big\{(s_0,s_1,\cdots, s_n)\in \mathbb{R}^{n+1}:s_i\geq 0, \sum\limits_{i=0}^n s_i^2 =1\Big\}$$
via the map
$$\sum\limits_{i=0}^n t_i \gamma_i\mapsto \Big(\frac{t_0}{\sqrt{\sum_{i=0}^n t_i^2}}, \frac{t_1}{\sqrt{\sum_{i=0}^n t_i^2}},\cdots,\frac{t_n}{\sqrt{\sum_{i=0}^n t_i^2}}\Big),$$
where $S_+^n$ is endowed with the standard Riemannian metric. The distance of a pair of points in different connected components of $P_d(X)$ is defined to be infinity.

The following conjecture is called the equivariant coarse Novikov conjecture:
\begin{conj}
If $X$ is a discrete metric space with bounded geometry and $\Gamma$ acts on $X$ properly and isometrically, then the equivariant index map $$Ind_\Gamma:\lim\limits_{d\rightarrow\infty}K_*^\Gamma(P_d(X))\rightarrow \lim\limits_{d\rightarrow\infty}K_*(C^*(P_d(X))^\Gamma)\cong K_*(C^*(X)^\Gamma)$$ is  injective.
\end{conj}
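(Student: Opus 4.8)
\emph{Remarks on a proof strategy.} In the stated generality this conjecture is open: the strongest general positive results --- Theorem~\ref{sec:1:thm:1.2} of this paper, together with the case $\Gamma$ trivial~\cite{Yu} and the case $X=\Gamma$~\cite{Skan,Hi} --- all impose some uniform coarse--geometric hypothesis, and for expander-type spaces the companion \emph{surjectivity} statement is known to fail~\cite{GongWangYu,OyonoYu}, so a proof of injectivity with no hypothesis on $X$ would require genuinely new input. Accordingly, the realistic plan is the one to be carried out below for Theorem~\ref{sec:1:thm:1.2}, and I indicate at the end what would be needed to push it to the full conjecture.

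\textbf{Step 1 (Localization).} Reformulate the problem via the equivariant localization algebra $C^*_L(P_d(X))^\Gamma$. The local index map identifies $\lim_d K_*^\Gamma(P_d(X))$ with $\lim_d K_*(C^*_L(P_d(X))^\Gamma)$, and under this identification $Ind_\Gamma$ becomes the map induced by evaluation at $0$, namely $e_*:\lim_d K_*(C^*_L(P_d(X))^\Gamma)\to\lim_d K_*(C^*(P_d(X))^\Gamma)$. Injectivity of $Ind_\Gamma$ is thus equivalent to injectivity of $e_*$; this reduction is general and uses only that the action is proper and isometric.

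\textbf{Step 2 (Twisting by coarse embeddings and Bott maps).} Fix coarse embeddings of $X/\Gamma$ and of $\Gamma$ into $H$. Out of the first embedding one builds twisted equivariant Roe and localization algebras whose fibres over finite-dimensional affine subspaces of $H$ are $\mathcal{K}$-stable after a Clifford-algebra ($\Cliff$) twist in the spirit of Atiyah--Bott--Shapiro; a Mayer--Vietoris / cutting argument over $H$ then shows the evaluation map is an \emph{isomorphism} on the twisted algebras (the equivariant analogue of Yu's computation in~\cite{Yu}). Out of both embeddings one builds a $\Gamma$-equivariant Bott map from the untwisted algebras to the twisted ones, compatible with evaluation; the coarse embedding of $\Gamma$ is what makes this map equivariant and controllable, and a Dirac--dual-Dirac pairing ("Bott element $\times$ index element $=1$", the Higson descent mechanism of~\cite{Skan,Hi}) shows it is injective on $K$-theory. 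Chasing the resulting commuting square forces $e_*$, hence $Ind_\Gamma$, to be injective --- this is Theorem~\ref{sec:1:thm:1.2}.

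\textbf{Step 3 (Obstacle and the general case).} To reach the conjecture one must drop both embeddability hypotheses. Weakening "$\Gamma$ coarsely embeds into $H$" to, say, "$\Gamma$ acts properly and affinely on a space of good enough geometry" should still permit a (Banach-space) Bott map, so the $\Gamma$-direction is the softer of the two. The genuine obstruction is the $X/\Gamma$ direction: the $\mathcal{K}$-stability of the twisted fibres in Step 2 relies on the Hilbert-space target in an essential way, and there is no known substitute when $X$ carries no uniform geometric structure --- indeed expander quotients show one cannot expect the twisted evaluation map to be an isomorphism in general. Without new ideas the most one could hope to salvage is injectivity of $Ind_\Gamma$ after localizing away from the part of $K$-theory detected by such obstructions, which is why I expect Theorem~\ref{sec:1:thm:1.2}, rather than the conjecture itself, to be what is actually established here.
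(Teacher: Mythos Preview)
You are right that the displayed statement is a \emph{conjecture} and is not proved in the paper; what is established is Theorem~\ref{sec:1:thm:1.2} under the two coarse-embeddability hypotheses, and your Step~3 assessment of the obstacles to the unrestricted case is reasonable.

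Your Steps~1--2 capture the Dirac--dual-Dirac philosophy but compress the argument into a single commuting square, and in doing so miss the paper's main structural device: the \emph{$\pi$-localization algebra} $C^*_{\pi,L}(P_d(X))^\Gamma$, in which propagation is required to tend to zero only in the quotient direction $X/\Gamma$ rather than in $X$ itself. The paper does \emph{not} show that the twisted evaluation from the ordinary localization algebra to the twisted Roe algebra is an isomorphism; the cutting argument over $H$ (Proposition~\ref{sec:3:thm:3.11}) only controls supports in $X/\Gamma$, so it yields the isomorphism only for the $\pi$-localization version. Consequently $e_*$ is factored as $e_* = (e_\pi)_* \circ \widetilde{\tau}_*$, where $(e_\pi)_*$ (evaluation from $C^*_{\pi,L}$) is shown injective in Theorem~\ref{sec:3:thm:3.12} using \emph{only} the embedding of $X/\Gamma$, and $\widetilde{\tau}_*$ is the inclusion $C^*_L\hookrightarrow C^*_{\pi,L}$. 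Injectivity of $\widetilde{\tau}_*$ is a separate problem and is where the embedding of $\Gamma$ enters: it furnishes a $\Gamma$-\emph{proper} coefficient algebra $\mathcal{A}(\mathcal{H},\eta)$, and with proper coefficients the comparison of $C^*_L$ and $C^*_{\pi,L}$ reduces (Theorem~\ref{sec:4:thm:4.8}, via a further twist by $\mathcal{A}(H,\xi)$ and the bounded-distortion hypothesis) to uniformly bounded, finite-subgroup pieces on which both agree; a Bott isomorphism (Theorem~\ref{sec:5:thm:5.1}) transports this back to untwisted coefficients. So the two embeddings are not fused into one Bott map as you describe; they play distinct roles in the two halves of the factorization, with the $\pi$-localization algebra as the hinge between them. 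Your sketch would work if the ordinary twisted localization-to-Roe evaluation were already an isomorphism, but that is precisely what the paper cannot establish directly in the noncocompact equivariant setting.
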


Now we shall recall the definition of the localization algebra $C_L^*(X)^\Gamma$ for a $\Gamma$-space $X$.
\begin{defn}
The {\it equivariant localization algebra} $C_L^*(X)^\Gamma$ is the norm-closure of the algebra of all bounded and uniformly norm-continuous functions $$f:[0,+\infty)\rightarrow C^*(X)^\Gamma$$ such that
$$\text{propagation}(f(t))\rightarrow 0\ \text{as}\ t\rightarrow\infty$$
with respect to the norm defined by $\|f\|=\sup\limits_{t\in [0,+\infty)}\|f(t)\|$.
\end{defn}

%There exists a local equivariant higher index map $$Ind_{\Gamma,L}:K_*^\Gamma(X)\rightarrow K_*(C_L^*(X)^\Gamma),$$ see Definition 3.1 in \cite{Yu97} or Definition 12 in \cite{Shan} for details.

Let us define the associated local $\Gamma$-index map. For each positive integer $n$, let $\{U_{n,i}\}_{j}$ be a locally finite and $\Gamma$-invariant open cover for $X$ such that diameter $(U_{n,i})< \frac{1}{n}$ for all $i$. Let $\{\phi_{n,i}\}_{j}$ be a continuous partition of unity subordinate to $\{U_{n,i}\}_{j}$. Let $[(H, \phi, F)]\in K_{0}^{\Gamma}(X)$. Define a family of operators $F(t)(t\in[0, \infty))$ acting on $H$ by
$$
F(t)=\sum_{i}\bigl((1-(t-n))\phi_{n,i}^{\frac{1}{2}}F\phi_{n,i}^{\frac{1}{2}}+(t-n)\phi_{n+1,i}^{\frac{1}{2}}F\phi_{n+1,i}^{\frac{1}{2}}\bigr)
$$
for all $t\in[n,n+1]$, where the infinite sum converges in strong topology. Notice that the propagation of $(F(t))\rightarrow 0$ as $t\rightarrow\infty.$
Using this it is no difficult to see that $F(t)$ is a multiplier of $C^*_L(X)^\Gamma$ and $F(t)$ is a unitary modulo of $C^*_L(X)^\Gamma$. Hence $F(t)$ gives rise to an element $\partial[F(t)]$ in $K_0(C^*_L(X)^\Gamma)$. So we  define the local index map $Ind_{\Gamma,L}$ from $K_0^\Gamma(X)$ to $K_0(C^*_L(X)^\Gamma)$. Similarly, we can define the local index map from $K_1^\Gamma(X)$ to $K_1(C^*_L(X)^\Gamma)$.

Now we have the following theorem which is an equivariant version of Theorem 3.2 in~\cite{Yu97} or  Theorem 3.4 in~\cite{Qiao}.

\begin{thm}\label{bcc}
If $X$ is a discrete metric space with bounded geometry and $\Gamma$
is a discrete group acting on $X$ properly and isometrically. Then for any $d>0$,  $$Ind_{\Gamma, L}:K_*^{\Gamma}(P_d(X))\rightarrow K_*(C_L^*(P_d(X))^{\Gamma})$$ is an isomorphism, where $P_d(X)$ is the Rips complex.
\end{thm}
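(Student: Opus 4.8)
The map $Ind_{\Gamma,L}$ is a natural transformation between the covariant functors $Y\mapsto K_*^\Gamma(Y)$ and $Y\mapsto K_*(C_L^*(Y)^\Gamma)$ defined on the category whose objects are locally finite, finite-dimensional simplicial complexes equipped with a proper, simplicial, isometric action of $\Gamma$ (so that all simplex stabilizers are finite subgroups of $\Gamma$) and whose morphisms are $\Gamma$-equivariant inclusions of subcomplexes. The plan is to run the standard ``cut-and-paste'' argument, an equivariant version of Yu's proof in~\cite{Yu97} (compare also~\cite{Qiao}): one shows that both functors are equivariantly homotopy invariant and satisfy a Mayer--Vietoris principle, that $Ind_{\Gamma,L}$ intertwines the two Mayer--Vietoris six-term sequences, and that $Ind_{\Gamma,L}$ is an isomorphism on a single point carrying an action of a finite group; an induction on dimension together with the five lemma then promotes this to arbitrary $Y$, in particular to $Y=P_d(X)$. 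Here $P_d(X)$ is automatically finite dimensional and locally finite by the bounded geometry of $X$; note, however, that the $\Gamma$-action on $P_d(X)$ need not be cocompact, so every estimate below must be uniform over the (possibly infinitely many) orbits of simplices.

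The two analytic ingredients are homotopy invariance and Mayer--Vietoris. A $\Gamma$-equivariant simplicial homotopy induces, after a bounded enlargement of propagation (which is harmless, since for localization algebras only $\mathrm{propagation}(f(t))\to 0$ is required), a homotopy of the induced $*$-homomorphisms of equivariant localization algebras, hence equality on $K$-theory; this is the equivariant analogue of the homotopy invariance of the localization algebra and is routine, and it implies in particular that every simplex is $\Gamma_\sigma$-equivariantly contractible onto its (fixed) barycenter. For Mayer--Vietoris: given $Y=Y_1\cup Y_2$ with $Y_1,Y_2$ closed $\Gamma$-invariant subcomplexes, one introduces the relative equivariant localization algebras $C_L^*(A\subseteq Y)^\Gamma$ (closures of the locally compact, $\Gamma$-invariant, finite-propagation operator fields supported in shrinking neighborhoods of a subcomplex $A$) and proves (i) the short exact sequence
\[
0\to C_L^*(Y_1\cap Y_2\subseteq Y)^\Gamma\to C_L^*(Y_1\subseteq Y)^\Gamma\oplus C_L^*(Y_2\subseteq Y)^\Gamma\to C_L^*(Y)^\Gamma\to 0,
\]
and (ii) the excision statement that the inclusion $C_L^*(A)^\Gamma\hookrightarrow C_L^*(A\subseteq Y)^\Gamma$ induces an isomorphism on $K$-theory --- the point being that the neighborhoods in which the operator fields are supported collapse onto $A$ as $t\to\infty$ because the propagation tends to $0$. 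Together these yield the Mayer--Vietoris six-term exact sequence for $K_*(C_L^*(-)^\Gamma)$; the corresponding sequence for equivariant $K$-homology is classical, and $Ind_{\Gamma,L}$ is compatible with both by naturality.

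With these in hand one inducts on $m=\dim Y$. The base case $m=0$: $Y$ is a discrete $\Gamma$-set, i.e.\ a disjoint union of orbits $\Gamma/F$; an equivariant induction isomorphism $K_*(C_L^*(\Gamma\times_F B)^\Gamma)\cong K_*(C_L^*(B)^F)$ --- compatible with the corresponding induction (Green--Julg) isomorphism in $K$-homology --- reduces this to the claim that $Ind_{F,L}$ is an isomorphism on a point with $F$-action, namely $K_*(C_L^*(\mathrm{pt})^F)\cong K_*^F(\mathrm{pt})$, which is a direct computation exactly as in the base case of~\cite{Yu97}. For the inductive step write $Y=Y^{(m)}=N\cup R$, where $N$ is a closed $\Gamma$-invariant neighborhood of the $(m-1)$-skeleton $Y^{(m-1)}$ which deformation retracts $\Gamma$-equivariantly onto $Y^{(m-1)}$, $R$ is the union over the $m$-simplices $\sigma$ of their ``cores'', copies of $\Delta^m$ sitting well inside $\sigma$, and $N\cap R$ is a $\Gamma$-invariant family of collars, one around each $\partial\sigma$, each $\Gamma_\sigma$-equivariantly homotopy equivalent to $\partial\Delta^m$. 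By the bounded geometry of $X$ and the uniform geometry of the spherical metric on $P_d(X)$ these choices can be made uniformly, so that the cores (and the collars) form a uniformly bounded, uniformly separated family; hence $R$ is $\Gamma$-equivariantly homotopy equivalent to a disjoint union of orbits $\Gamma/F$ (contract each core to its barycenter) and $N\cap R$ to a disjoint union of orbits $\Gamma\times_F(\partial\Delta^m)$. Applying the inductive hypothesis to these two spaces (dimension $\le m-1$, via homotopy invariance), to $N\simeq Y^{(m-1)}$, and to the above Mayer--Vietoris sequences, the five lemma gives that $Ind_{\Gamma,L}$ is an isomorphism on $Y$. The uniformity over the infinitely many cores causes no problem: by bounded geometry only finitely many pairs $(F,B)$ occur up to equivariant isomorphism.

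The main obstacle is the Mayer--Vietoris machinery of the second step: correctly defining the relative equivariant localization algebras $C_L^*(A\subseteq Y)^\Gamma$ on possibly non-cocompact complexes in terms of admissible covariant modules, establishing the short exact sequence together with the excision isomorphism $K_*(C_L^*(A)^\Gamma)\cong K_*(C_L^*(A\subseteq Y)^\Gamma)$, and proving the equivariant induction isomorphism for $\Gamma\times_F B$ --- all uniformly in the complex. This is the technical heart of the argument; once it is set up, the homotopy invariance, the dimension induction, and the finite-group point computation are direct adaptations of the non-equivariant arguments in~\cite{Yu97}.
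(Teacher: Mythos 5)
Your outline is essentially the intended argument: the paper gives no proof of Theorem~\ref{bcc}, merely citing it as the equivariant version of Theorem~3.2 of~\cite{Yu97} (or Theorem~3.4 of~\cite{Qiao}), and your equivariant cutting-and-pasting scheme --- strong Lipschitz homotopy invariance, Mayer--Vietoris for localization algebras, induction on the dimension of skeleta with uniform control over the infinitely many orbits of simplices, and the finite-group point case via Green--Julg/induction --- is exactly the adaptation those references suggest and matches the techniques the paper itself uses elsewhere (Propositions~\ref{sec:3:prop:3.4} and~\ref{sec:3;prop:3.5}, Theorems~\ref{sec:4:thm:4.8} and~\ref{sec:5:thm:5.1}). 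I see no gap beyond the technical points you yourself flag as the heart of the matter (admissible modules, relative algebras and excision, and the uniform product/Eilenberg swindle step needed to reduce an infinite disjoint union of orbits to a single point).
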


For any $d>0$, let $e$ be the evaluation map from $C_L^*(P_d(X))^{\Gamma}$ to $C^*(P_d(X))^{\Gamma}$ by $e(f)=f(0)$ for $f\in C_L^*(P_d(X))^{\Gamma}$,  we have the following commutative diagram:
$$
\xymatrix{
                &         \lim\limits_{d\rightarrow \infty}K_*(C_L^*(P_d(X))^{\Gamma}) \ar[d]^{e_*}     \\
  \lim\limits_{d\rightarrow \infty}K_*^{\Gamma}(P_d(X)) \ar[ur]^{Ind_{\Gamma, L\ \ \ \ }} \ar[r]^{Ind_{\Gamma\ \ \ \ }} & \lim\limits_{d\rightarrow \infty}K_*(C^*(P_d(X))^{\Gamma}).             }
$$
So in order to prove that $Ind_\Gamma$ is an isomorphism or an injectivity, it suffices to prove that $$e_*:\lim\limits_{d\rightarrow \infty}K_*(C_L^*(P_d(X))^{\Gamma})\rightarrow \lim\limits_{d\rightarrow \infty}K_*(C^*(P_d(X))^{\Gamma})$$ is an isomorphism or an injectivity.

\section{Equivariant $\pi$-localization algebra}

In this section, we shall define an equivariant $\pi$-localization algebra for a $\Gamma$-space $X$ with bounded geometry and we prove that if the quotient space $X/\Gamma$ admits a coarse embedding into a Hilbert space, then there is an injective homomorphism from the $K$-theory of the equivariant $\pi$-localization algebra to the $K$-theory of the equivariant Roe algebra for the $\Gamma$-space $X$.

Let $\Gamma$ be a countable discrete group, and let $X$ be a $\Gamma$-space with bounded geometry. Assume that $X/\Gamma$ is coarse embeddable into  Hilbert space $H$, and $\xi$ is the coarse embedding map from $X/\Gamma$ into $H$. Let $\pi:X\rightarrow X/\Gamma$ be the quotient map.

\begin{defn}
Let $\mathbb{C}_{\pi,L}^*(X)^\Gamma$ be the algebra of all bounded and uniformly continuous functions  $$f:[0,+\infty)\rightarrow C^*(X)^\Gamma$$ such that  $$\sup\{d(\pi(x),\pi(y)):(x,y)\in \supp(f(t))\}\rightarrow 0 \ \text{as}\  t\rightarrow \infty.$$ We define $C^*_{\pi,L}(X)^\Gamma$ to be the norm closure of $\mathbb{C}_{\pi,L}^*(X)^\Gamma$ with respect to the norm defined by $\|f\|=\sup\limits_{t\in [0,+\infty)}\|f(t)\|$.

\end{defn}

Now there is an evaluation map $e_\pi: C^*_{\pi,L}(X)^\Gamma\rightarrow C^*(X)^\Gamma$, which induces a homomorphism $(e_{\pi})_*:K_*(C^*_{\pi,L}(X)^\Gamma)\rightarrow K_*(C^*(X)^\Gamma)$. We will show that $$(e_{\pi})_*:\lim\limits_{d\rightarrow \infty}K_*(C_{\pi,L}^*(P_d(X))^{\Gamma})\rightarrow \lim\limits_{d\rightarrow \infty}K_*(C^*(P_d(X))^{\Gamma})$$ is injective. To do this, we need some preparations.

\begin{defn}
Let $X$ and $Y$ be two $\Gamma$-spaces, let $f,g:X\rightarrow Y$ be $\Gamma$-equivariant coarse maps. We say that $f$ and $g$ are $\pi$-$strong\ Lipschitz\ homotopic$ and denoted by $f\sim_\pi g$ if there is a map $F:X\times [0,1]\rightarrow Y$ such that
\begin{enumerate}
\item[(1)] $F(t,x)$ is an equivariant coarse map from $X$ to $Y$ for each $t$;
\item[(2)] $d(\pi(F(t,x)),\pi(F(t,y)))\leq Cd(\pi(x),\pi(y))$ for all $x,y\in X$ and $t\in [0,1]$, where $C$ is a constant;
\item[(3)] for any $\epsilon>0$, there exists $\delta>0$ such that $d(F(t_1,x),F(t_2,x))<\epsilon$ for all $x\in X$ if $|t_1-t_2|<\delta$;
\item[(4)] $F(0,x)=f(x), F(1,x)=g(x)$ for all $x\in X$.
\end{enumerate}

\begin{defn}
Let $X$ and $Y$ be two $\Gamma$-spaces. We say that $X$ and $Y$ are $\pi$-$strong\ Lipchitz\ homotopy\ equivalent$ if there exist $\Gamma$-equivariant coarse maps $f:X\rightarrow Y$ and $g:Y\rightarrow X$ such that $g\circ f\sim_\pi id_X$ and $f\circ g\sim_\pi id_Y$.

\end{defn}

\end{defn}
\begin{prop}\label{sec:3:prop:3.4}
Let $X$ and $Y$ be two $\Gamma$-spaces. If $X$ and $Y$ are $\pi$-strong Lipschitz homotopy equivalent, then $K_*(C^*_{\pi,L}(X)^\Gamma)$ is isomorphic to $K_*(C^*_{\pi,L}(Y)^\Gamma)$.
\end{prop}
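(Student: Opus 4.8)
The plan is to adapt, to the equivariant $\pi$-localization setting, the usual proof that strongly Lipschitz homotopic maps induce the same map on the $K$-theory of localization algebras (see~\cite{Yu97,Qiao}); the one new point is bookkeeping, namely that condition~(2) in the definition of a $\pi$-strong Lipschitz homotopy is exactly what keeps every operator that is produced inside the $\pi$-localization algebra. As usual, the proposition reduces to: (a) a $\Gamma$-equivariant coarse map $f\colon X\to Y$ which is $\pi$-Lipschitz, i.e.\ $d(\pi(f(x)),\pi(f(y)))\le C\,d(\pi(x),\pi(y))$ for some $C$ and all $x,y$, induces a homomorphism $f_*\colon K_*(C^*_{\pi,L}(X)^\Gamma)\to K_*(C^*_{\pi,L}(Y)^\Gamma)$ which is functorial under composition and sends the identity to the identity; and (b) if $f\sim_\pi g$, then $f_*=g_*$. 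Granting these, when $X$ and $Y$ are $\pi$-strong Lipschitz homotopy equivalent through $f\colon X\to Y$ and $g\colon Y\to X$ (which we may assume to be $\pi$-Lipschitz), we get $g_*\circ f_*=(g\circ f)_*=(\Id_X)_*=\Id$ and $f_*\circ g_*=(f\circ g)_*=(\Id_Y)_*=\Id$, so $f_*$ is an isomorphism.

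For~(a): since the covariant system is admissible, one chooses for each $t\in[0,\infty)$ a $\Gamma$-equivariant isometry $V_f(t)\colon H_X\to H_Y$ covering $f$, norm-continuous in $t$, whose fuzziness $\sup\{d(y,f(x)):(y,x)\in\supp(V_f(t))\}$ tends to $0$ as $t\to\infty$. Conjugation $T\mapsto V_f(t)\,T(t)\,V_f(t)^*$ preserves $\Gamma$-invariance, local compactness, finite propagation and (uniform) continuity in $t$, and — since the quotient map is $1$-Lipschitz and $f$ is $\pi$-Lipschitz with constant $C$ — it sends an operator of $\pi$-propagation $\delta$ to one of $\pi$-propagation at most $C\delta+2\cdot(\text{fuzziness})$. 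Hence it maps the dense $*$-subalgebra of $C^*_{\pi,L}(X)^\Gamma$ into $C^*_{\pi,L}(Y)^\Gamma$ and extends by continuity to a $*$-homomorphism; a standard rotation-and-stabilisation argument shows the induced $K$-theory map is independent of the choice of the family $\{V_f(t)\}$, and composing covering isometries gives functoriality.

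For~(b), the heart of the matter: let $F\colon X\times[0,1]\to Y$ be a $\pi$-strong Lipschitz homotopy from $f_0$ to $f_1$. Using condition~(3) — uniform-in-$x$ continuity of $F$ in the homotopy variable — together with admissibility, I would construct a two-parameter $\Gamma$-equivariant family of isometries $W(u,t)\colon H_X\to H_Y$ ($u\in[0,1]$, $t\in[0,\infty)$) such that $W(u,t)$ covers $F(u,\cdot)$ with fuzziness $\to 0$ as $t\to\infty$, $t\mapsto W(u,t)$ is norm-continuous, and $u\mapsto W(u,\cdot)$ is norm-continuous uniformly in $t$. Since condition~(2) bounds the $\pi$-Lipschitz constants of all the $F(u,\cdot)$ by a single $C$, the estimate in~(a) shows that $\Phi_u(T)(t)=W(u,t)\,T(t)\,W(u,t)^*$ defines, for each $u$, a $*$-homomorphism $\Phi_u\colon C^*_{\pi,L}(X)^\Gamma\to C^*_{\pi,L}(Y)^\Gamma$; the uniform-in-$t$ norm continuity of $u\mapsto W(u,\cdot)$ makes $u\mapsto\Phi_u$ a norm-continuous path of $*$-homomorphisms, while $\Phi_0$ and $\Phi_1$ represent $(f_0)_*$ and $(f_1)_*$. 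Homotopy invariance of $K$-theory then yields $(f_0)_*=(\Phi_0)_*=(\Phi_1)_*=(f_1)_*$, proving~(b).

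The main obstacle is precisely the construction of the family $\{W(u,t)\}$: producing isometries that cover the maps $F(u,\cdot)$, depend norm-continuously on $t$ with decaying fuzziness, depend norm-continuously on $u$ uniformly in $t$, and are compatible with the $\Gamma$-actions on $H_X$ and $H_Y$. This is where condition~(3) and the admissibility of the covariant systems are used in an essential way; it is a mild elaboration of the corresponding construction for ordinary equivariant localization algebras, the extra care being needed only to keep the $\pi$-propagation of $\Phi_u(T)(t)$ tending to $0$ for every $u$ at once.
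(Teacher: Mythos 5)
Your overall strategy is the same as the paper's: induce maps on $K_*(C^*_{\pi,L}(\cdot)^\Gamma)$ by conjugation with $\Gamma$-equivariant covering isometries (with fuzziness controlled by a scale $\varepsilon_k\to 0$), check that the $\pi$-Lipschitz condition (2) keeps the $\pi$-propagation small, and conclude by a two-out-of-three argument once homotopy invariance is in hand; the paper does exactly this, constructing $V_{g\circ f}$ and $V_{f\circ g}$ from equivariant Borel partitions $E_i=\Gamma\times_{F_i}K_i$ with $\mathrm{diam}(K_i)\le\varepsilon/2$ and appealing to the $\pi$-analogue of Proposition~3.7 of~\cite{Yu97} to see that $Ad_*(V_{g\circ f})$ and $Ad_*(V_{f\circ g})$ are the identity. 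Your step (a) and the independence-of-choices argument are fine.

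The genuine gap is in your step (b), which is the heart of the matter. You propose to realize the homotopy by a two-parameter family of equivariant covering isometries $W(u,t)$ that is norm-continuous in the homotopy parameter $u$ uniformly in $t$, so that $u\mapsto\Phi_u=\mathrm{Ad}\,W(u,\cdot)$ is a norm-continuous path of $*$-homomorphisms and ordinary homotopy invariance of $K$-theory applies. Such a family does not exist in general: a covering isometry for $F(u,\cdot)$ is built from a partition of the target into small pieces $K_i$, sending $\chi_{F(u,\cdot)^{-1}(K_i)}H_X$ into $\chi_{K_i}H_Y$, and as $u$ varies points cross the boundaries of the sets $F(u,\cdot)^{-1}(K_i)$, so the isometries for nearby values of $u$ differ on the corresponding subspaces by operators of norm of order $1$, not by something small. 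Condition (3) of the definition controls the motion of the points $F(u,x)$, not the norm variation of the isometries, so it cannot produce the uniform norm continuity you need. This is precisely why the standard proof (Yu's Proposition 3.7, which the paper's proof invokes and which must be adapted to the $\pi$- and equivariant setting) does not use a continuous path of conjugations: it discretizes the homotopy parameter along partitions $0=t_{k,0}\le\cdots\le t_{k,i_k}=1$ adapted to the scale at time $t$, assembles the countably many covering isometries into a single isometry of infinite multiplicity, and then runs an Eilenberg-swindle argument to show that the two evaluation maps agree on $K$-theory. Without that mechanism (or an equivalent one) your step (b) does not go through, and the concluding remark that this is ``a mild elaboration'' of the usual construction misses the actual difficulty. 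A secondary point: the definition of $\pi$-strong Lipschitz homotopy equivalence only constrains the homotopies, not $f$ and $g$ themselves, so your reduction ``which we may assume to be $\pi$-Lipschitz'' needs justification; the paper sidesteps this by working directly with the compositions $g\circ f$ and $f\circ g$, whose $\pi$-control comes from condition (2) of the homotopy.
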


\begin{proof}
Let $F$ be the $\pi$-strong Lipschitz homotopy between $g\circ f$ and $id_X$ such that $F(0,x)=(g\circ f)(x)$ and $F(1,x)=x$.
Since $\Gamma$ acts on $X$ properly and isometrically, for any $\varepsilon>$0, we can find a sequence of disjoint Borel cover $\{E_i\}$ of $X$ such that for each $i$, (1) $E_i=\Gamma\times_{F_i}K_i$; (2) $K_i$ has non-empty interior; (3) the diameter of $K_i$ is no more than $\frac{\varepsilon}{2}$. Now we can define an equivariant isometry $V_{g\circ f}$ from $H_X$ to $H_X$ such that $V_{g\circ f}$ maps $\chi_{(g\circ f)^{-1}(K_i)}H_X$ to $\chi_{K_i}H_X$ with
\begin{enumerate}
  \item [(1)] $d((g\circ f)(x),y)$ is bounded for $(x,y)\in \supp(V_{g\circ f})$, this is because both $f$ and $g$ are coarse maps and the diameter of $K_i$ is no more than $\frac{\varepsilon}{2}$;
  \item [(2)] $d(\pi((g\circ f)(x)),\pi(y))\leq \varepsilon$ for $(x,y)\in \supp(V_{g\circ f})$.
\end{enumerate}

Let $\{\varepsilon_k\}_k$ be a sequence of positive numbers such that $\varepsilon_k\rightarrow 0$ as $k\rightarrow +\infty$. For each $k$, by using the above discussion, we can define an isometry $V_k$ such that $d((g\circ f)(x),y)$ is bounded for $(x,y)\in \supp(V_k)$ and $d(\pi((g\circ f)(x)),\pi(y))\leq \varepsilon_k$ for $(x,y)\in \supp(V_k)$.

The sequence of isometries $\{V_k\}_k$ induces a homomorphism $Ad_*(V_{g\circ f})$ from $K_*(C^*_{\pi,L}(X)^\Gamma)$ to $K_*(C^*_{\pi,L}(X)^\Gamma)$, and it is an identity homomorphism by using the $\pi$-strong Lipschitz homotopy equivalent between $X$ and $Y$(cf.~Proposition 3.7 in \cite{Yu97}). Similarly, we can define a homomorphism $Ad_*(V_{f\circ g})$ from $K_*(C^*_{\pi,L}(Y)^\Gamma)$ to $K_*(C^*_{\pi,L}(Y)^\Gamma)$ which is an identity homomorphism. So $K_*(C^*_{\pi,L}(X)^\Gamma)$ is isomorphic to $K_*(C^*_{\pi,L}(Y)^\Gamma)$.
\end{proof}

By using the $\pi$-strong Lipschitz homotopy invariance above, we have the following proposition which can be proved as Proposition 3.11 in~\cite{Yu97}.
\begin{prop}\label{sec:3;prop:3.5}
Let $X$ be a simplicial complex with proper and isometrical $\Gamma$ action
and endowed with the spherical metric. If $X_1$ and $X_2$ are two subcomplex of $X$ with the subspace metric, then we have the following six term exact sequence:
$$\xymatrix{
  K_0(A) \ar[d] \ar[r] & K_0(B_1)\oplus K_0(B_2) \ar[r] & K_0(C) \ar[d] \\
  K_1(C) \ar[r] &  K_1(B_1)\oplus K_1(B_2) \ar[r] & K_1(A),}$$
where $A=C_{\pi,L}^*(X_1\cap X_2)^\Gamma$, $C=C_{\pi,L}^*(X_1\cup X_2)^\Gamma$, $B_1=C_{\pi,L}^*(X_1)^\Gamma$, $B_2=C_{\pi,L}^*(X_2)^\Gamma$.
\end{prop}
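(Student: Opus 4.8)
The plan is to deduce this six-term exact sequence from a short exact sequence of $C^*$-algebras by an appropriate Mayer--Vietoris argument, exactly parallel to the proof of Proposition 3.11 in \cite{Yu97}, but carried out for the $\pi$-localization algebras and keeping track of the $\pi$-propagation condition rather than the ordinary propagation condition. First I would form, for each $d'>0$ (or rather for the ``$d'$-thickenings'' of $X_1$, $X_2$ inside $X$), the ideal description: inside $C=C^*_{\pi,L}(X_1\cup X_2)^\Gamma$ consider the subalgebras generated by functions $f$ with $\supp(f(t))$ eventually concentrated (in the $\pi$-metric) near $X_1$ or near $X_2$. The key point is that $C^*_{\pi,L}(X_1)^\Gamma+C^*_{\pi,L}(X_2)^\Gamma$ is (norm-) dense in $C^*_{\pi,L}(X_1\cup X_2)^\Gamma$, because any finite-propagation $\Gamma$-invariant locally compact operator supported near $X_1\cup X_2$ can be cut, via $\Gamma$-equivariant partitions of unity subordinate to a cover of $X_1\cup X_2$ by (a neighborhood of) $X_1$ and (a neighborhood of) $X_2$, into a sum of one piece supported near $X_1$ and one supported near $X_2$, at the cost of a controlled increase of propagation which is absorbed as $t\to\infty$; and that $C^*_{\pi,L}(X_1)^\Gamma\cap C^*_{\pi,L}(X_2)^\Gamma$ agrees with $A=C^*_{\pi,L}(X_1\cap X_2)^\Gamma$ up to $\pi$-strong Lipschitz homotopy equivalence, using that the intersection of the $d'$-neighborhoods of $X_1$ and $X_2$ in the spherical metric deformation retracts $\Gamma$-equivariantly onto $X_1\cap X_2$.

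The main technical ingredient is therefore the following splitting/approximation lemma, which I would state and prove first: with $B_1,B_2$ as in the statement, there is an isomorphism $K_*(B_1+B_2)\cong K_*(C)$ and an isomorphism $K_*(B_1\cap B_2)\cong K_*(A)$. The first of these is genuinely an equality $B_1+B_2=C$ once one checks the density assertion above; the second requires the $\pi$-strong Lipschitz homotopy invariance established in Proposition~\ref{sec:3:prop:3.4}, applied to the radial retraction of a neighborhood of $X_1\cap X_2$ in $X_1\cup X_2$ onto $X_1\cap X_2$. For this retraction one must verify that it satisfies condition (2) of the definition of $\pi$-strong Lipschitz homotopy, i.e. that it is Lipschitz in the $\pi$-metric; this holds because $\pi:X\to X/\Gamma$ is $1$-Lipschitz and the spherical-metric radial homotopy on the simplicial complex $X$ is (locally) Lipschitz with a uniform constant, and these properties are preserved after dividing by the $\Gamma$-action because the homotopy is $\Gamma$-equivariant.

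Once the two identifications are in place, I would invoke the standard Mayer--Vietoris six-term exact sequence in $K$-theory associated to the pullback square of $C^*$-algebras
$$
\xymatrix{
B_1\cap B_2 \ar[r] \ar[d] & B_1 \ar[d] \\
B_2 \ar[r] & B_1+B_2,
}
$$
(which is a pullback because $B_1\cap B_2$ is literally the intersection and $B_1+B_2$ the sum inside the ambient $C^*$-algebra), obtaining the six-term sequence with $K_*(B_1\cap B_2)$, $K_*(B_1)\oplus K_*(B_2)$, $K_*(B_1+B_2)$, and then substituting $K_*(A)$ for $K_*(B_1\cap B_2)$ and $K_*(C)$ for $K_*(B_1+B_2)$. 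The maps are the obvious ones induced by inclusions together with the boundary map; functoriality and naturality of the Mayer--Vietoris sequence guarantee the diagram has the stated form.

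The hard part will be the density/splitting step: verifying carefully that an arbitrary element of $\mathbb{C}^*_{\pi,L}(X_1\cup X_2)^\Gamma$ (a path of finite-propagation $\Gamma$-invariant locally compact operators whose $\pi$-propagation tends to $0$) can be written, after a small perturbation that vanishes in the limit, as a sum of two elements with supports $\pi$-close to $X_1$ and to $X_2$ respectively, uniformly in $t$ and compatibly with the decay condition. This requires choosing the $\Gamma$-equivariant cutoff functions with enough care that the commutators introduced by the cutting still have $\pi$-propagation going to $0$, and that $\Gamma$-invariance and local compactness are preserved; the bounded geometry of $X$ and the fact that $P_d(X)$ is a locally finite simplicial complex with uniformly bounded simplices make this possible, and the argument is the equivariant $\pi$-analogue of the corresponding step in \cite[Proposition~3.11]{Yu97}, so I would indicate the modifications rather than repeat the full construction.
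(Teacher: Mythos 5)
Your proposal takes essentially the same route as the paper, which proves this proposition exactly as Proposition 3.11 of \cite{Yu97}: cut elements of $C$ into pieces supported in shrinking neighborhoods of $X_1$ and $X_2$, identify the $K$-theory of these neighborhood (relative) subalgebras with that of $B_1$, $B_2$, $A$ using the $\pi$-strong Lipschitz homotopy invariance of Proposition \ref{sec:3:prop:3.4} together with the spherical-metric fact that points close to both $X_1$ and $X_2$ are close to $X_1\cap X_2$, and then apply the standard Mayer--Vietoris sequence for two ideals with dense sum. The only imprecision is that the literal equality $B_1+B_2=C$ you assert holds for the relative (neighborhood-supported) subalgebras rather than for $B_1,B_2$ as intrinsically defined --- for the intrinsic algebras it is $B_1\cap B_2=A$ that is literal, while the sum is not dense because of cross terms supported in $X_1\times X_2$ near the intersection --- but since your first paragraph sets up exactly those relative subalgebras, this is a formulation issue rather than a gap.
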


Now we recall an algebra
associated to an infinite-dimensional Euclidean space introduced by Higson, Kasparov and Trout
\cite{HKT}. Let $H$ be a real (countably infinite dimensional) Hilbert space.  Denote by $V_a$, $V_b$ etc., the finite dimensional affine subspaces of $H$. Let $V_a^0$
be the finite dimensional linear subspaces of $H$ consisting of differences of elements
of $V_a$. Let Clifford$(V_a^0)$ be the complexified Clifford algebra of $V_a^0$, and $\mathcal {C}(V_a)$ be the graded $C^*$-algebra of continuous functions vanishing at infinity from $V_a$ to
Clifford$(V_a^0)$. Let $\mathcal {S}$ be the $C^*$-algebra $C_0(\mathbb{R})$, graded according to
the odd and even functions. Define the graded tensor product $$\mathcal {A}(V_a)=\mathcal {S}\hat{\otimes}\mathcal {C}(V_a).$$
If $V_a\subseteq V_b$, we have a decomposition $V_b=V_{ba}^0+V_a$, where $V_{ba}^0$ is the orthogonal complement of $V_a^0$ in $V_b^0$. For each $v_b \in V_b$, we have a corresponding decomposition $v_b=v_{ba}+v_a$, where $v_{ba}\in V_{ba}^0$ and $v_a\in V_a$. Every function $h$ on $V_a$ can be extended to a function $\tilde{h}$ on $V_b$ by the formula $\tilde{h}(v_{ba}+v_a)=h(v_a)$.

\begin{defn}\label{clif-alg}
If $V_a\subseteq V_b$, we use $C_{ba}$  to denote the function $V_b\rightarrow \textrm{Clifford}(V_b^0)$, $v_b\mapsto v_{ba}$, where $v_{ba}$ is
considered as an element of $\textrm{Clifford}(V_b^0)$ via the inclusion $V_{ba}^0\subset \textrm{Clifford}(V_b^0)$.
Let $X$ be the unbounded multiplier of $\mathcal{S}$  with degree one given by the function $x\mapsto x$.
Define a $*$-homomorphism $\beta_{ba}:\mathcal {A}(V_a)\rightarrow \mathcal {A}(V_b)$ by the formula $$\beta_{ba}(g\hat{\otimes}h)=g(X\hat{\otimes}1+1\hat{\otimes}C_{ba})(1\hat{\otimes}\tilde{h}),$$for $g\in \mathcal {S}$ and $h\in \mathcal {C}(V_a)$, where $g(X\hat{\otimes}1+1\hat{\otimes}C_{ba})$ is defined by functional calculus.
\end{defn}

\begin{rem}
Let $g_0(x)=e^{-x^2}$ and $g_1(x)=xe^{-x^2}$. It is not difficult to check that $$g_0(X\hat{\otimes}1+1\hat{\otimes}C_{ba})=g_0(X)\hat{\otimes}g_0(C_{ba})$$ for $g_0=e^{-x^2}$ and $$g_1(X\hat{\otimes}1+1\hat{\otimes}C_{ba})=g_0(X)\hat{\otimes}g_1(C_{ba})+g_1(X)\hat{\otimes}g_0(C_{ba})$$ for $g_1(x)=xe^{-x^2}$.
Moreover $g_0(X)=g_0(x)$, $g_1(X)=g_1(x)$ as multiplication operators, and $(g_0(C_{ba})\tilde{h})(v_b)=g_0(\|v_{ba}\|)h(v_a)$, $(g_1(C_{ba})\tilde{h})(v_b)=v_{ba}g_0(\|v_{ba}\|)h(v_a)$ when we view $C_{ba}$ as unbounded multiplier on $\mathcal {C}(V_b)$.
\end{rem}

The maps in Definition \ref{clif-alg} make the collection $(\mathcal {A}(V_a))$ into a directed system as $V_a$ ranges over finite dimensional affine subspaces of $H$. Define the $C^*$-algebra $\mathcal {A}(H,\xi)$ by $$\mathcal {A}(H,\xi)=\lim\limits_{\rightarrow}\mathcal {A}(V_a).$$

Now, let $\mathbb{R}_+\times H$ be endowed with the weakest topology for which the projection to $H$ is weakly continuous and the function $(t,w)\mapsto t^2+\|w\|^2$ is continuous. This topology makes $\mathbb{R}_+\times H$ into a locally compact Hausdorff space. Note that for $v\in H$ and $r>0$, $B(v,r)=\{(t,w)\in \mathbb{R}_+\times H: t^2+\|v-w\|^2<r^2\}$ is an open subset of $\mathbb{R}_+\times H$.
For each finite dimensional $V_a\subseteq H$, $C_0(\mathbb{R}_+ \times V_a)$ is included in $\mathcal {A}(V_a)$ as its center. If $V_a\subseteq V_b$, then $\beta_{ba}$ takes $C_0(\mathbb{R}_+\times V_a)$ into $C_0(\mathbb{R}_+\times V_b)$. Then $C^*$-algebra $\lim\limits_{\rightarrow}C_0(\mathbb{R}_+\times V_a)$ is isomorphic to $C_0(\mathbb{R}_+\times H)$,
where the direct limit is over the directed set of all finite dimensional affine subspaces $V_a$ of $H$.

\begin{defn}\label{supp_a}
The {\it support} of an element $a\in \mathcal {A}(H,\xi)$ is the complement of all $(t,v)\in \mathbb{R}_+\times H$ such that there exists $g\in C_0(\mathbb{R}_+\times H)$ with $g(t,v)\neq 0$ and $g\cdot a=0$.

\end{defn}

For each $d>0$, the proper and isometrical action on $X$ induces a proper and isometrical action on $P_d(X)$ by $\gamma \cdot \sum\limits_{i=1}^k c_i x_i=\sum\limits_{i=1}^k c_i( \gamma\cdot x_i)$ for $x_i\in X, i=1,2,\cdots, k$ and $\sum\limits_{i=1}^k c_i=1$. Then the coarse embedding $\xi:X/\Gamma\rightarrow H$ induces a coarse embedding $\xi:P_d(X)/\Gamma\rightarrow H$ by $\xi\Big(\pi(\sum\limits_{i=1}^k c_i x_i)\Big)=\sum\limits_{i=1}^k c_i \xi(\pi(x_i))$.
Take any element $\pi(x)\in P_d(X)/\Gamma$, we define
$$W_k(\pi(x))=\xi(\pi(x))+\,\mathrm{span}\{\xi(\pi(y))-\xi(\pi(x)) : y\in P_d(X)/\Gamma, d(\pi(x),\pi(y))\leq k^2\},$$
which is a finite dimensional affine subspace of $H$ by using the bounded geometry of $X$. We can assume that $\bigcup\limits_{k\geq 1} W_k(\pi(x))$ is dense in $H$, otherwise we can take $H$ to be the norm closure of $\bigcup\limits_{k\geq 1} W_k(\pi(x))$.
For $k<k'$, let $\beta_{k',k}(\pi(x)):\mathcal {A}(W_k(\pi(x)))\rightarrow \mathcal {A}(W_{k'}(\pi(x)))$ be defined as in Definition \ref{clif-alg} and $\beta_k(\pi(x)):\mathcal {A}(W_k(\pi(x))\rightarrow \mathcal {A}(H,\xi)$ coming from the definition of $\mathcal {A}(H,\xi)$. We write $\beta(\pi(x))$ for $$\beta_0(\pi(x)):\mathcal {S}\cong \mathcal {A}(W_0(\pi(x)))\rightarrow \mathcal {A}(H,\xi).$$

Choose a countable $\Gamma$-invariant dense subset $X_d$ of $P_d(X)$ for each $d>0$ such that $X_{d_1}\subseteq X_{d_2}$ if $d_1\leq d_2$. Let $C_{alg}^*(P_d(X), \mathcal {A}(H,\xi))^\Gamma$ be the set of all functions $T$ on $X_d\times X_d$ such that
\begin{enumerate}
\item[(1)] there exists an integer $N$ such that $$T(x,y)\in (\beta_N(\pi(x))\hat{\otimes}1)(\mathcal {A}(W_N(\pi(x)))\hat{\otimes}K)\subseteq \mathcal {A}(H,\xi)\hat{\otimes}K$$ for all $x,y\in X_d$, where $\beta_N(\pi(x)):\mathcal {A}(W_N(\pi(x)))\rightarrow \mathcal {A}(H,\xi)$ is the $*$-homomorphism associated to the inclusion of $W_N(\pi(x))$ into $H$, and $K$ is the algebra of compact operators;
\item[(2)] there exists $M\geq 0$ such that $\|T(x,y)\|\leq M$ for all $x,y\in X_d$;
\item[(3)] there exists $L>0$ such that for each $y\in X_d$, $$\sharp\{x:T(x,y)\neq 0\}\leq L,\qquad \sharp\{x:T(y,x)\neq 0\}\leq L;$$
\item[(4)] there exists $r_1>0$  such that if $d(x,y)>r_1$, then $T(x,y)=0$;
\item[(5)] there exists $r_2>0$ such that
\begin{align*}
\supp(T(x,y))&\subseteq B_{\mathbb{R}_+\times H }(\xi(\pi(x)),r_2)\\
&:=\left\{(s,h)\in\mathbb{R}_+\times H:s^2+\|h-\xi(\pi(x))\|^2<r_2^2\right\}
\end{align*}
for $x,y\in X_d$;
\item[(6)] $\gamma(T)=T$, where $\gamma(T)(x,y)=T(\gamma^{-1}x,\gamma^{-1}y)$.
\end{enumerate}

If we define a product structure on $C^*_{alg}(P_d(X),\mathcal {A}(H,\xi))^\Gamma$ by $$(T_1 T_2)(x,y)=\sum\limits_{z\in X_d}T_1(x,z)T_2(z,y),$$
then $C^*_{alg}(P_d(X),\mathcal {A}(H,\xi))^\Gamma$ is made into a $*$-algebra via matrix multiplication together with the $*$-operation on $\mathcal{A}(H,\xi)\hat{\otimes}K$.
Let $$E=\left\{\sum\limits_{x\in X_d}a_x[x]:a_x\in \mathcal {A}(H,\xi)\hat{\otimes} K,\sum\limits_{x\in X_d}a_x^*a_x\ \text{converges in norm}\right\}.$$
We know that $E$ is a Hilbert module over $\mathcal {A}(H,\xi)\hat{\otimes} K$ with
$$\left\langle\sum\limits_{x\in X_d}a_x[x],\sum\limits_{x\in X_d}b_x[x]\right\rangle=\sum\limits_{x\in X_d}a_x^*b_x,$$
$$\left(\sum\limits_{x\in X_d}a_x[x]\right)a=\sum\limits_{x\in X_d}a_xa[x]$$
for all $a\in \mathcal {A}(H,\xi)\hat{\otimes} K$ and $\sum\limits_{x\in X_d}a_x[x]\in E$.

$C^*_{alg}(P_d(X),\mathcal {A}(H,\xi))^{\Gamma}$ acts on $E$ by
$$T\left(\sum\limits_{x\in X_d}a_x[x]\right)=\sum\limits_{y\in X_d}\left(\sum\limits_{x\in X_d}T(y,x)a_x\right)[y],$$
where $T\in C^*_{alg}(P_d(X),\mathcal {A}(H,\xi))^{\Gamma}$ and $\sum\limits_{x\in X_d}a_x[x]\in E$.
\begin{defn}
The {\it twisted equivariant Roe algebra} $C^*(P_d(X),\mathcal {A}(H,\xi))^{\Gamma}$ is defined to be the operator norm closure of $C^*_{alg}(P_d(X),\mathcal {A}(H,\xi))^{\Gamma}$ in $\mathcal {B}(E)$, the $C^*$-algebra of all module homomorphisms from $E$ to $E$ for which there is an adjoint module homomorphism.
\end{defn}

Let $C_{\pi,L,alg}^*(P_d(X),\mathcal {A}(H,\xi))^{\Gamma}$ be the set of all bounded, uniformly norm continuous functions
$$g:\mathbb{R}_+\rightarrow C_{alg}^*(P_d(X),\mathcal {A}(H,\xi))^\Gamma$$
such that:
\begin{enumerate}
  \item[(1)] there exists $N$ such that $g(t)(x,y)\in (\beta_N(\pi(x))\hat{\otimes}1)(\mathcal {A}(W_N(\pi(x)))\hat{\otimes}K)\subseteq \mathcal {A}(H,\xi)\hat{\otimes}K$ for all $t\in \mathbb{R}_+$, $x,y\in X_d$;
  \item[(2)] $\sup\{d(\pi(x),\pi(y)):g(t)(x,y)\neq 0\}\rightarrow 0 \ \text{as}\  t\rightarrow \infty$;
  \item[(3)] there exists $R>0$ such that $\supp(g(t)(x,y))\subseteq B_{\mathbb{R}_+\times H}(\xi(\pi(x)),R)$ for all $t\in \mathbb{R}_+, x,y\in X_d$;
  \item[(4)] there exists $L>0$ such that for any $y\in P_d(X)$, $\sharp\{x:g(t)(x,y)\neq 0\}<L$, $\sharp\{x:g(t)(y,x)\neq 0\}<L$ for any $t\in \mathbb{R}_+$;
\end{enumerate}

\begin{defn}\label{local-alg}
The {\it twisted equivariant $\pi$-localization algebra} $C_{\pi,L}^*(P_d(X),\mathcal {A}(H,\xi))^{\Gamma}$ is the completion of $C_{\pi,L,alg}^*(P_d(X),\mathcal {A}(H,\xi))^{\Gamma}$ with respect to the norm
$$\|g\|=\sup\limits_{t\in \mathbb{R}_+}\|g(t)\|_{C^*(P_d(X),\mathcal {A}(H,\xi))^\Gamma}.$$
\end{defn}

With the similar way, we can define the twisted equivariant localization algebra $C_{L}^*(P_d(X),\mathcal {A}(H,\xi))^{\Gamma}$ only by changing $\sup\{d(\pi(x),\pi(y)):g(t)(x,y)\neq 0\}\rightarrow 0 \ \text{as}\  t\rightarrow \infty$ to $\sup\{d(x,y):g(t)(x,y)\neq 0\}\rightarrow 0 \ \text{as}\  t\rightarrow \infty$.

We have an evaluation homomorphism
$$e'_\pi:C_{\pi,L}^*(P_d(X),\mathcal {A}(H,\xi))^{\Gamma}\rightarrow C^*(P_d(X),\mathcal {A}(H,\xi))^{\Gamma}$$ defined by $e'_\pi(g)=g(0)$. This map induces a homomorphism at $K$-theory level:
$$(e'_\pi)_*:\lim\limits_{d\rightarrow \infty}K_*(C_{\pi,L}^*(P_d(X),\mathcal {A}(H,\xi))^{\Gamma})\rightarrow \lim\limits_{d\rightarrow \infty}K_*(C^*(P_d(X),\mathcal {A}(H,\xi))^{\Gamma}).$$
We will show that $(e'_\pi)_*$ is an isomorphism. To prove this, we need the following definitions.

\begin{defn}
(1) The {\it support} of an element $T$ in $C_{alg}^*(P_d(X),\mathcal {A}(H,\xi))^{\Gamma}$ is defined to be
$$\left\{(x,y,t,v)\in X_d\times X_d\times \mathbb{R}_+\times H:(t,v)\in \supp(T(x,y))\right\}.$$

(2) The {\it support} of an element $g$ in $C_{L,alg}^*(P_d(X)_,\mathcal {A}(H,\xi))^{\Gamma}$ is defined to be
$$\bigcup\limits_{t\in \mathbb{R}_+}\supp(g(t)).$$

(3)  The {\it support} of an element $g$ in $C_{\pi,L,alg}^*(P_d(X)_,\mathcal {A}(H,\xi))^{\Gamma}$ is defined to be
$$\bigcup\limits_{t\in \mathbb{R}_+}\supp(g(t)).$$

\end{defn}

Let $O$ be an open subset of $\mathbb{R}_+\times H$. Define $C_{alg}^*(P_d(X),\mathcal {A}(H,\xi))^{\Gamma}_O$ to be the subalgebra of $C_{alg}^*(P_d(X),\mathcal {A}(H,\xi))^{\Gamma}$ consisting of all elements whose supports are contained in $X_d\times X_d\times O$, i.e.,
\begin{align*}
&C_{alg}^*(P_d(X),\mathcal {A}(H,\xi))^{\Gamma}_O\\
&\hspace{30mm} =\left\{T\in C_{alg}^*(P_d(X),\mathcal {A}(H,\xi))^{\Gamma}:\supp(T(x,y))\subseteq O, \forall x,y\in X_d\right\}.
\end{align*}
Define $C^*(P_d(X),\mathcal {A}(H,\xi))^{\Gamma}_O$ to be the norm closure of $C_{alg}^*(P_d(X),\mathcal {A}(H,\xi))^{\Gamma}_O$. Similarly, let
\begin{align*}
&C_{L,alg}^*(P_d(X),\mathcal {A}(H,\xi))^{\Gamma}_O\\
&\hspace{30mm} =\left\{g\in C_{L,alg}^*(P_d(X),\mathcal {A}(H,\xi))^{\Gamma}:\supp(g)\subseteq X_d\times X_d\times O\right\},
\end{align*}
\begin{align*}
&C_{\pi,L,alg}^*(P_d(X),\mathcal {A}(H,\xi))^{\Gamma}_O\\
&\hspace{30mm} =\left\{g\in C_{\pi,L,alg}^*(P_d(X),\mathcal {A}(H,\xi))^{\Gamma}:\supp(g)\subseteq X_d\times X_d\times O\right\}.
\end{align*}
We define that
$C_{L}^*(P_d(X),\mathcal {A}(H,\xi))^{\Gamma}_O$ and
$C_{\pi,L}^*(P_d(X),\mathcal {A}(H,\xi))^{\Gamma}_O$ are the norm closures of $C_{L,alg}^*(P_d(X),\mathcal {A}(H,\xi))^{\Gamma}_O$ and
$C_{\pi,L,alg}^*(P_d(X),\mathcal {A}(H,\xi))^{\Gamma}_O$ respectively.

\begin{prop}\label{sec:3:thm:3.11}
Let $\Gamma$ be a countable discrete group, $X$ a $\Gamma$-space, $H$ a real Hilbert space. If $X/\Gamma$ admits an coarse embedding into $H$, then the homomorphism
$$(e'_\pi)_*:\lim\limits_{d\rightarrow \infty}K_*(C_{\pi,L}^*(P_d(X),\mathcal {A}(H,\xi))^{\Gamma})\rightarrow \lim\limits_{d\rightarrow \infty}K_*(C^*(P_d(X),\mathcal {A}(H,\xi))^{\Gamma})$$ is an isomorphism.
\end{prop}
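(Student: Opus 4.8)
The plan is to prove Proposition~\ref{sec:3:thm:3.11} by a Mayer--Vietoris and cutting-and-pasting argument that reduces the general statement to a local, ``zero-dimensional'' situation where the twisted $\pi$-localization algebra and the twisted Roe algebra can be compared directly. The strategy follows the scheme of~\cite[Section 6]{Yu}, adapted to the equivariant setting and to the $\pi$-localization machinery set up in this section. Throughout, one works with the supports inside $\mathbb{R}_+\times H$ and exploits that the coarse embedding $\xi$ of $X/\Gamma$ controls the $H$-component of the support of every element, so that all the relevant algebras decompose over small neighbourhoods in $H$.

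\textbf{Step 1: Reduction to the case of a single coordinate/bounded open subset.} For an open subset $O\subseteq \mathbb{R}_+\times H$, consider the evaluation map $(e'_\pi)_*$ between $\lim_d K_*(C_{\pi,L}^*(P_d(X),\mathcal{A}(H,\xi))^\Gamma_O)$ and $\lim_d K_*(C^*(P_d(X),\mathcal{A}(H,\xi))^\Gamma_O)$. First I would establish that if $O=\bigcup_n O_n$ with the $O_n$ an increasing union of open subsets, then the $K$-theory of the algebra supported in $O$ is the direct limit of the $K$-theories of the algebras supported in $O_n$; this is a standard continuity-of-$K$-theory statement for the inductive limit of $C^*$-algebras, and it holds for all three families (Roe, $L$, $\pi$-$L$). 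Using this, together with the Mayer--Vietoris sequence of Proposition~\ref{sec:3;prop:3.5} (and its twisted analogue, which is proved the same way from the ideal/quotient structure of these algebras), one runs a five-lemma argument: if $(e'_\pi)_*$ is an isomorphism for $O_1$, $O_2$ and $O_1\cap O_2$ then it is an isomorphism for $O_1\cup O_2$. A partition-of-unity argument on $H$ then lets one cover $\mathbb{R}_+\times H$ by a controlled family of ``strips'' $O_\beta$ indexed by a net over finite-dimensional subspaces, and further decompose each strip into disjoint balls of a fixed small radius $\delta$; the disjointness means the algebra supported on such a union is the $\ell^\infty$-type direct sum of the pieces, reducing everything to a single ball $B_{\mathbb{R}_+\times H}(\xi(\pi(x_0)),\delta)$ of small radius.

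\textbf{Step 2: The local computation.} On a ball of small radius around a point $\xi(\pi(x_0))$, the constraint (5)/(3) on the $H$-component of the support forces the condition $d(\pi(x),\pi(y))$ to be small whenever $T(x,y)\ne 0$, because $\xi$ satisfies the lower bound $\rho_-(d(\pi(x),\pi(y)))\le \|\xi(\pi(x))-\xi(\pi(y))\|$ from Definition~\ref{sec:1:def:1} and both $\xi(\pi(x))$ and $\xi(\pi(y))$ must lie within $2\delta$ of $\xi(\pi(x_0))$. Hence on such a ball the \emph{Roe algebra} and the \emph{$\pi$-localization algebra} coincide up to a ``Rips complex shift'': after passing to the direct limit over $d$, $C^*(P_d(X),\mathcal{A}(H,\xi))^\Gamma_O$ already behaves like a localization algebra in the $\pi$-direction because elements have uniformly small $\pi$-propagation. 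More precisely, one shows that for $O$ a disjoint union of small balls, the evaluation map $e'_\pi$ is, at the level of $\lim_d K_*$, an isomorphism: injectivity and surjectivity both follow from an Eilenberg-swindle / reparametrisation-in-$t$ argument exactly as in the untwisted localization-algebra comparison (e.g.\ the proof that $\lim_d K_*(C_L^*(P_d(X)))\to \lim_d K_*(C^*(P_d(X)))$ is split once propagation is already small), where the key point is that one can homotope the time parameter to collapse $[0,\infty)$ without leaving the algebra. The equivariance is preserved because all the isometries $V_k$ and homotopies used are $\Gamma$-equivariant, and the admissibility of the covariant system (Definition~\ref{d:ad_rep}) guarantees enough room to perform the swindle equivariantly.

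\textbf{Step 3: Assembling.} Combining Steps 1 and 2 via the five-lemma up the tower of open sets $O_n\nearrow \mathbb{R}_+\times H$ gives that $(e'_\pi)_*$ is an isomorphism for $O=\mathbb{R}_+\times H$, which is exactly the claim, since $C^*(P_d(X),\mathcal{A}(H,\xi))^\Gamma_{\mathbb{R}_+\times H}=C^*(P_d(X),\mathcal{A}(H,\xi))^\Gamma$ and similarly for the $\pi$-localization algebra. \textbf{The main obstacle} I anticipate is the bookkeeping in Step 1: making the decomposition of $\mathbb{R}_+\times H$ into small balls compatible with the finite-dimensionality constraint (1) (every $T(x,y)$ lives in $\mathcal{A}(W_N(\pi(x)))$ for a uniform $N$), with $\Gamma$-invariance (condition (6)), and with the fact that the subspaces $W_k(\pi(x))$ genuinely depend on $x$ — so the ``balls'' are not literally translates of one another and one must argue that, after intersecting with the relevant finite-dimensional affine subspaces, the pieces are still uniformly small and the direct-sum decomposition still holds. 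Verifying that the Mayer--Vietoris sequence and the continuity of $K$-theory hold for the twisted $\pi$-localization algebras (not merely stated by analogy with Proposition~\ref{sec:3;prop:3.5}) is the other technical point that needs care, but it is routine given the ideal structure of these algebras.
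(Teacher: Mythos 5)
Your overall flavor (cut along the embedding, Mayer--Vietoris, Eilenberg swindle) is the right one, but both of your key steps deviate from what actually works, and Step 2 contains a genuine error. First, the decomposition: you cannot reduce to balls of a \emph{fixed small} radius $\delta$, and there is no partition of unity on $H$ or ``strips over finite-dimensional subspaces'' available. Conditions (5)/(3) force every element to be supported in $O_r=\bigcup_{\pi(x)\in X/\Gamma}B_{\mathbb{R}_+\times H}(\xi(\pi(x)),r)$ for some $r$, so the twisted algebras are exhausted by the $O_r$-supported subalgebras with $r\to\infty$; the radius is the parameter that must grow, not one you may shrink. For a fixed $r_0$ the correct cutting is along the \emph{discrete} family of centers $\xi(\pi(x))$: bounded geometry of $X/\Gamma$ gives a partition of $X/\Gamma$ into finitely many subsets $\widetilde X_k$, and the lower bound $\rho_-$ of the coarse embedding (this is precisely where the hypothesis enters) guarantees that within one $\widetilde X_k$ the centers satisfy $\|\xi(\pi(x_i))-\xi(\pi(x_j))\|>2r_0$; a finite Mayer--Vietoris argument over $k$ then reduces everything to $O=\bigsqcup_{j}B(\xi(\pi(x_j)),r)$ with $2r_0$-separated centers. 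None of this coloring argument is in your sketch, and your proposed small-ball cover of $\mathbb{R}_+\times H$ would not exhaust the algebras.

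Second, your local computation fails: restricting supports to a small ball around $\xi(\pi(x_0))$ does \emph{not} force $d(\pi(x),\pi(y))$ to be small. Condition (5) ties $\supp(T(x,y))$ to $B(\xi(\pi(x)),r_2)$ with $r_2$ depending on $T$ and arbitrarily large, so you only conclude that $\pi(x)$ stays in a \emph{bounded}, element-dependent neighbourhood of $\pi(x_0)$ and that the $\pi$-propagation is bounded (by $r_1$), not tending to $0$; hence the restricted twisted Roe algebra is not ``already a $\pi$-localization algebra'', and the reparametrisation/swindle you invoke does not apply to it. The actual local step is different: the boundedness just described means that over each $O_j$ only a cocompact $\Gamma$-invariant piece of $P_d(X)$ contributes, and since $\Gamma$ acts trivially on $\mathcal{A}(H,\xi)$ one obtains
$$C^*(P_d(X),\mathcal{A}(H,\xi))^{\Gamma}_O\;\cong\;\prod_{j}C^*_r(\Gamma)\otimes\mathcal{A}(H,\xi)_{O_j}\otimes K(H),$$
the equivariant analogue of Yu's Lemma 6.4, resting on the Morita equivalence of Proposition~\ref{morita-roe}. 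On the localized side, $\pi$-strong Lipschitz homotopy invariance (Proposition~\ref{sec:3:prop:3.4}) identifies $\lim_{d}K_*(C^*_{\pi,L}(P_d(X),\mathcal{A}(H,\xi))^{\Gamma}_O)$ with $K_*(A^*_L)$, the algebra of paths in the same product, and the Eilenberg swindle kills the ideal of paths vanishing at $0$, so both sides compute $K_*\bigl(\prod_j C^*_r(\Gamma)\otimes\mathcal{A}(H,\xi)_{O_j}\otimes K(H)\bigr)$ and $(e'_\pi)_*$ matches them. The appearance of $C^*_r(\Gamma)$ through cocompactness, and the use of $\pi$-homotopy invariance rather than an identification of the two algebras, are the heart of the proof and are missing from your proposal; by contrast, the finite-dimensionality condition (1) that worries you plays no role in the decomposition.
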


\begin{proof}
For any $r>0$, we define $O_r\subseteq \mathbb{R}_+\times H$ by $$O_r=\bigcup\limits_{\pi(x)\in X/\Gamma}B(\xi(\pi(x)),r),$$
where $\xi$ is the coarse embedding from $X/\Gamma$ to $H$.
We have $$C^*_{\pi,L}(P_d(X),\mathcal {A}(H,\xi))^\Gamma=\lim\limits_{r\rightarrow \infty}C^*_{\pi,L}(P_d(X),\mathcal {A}(H,\xi))_{O_r}^\Gamma,$$
$$C^*(P_d(X),\mathcal {A}(H,\xi))^\Gamma=\lim\limits_{r\rightarrow \infty}C^*(P_d(X),\mathcal {A}(H,\xi))_{O_r}^\Gamma.$$
It is not difficult to check that (cf.~Theorem 3.3 in \cite{ShanWang})
$$\lim\limits_{d\rightarrow \infty}\lim\limits_{r\rightarrow \infty}K_*(C^*(P_d(X),\mathcal {A}(H,\xi))_{O_r}^\Gamma)\cong \lim\limits_{r\rightarrow \infty}\lim\limits_{d\rightarrow \infty}K_*(C^*(P_d(X),\mathcal {A}(H,\xi))_{O_r}^\Gamma)$$
$$\lim\limits_{d\rightarrow \infty}\lim\limits_{r\rightarrow \infty}K_*(C_{\pi,L}^*(P_d(X),\mathcal {A}(H,\xi))_{O_r}^\Gamma)\cong \lim\limits_{r\rightarrow \infty}\lim\limits_{d\rightarrow \infty}K_*(C_{\pi,L}^*(P_d(X),\mathcal {A}(H,\xi))_{O_r}^\Gamma)$$
and we can get the following commuting diagram:
$$\xymatrix{
 \lim\limits_{d\rightarrow\infty}K_*(C^{*}_{\pi,L}(P_{d}(X), \mathcal {A}(H,\xi))^\Gamma) \ar[d]_{\cong} \ar[r]^{(e'_\pi)_*} & \lim \limits_{d\rightarrow\infty}K_*(C^{*}(P_{d}(X), \mathcal {A}(H,\xi))^\Gamma) \ar[d]_{\cong}  \\
 \lim \limits_{d\rightarrow\infty}\lim \limits_{r\rightarrow\infty}K_*(C_{\pi,L}^*(P_d(X),\mathcal {A}(H,\xi))_{O_r}^\Gamma) \ar[d]_{\cong}  \ar[r]^{(e'_\pi)_*} & \lim \limits_{d\rightarrow\infty}\lim \limits_{r\rightarrow\infty}K_*(C^*(P_d(X),\mathcal {A}(H,\xi))_{O_r}^\Gamma)  \ar[d]_{\cong}  \\
   \lim \limits_{r\rightarrow\infty}\lim \limits_{d\rightarrow\infty}K_*(C_{\pi,L}^*(P_d(X),\mathcal {A}(H,\xi))_{O_r}^\Gamma) \ar[r]^{(e'_\pi)_*} & \lim \limits_{r\rightarrow\infty}\lim \limits_{d\rightarrow\infty}K_*(C^*(P_d(X),\mathcal {A}(H,\xi))_{O_r}^\Gamma).}$$
So to prove Proposition \ref{sec:3:thm:3.11}, it suffices to show that for any $r_0>0$,
\begin{align*}
(e'_\pi)_*:&\lim \limits_{d\rightarrow\infty}K_*(\lim\limits_{r<r_0,r\rightarrow r_0}C_{\pi,L}^*(P_d(X),\mathcal {A}(H,\xi))_{O_r}^\Gamma) \\
&\hspace{50mm} \rightarrow\lim \limits_{d\rightarrow\infty}K_*(\lim\limits_{r<r_0,r\rightarrow r_0}C^*(P_d(X),\mathcal {A}(H,\xi))_{O_r}^\Gamma)
\end{align*}
is an isomorphism.

Let $r_0>0$. Since $X$ has bounded geometry and $\Gamma$ acts on $X$ properly and isometrically, then $\widetilde{X}=X/\Gamma$ is a bounded geometrical metric space. So there exists finitely many mutually disjoint subsets of $\widetilde{X}$, say $\widetilde{X}_k:=\{\pi(x_j):j\in J_k\}$ with some index set $J_k$ for $k=1,2,\cdots, k_0$, such that $\widetilde{X}=\bigsqcup\limits_{k=1}^{k_0}\widetilde{X_k}$ and for $\pi(x_i),\pi(x_j)\in \widetilde{X}_k$, $d(\pi(x_i),\pi(x_j))$ is large enough such that $d(\xi(\pi(x_i)), \xi(\pi(x_j)))> 2r_0$.

For each $k=1,2,\cdots, k_0$ and $0<r<r_0$, we denote
$$O_{r,k}=\bigcup\limits_{j\in J_k}\big(B(\xi(\pi(x_j)), r)\big),$$
we will complete the proof of this theorem by using the Mayer-Vietoris sequence argument as Theorem 6.8 in~\cite{Yu} once we proved that $$(e'_\pi)_*:\lim\limits_{d\rightarrow \infty}K_*(C_{\pi,L}^*(P_d(X),\mathcal {A}(H,\xi))_{O_{r,k}}^\Gamma)\rightarrow\lim\limits_{d\rightarrow \infty}K_*(C^*(P_d(X),\mathcal {A}(H,\xi))_{O_{r,k}}^\Gamma)$$
is an isomorphism. For simplicity, we write $O_{r,k}$ to be $O$ and $B(\xi(\pi(x_j)), r)$ to be $O_j$.

Since the $\Gamma$-action on $\mathcal{A}(H,\xi)$ is trivial, for any $d>0$, we can show that (cf. Lemma 6.4 in~\cite{Yu} or Lemma 4.1 in~\cite{ShanWang})
$$C^*(P_d(X),\mathcal {A}(H,\xi))^\Gamma_O\cong \prod\limits_{j=1}^\infty C^*_r(\Gamma)\otimes \mathcal{A}(H,\xi)_{O_j}\otimes K(H)$$
where $\mathcal{A}(H,\xi)_{O_j}=\{a\in\mathcal{A}(H,\xi):\supp(a)\in O_j\}$.
Let $A_L^*$ be the algebra which is defined by the supremum norm closure of the set of uniformly continuous and uniformly bounded functions
$$g:[0,+\infty)\rightarrow \prod\limits_{j=1}^\infty C^*_r(\Gamma)\otimes \mathcal {A}(H,\xi)_{O_j}\otimes K(H).$$
By using the $\pi$-strong Lipschitz homotopy equivalent and a similar argument to the proof of Lemma 6.4 in~\cite{Yu}, we know that $\lim\limits_{d\rightarrow \infty}K_*(C_{\pi,L}^*(P_d(X),\mathcal {A}(H,\xi))_O^{\Gamma})$ is isomorphic to $K_*(A_L^*)$. Let $A_{L,0}^*$ be the ideal of $A_L^*$ with $g(0)=0$. Then by the Eilenberg swindle argument we have $$K_*(A_{L,0}^*)=0.$$
Moreover, $$0\rightarrow A_{L,0}^*\rightarrow A_L^*\rightarrow \prod\limits_{j=1}^\infty C^*_r(\Gamma)\otimes \mathcal {A}(H,\xi)_{O_j}\otimes K(H)$$ is exact,
so we can prove that $$K_*(A_L^*)\cong K_*(\prod\limits_{j=1}^\infty C^*_r(\Gamma)\otimes \mathcal {A}(H,\xi)_{O_j}\otimes K(H))$$ by the six-term exact sequence of $K$-theory, so $$\lim\limits_{d\rightarrow \infty}K_*(C_{\pi,L}^*(P_d(X),\mathcal {A}(H,\xi))_O^{\Gamma})\rightarrow \lim\limits_{d\rightarrow \infty}K_*(C^*(P_d(X),\mathcal {A}(H,\xi))_O^{\Gamma})$$ is isomorphic.
\end{proof}

\begin{thm}\label{sec:3:thm:3.12}
Let $\Gamma$ be a countable discrete group, $X$ a $\Gamma$-space, $H$ a real Hilbert space. If $X/\Gamma$ admits an coarse embedding into $H$, then the homomorphism $$(e_{\pi})_*:\lim\limits_{d\rightarrow \infty}K_*(C_{\pi,L}^*(P_d(X))^{\Gamma})\rightarrow \lim\limits_{d\rightarrow \infty}K_*(C^*(P_d(X))^{\Gamma})$$ is injective.
\end{thm}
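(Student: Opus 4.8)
The strategy is to realize $(e_\pi)_*$ as a retract of the isomorphism $(e'_\pi)_*$ from Proposition~\ref{sec:3:thm:3.11} via a Bott map and an asymptotic-morphism-type inverse, exactly as in the untwisted Roe-algebra case treated by Yu~\cite{Yu}, but carried out $\Gamma$-equivariantly and relative to the quotient metric $d(\pi(x),\pi(y))$. First I would construct a Bott homomorphism
$$\beta:C^*(P_d(X))^\Gamma\longrightarrow C^*(P_d(X),\mathcal{A}(H,\xi))^\Gamma,\qquad \beta_L:C^*_{\pi,L}(P_d(X))^\Gamma\longrightarrow C^*_{\pi,L}(P_d(X),\mathcal{A}(H,\xi))^\Gamma,$$
sending a kernel $T(x,y)$ to $\beta(\pi(x))(\,\cdot\,)\hat\otimes T(x,y)$, i.e. tensoring with the generator $\beta(\pi(x))\in \mathcal{A}(H,\xi)$ attached to the image point $\xi(\pi(x))\in H$. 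One checks that $\beta$ respects propagation (condition (4)), the finite-column condition (3), the $\Gamma$-invariance (6), and — crucially for the localized version — that it sends functions with $\sup\{d(\pi(x),\pi(y)):(x,y)\in\supp f(t)\}\to 0$ to functions satisfying condition (2) of $C^*_{\pi,L,alg}(P_d(X),\mathcal{A}(H,\xi))^\Gamma$; here one uses that $\xi$ is a coarse embedding of $X/\Gamma$, so small quotient-distance of supports forces small $\|\xi(\pi(x))-\xi(\pi(y))\|$ and the support bound (5) with a uniform radius. The Bott maps commute with the evaluation maps $e_\pi$ and $e'_\pi$, giving a commuting square.

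Next I would observe that the composition $\mathcal{S}\to\mathcal{A}(H,\xi)$ at the $K$-theory level, after multiplying by the Bott generator, induces the identity on $K$-theory of the coefficient algebra in the usual Higson--Kasparov--Trout sense, so $\beta_*$ on both the Roe side and the $\pi$-localization side is a \emph{split injection}: there is a left inverse (a "Dirac" or asymptotic-morphism map) $\alpha_*$ with $\alpha_*\beta_* = \mathrm{id}$. The point is that the Bott map does not change $C^*_r(\Gamma)$ or the $K(H)$ factor; it only multiplies the $\mathcal{A}(H,\xi)$ coefficient, and the composite of Bott with its Dirac partner is homotopic to the identity uniformly in the relevant parameters, compatibly with the inductive limits over $d$ and the evaluation homomorphisms. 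This yields a commuting diagram
$$
\xymatrix{
\lim\limits_{d\to\infty}K_*(C^*_{\pi,L}(P_d(X))^\Gamma) \ar[r]^{(e_\pi)_*} \ar[d]_{(\beta_L)_*} & \lim\limits_{d\to\infty}K_*(C^*(P_d(X))^\Gamma) \ar[d]^{\beta_*} \\
\lim\limits_{d\to\infty}K_*(C^*_{\pi,L}(P_d(X),\mathcal{A}(H,\xi))^\Gamma) \ar[r]^{(e'_\pi)_*}_{\cong} & \lim\limits_{d\to\infty}K_*(C^*(P_d(X),\mathcal{A}(H,\xi))^\Gamma)
}
$$
in which the bottom arrow is an isomorphism by Proposition~\ref{sec:3:thm:3.11} and the left vertical arrow $(\beta_L)_*$ is injective (it is split by the corresponding Dirac map on the localized algebras). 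A diagram chase then forces $(e_\pi)_*$ to be injective: if $(e_\pi)_*(\xi)=0$ then $(e'_\pi)_*(\beta_L)_*(\xi)=\beta_*(e_\pi)_*(\xi)=0$, hence $(\beta_L)_*(\xi)=0$, hence $\xi=0$.

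The main obstacle is the correct equivariant bookkeeping in the Bott map — ensuring that tensoring the kernel $T(x,y)$ with $\beta(\pi(x))$ really lands in $C^*_{alg}(P_d(X),\mathcal{A}(H,\xi))^\Gamma$, in particular that conditions (1) (finiteness of the affine subspace $W_N(\pi(x))$) and (5) (uniform support radius $r_2$) hold with the same constants as the propagation of $T$ shrinks, and that everything is genuinely $\Gamma$-invariant given that the $\Gamma$-action on $\mathcal{A}(H,\xi)$ is trivial while the labels $\pi(x)$ are $\Gamma$-invariant. The second delicate point is constructing the Dirac/asymptotic-morphism left inverse so that it commutes, up to the appropriate homotopies, with the evaluation maps and the direct limits over $d$ simultaneously — this is where one imports the Higson--Kasparov--Trout machinery and the argument of~\cite{Yu}, now read off the quotient metric. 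Once these two ingredients are in place the injectivity of $(e_\pi)_*$ is purely formal.
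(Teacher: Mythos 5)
Your overall architecture is the paper's: build the Bott maps $\beta$ and $\beta_{\pi,L}$ by tensoring a kernel $T(x,y)$ with $\beta(\pi(x))(g_t)$ (note these are asymptotic morphisms out of $\mathcal{S}\hat{\otimes}C^*_{alg}(P_d(X))^\Gamma$, with the rescaling $g_t(s)=g(t^{-1}s)$, not $*$-homomorphisms as your first display suggests), form the commuting square with the evaluation maps, invoke Proposition~\ref{sec:3:thm:3.11} for the bottom isomorphism, and do the diagram chase using injectivity of the left vertical map. That chase is fine and is exactly how the paper concludes.

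The genuine gap is in how you justify the crucial injectivity of $(\beta_{\pi,L})_*$. You assert that because the coefficient map $\mathcal{S}\to\mathcal{A}(H,\xi)$ is a $K$-isomorphism (Higson--Kasparov--Trout), the Bott map is a split injection ``on both the Roe side and the $\pi$-localization side,'' with a Dirac left inverse, and that the rest is ``purely formal.'' This is not formal, and for the Roe-algebra side it is essentially the content of the theorem one is trying to prove: the coefficient $\beta(\pi(x))$ varies with the spatial variable and the algebra is not a tensor product $C^*(P_d(X))^\Gamma\hat{\otimes}\mathcal{A}(H,\xi)$, so HKT periodicity for the coefficient algebra does not by itself split $\beta_*$; constructing a Dirac asymptotic morphism and proving $\alpha_*\beta_*=\mathrm{id}$ is the hard analytic part of Yu's argument and is nowhere needed (nor done) in this paper. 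What the paper actually does for the step you are hand-waving is purely local: it proves $(\beta_{\pi,L})_*$ is an isomorphism by induction on the dimension of the skeleta of $P_d(X)$, using the Mayer--Vietoris sequence for the twisted and untwisted $\pi$-localization algebras (Proposition~\ref{sec:3;prop:3.5}), which in turn rests on the $\pi$-strong Lipschitz homotopy invariance of Proposition~\ref{sec:3:prop:3.4}; the HKT Bott periodicity theorem enters only in the zero-dimensional case. If you insist on the Dirac-splitting route, you must actually construct $\alpha_{\pi,L}$ compatibly with the quotient-metric propagation condition defining $C^*_{\pi,L}$ and prove the composition formula there--substantially more work than the localization argument, and your proposal gives no construction. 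So as written, the key injectivity claim is unsupported, and the stated reason for it (the Bott map ``only multiplies the coefficient'') would, if valid, already prove far stronger statements than the theorem.
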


\begin{proof}
For any $d>0$ and for each $t\geq 1$, define a map
$$\beta_t:\mathcal {S}\hat{\otimes}C_{alg}^*(P_d(X))^\Gamma\rightarrow C^*(P_d(X),\mathcal {A}(H,\xi))^\Gamma$$ by$$(\beta_t(g\hat{\otimes }T))(x,y)=(\beta(\pi(x)))(g_t)\hat{\otimes}T(x,y)$$ for all $g\in \mathcal {S}$ and $T\in C_{alg}^*(P_d(X))^\Gamma$, where $g_t(s)=g(t^{-1}s)$ for all $s\in \mathbb{R}$, and $\beta(\pi(x)):\mathcal {S}=\mathcal {A}(\xi(\pi(x)))\rightarrow \mathcal {A}(H,\xi)$ is the $*$-homomorphism from Definition \ref{clif-alg}.

Similarly, we can define $$(\beta_{\pi,L})_t:\mathcal {S}\hat{\otimes}C_{\pi,L,alg}^*(P_d(X))^\Gamma\rightarrow C_{\pi,L}^*(P_d(X),\mathcal {A}(H,\xi))^\Gamma.$$
The maps $\beta_t$ and $(\beta_{\pi,L})_t$ extend to asymptotic morphisms (cf. Lemma 7.6 in~\cite{Yu} or Lemma 5.8 in~\cite{Fu})
$$\beta: \mathcal {S}\hat{\otimes}C^*(P_d(X))^\Gamma \rightarrow C^*(P_d(X), \mathcal {A}(H,\xi))^\Gamma;$$
$$\beta_{\pi,L}: \mathcal {S}\hat{\otimes}C_{\pi,L}^*(P_d(X))^\Gamma \rightarrow C_{\pi,L}^*(P_d(X), \mathcal {A}(H,\xi))^\Gamma.$$
So they induce the homomorphisms $$(\beta)_*: K_*(C^*(P_d(X))^\Gamma) \rightarrow K_*(C^*(P_d(X), \mathcal {A}(H,\xi))^\Gamma);$$
$$(\beta_{\pi,L})_*: K_*(C_{\pi,L}^*(P_d(X))^\Gamma) \rightarrow K_*(C_{\pi,L}^*(P_d(X), \mathcal {A}(H,\xi))^\Gamma).$$
And we get a commutative diagram:
$$\xymatrix{
 \lim\limits_{d\rightarrow \infty}K_*(C_{\pi,L}^*(P_d(X))^{\Gamma})\ar[d]_{(\beta_{\pi,L})_*} \ar[r]^{(e_\pi)_*}  & \lim\limits_{d\rightarrow \infty}K_*(C^*(P_d(X))^{\Gamma}) \ar[d]_{(\beta)_*}  \\
  \lim\limits_{d\rightarrow \infty}K_*(C_{\pi,L}^*(P_d(X),\mathcal {A}(H,\xi))^{\Gamma})  \ar[r]^{(e'_\pi)_*} &    \lim\limits_{d\rightarrow \infty}K_*(C^*(P_d(X),\mathcal {A}(H,\xi))^{\Gamma}).}$$

By the induction on the dimension of skeletons of $P_d(X)$ and Proposition \ref{sec:3;prop:3.5}, together with the Bott Periodicity theorem, we know that $(\beta_{\pi,L})_*$ is an isomorphism. This, together with Proposition \ref{sec:3:thm:3.11} implies that
$(e_{\pi})_*$ is injective.
\end{proof}

\section{The $K$-theory of twisted equivariant localization algebra and $\pi$-localization algebra }

In this section, we shall prove that there is an injective map from the $K$-theory of the twisted equivariant localization algebra to the $K$-theory of the twisted equivariant $\pi$-localization algebra for the metric space $X$, where the twisted equivariant localization algebra and $\pi$-localization algebra are defined by using the coarse embedding from $\Gamma$ to the Hilbert space $H$. To do so, we shall prove that the $K$-theory of the twisted equivariant localization algebra is isomorphic to the $K$-theory of the twisted equivariant $\pi$-localization algebra for the metric space $X$, where the twisted equivariant localization algebra and $\pi$-localization algebra are defined by using the coarse embeddings from $\Gamma$ to the Hilbert space $H$ and from $X/\Gamma$ to $H$.

First, let us recall about the transformation groupoid and the relations between the coarse embedding of $\Gamma$ and the proper affine action of the transformation groupoid on a continuous field of Hilbert spaces. Let $\Gamma$ be a countable discrete group. Denote by $e$ its identity element. Assume $\Gamma$ acts on the right on a topological space $Z$. We use $Z\rtimes \Gamma$ to denote the transformation groupoid, where the product and the inverse operations of $Z\rtimes \Gamma$ are given by: $(z,\gamma)(z',\gamma')=(z, \gamma\gamma')$ for $(z,\gamma)$ and $(z', \gamma')\in Z\rtimes \Gamma$ satisfying $z'=z\gamma$, and $(z,\gamma)^{-1}=(z\gamma, \gamma^{-1})$ for $(z,\gamma)\in Z\rtimes \Gamma$.

Second, let us recall about the continuous field of Hilbert spaces (\cite{Dix}, pp. 211-212). $\big((H_z)_{z\in Z}, \Lambda\big)$ is called {\it a continuous field of a family $\{H_z\}_{z\in Z}$ of Hilbert spaces, with $\Lambda\subseteq \bigsqcup\limits_{z\in Z}H_z$} if the following conditions are true:
\begin{itemize}
  \item $\Lambda$ is a complex linear subspace of $\bigsqcup\limits_{z\in Z}H_z$;
  \item for every $z\in Z$, the set $\lambda(z)$ for $\lambda\in \Lambda$ is dense in $H_z$;
  \item for every $\lambda\in \Lambda$, the function $z\rightarrow \|\lambda(z)\|$ is continuous;
  \item let $\lambda\in \bigsqcup\limits_{z\in Z}H_z$, if for every $z\in Z$ and every $\varepsilon>0$, there exists a $\lambda'\in \Lambda$ such that $\|\lambda(y)-\lambda'(y)\|\leq \varepsilon$ throughout some neighborhood of $z$, then $\lambda\in \Lambda$.
\end{itemize}
We will abbreviate $\big((H_z)_{z\in Z}, \Lambda\big)$ to $(H_z)_{z\in Z}$ in this paper.

\begin{defn}\label{sec:4:def:3}
Let $\mathcal {H}=(H_z)_{z\in Z}$ be a continuous field of Hilbert spaces over $Z$. We say that the transformation groupoid $Z\rtimes \Gamma$ acts on $\mathcal {H}$ by {\it affine isometries} if for every $(z,\gamma)$, there is an affine isometry $\alpha_{(z,\gamma)}:H_{z\gamma}\rightarrow H_z$ such that
\begin{enumerate}
  \item [(1)] $\alpha_{(z,e)}:H_{z}\rightarrow H_z$ is the identity map;
  \item [(2)] $\alpha_{(z,\gamma)}\alpha_{(z',\gamma')}=\alpha_{(z,\gamma\gamma')}$ if $z'=z\gamma$;
  \item [(3)] for every continuous vector field $h(z)$ in $\mathcal {H}$ and every $\gamma\in\Gamma$, $\alpha_{(z,\gamma)}(h(z\gamma))$ is a continuous vector field in $\mathcal {H}$.
\end{enumerate}
\end{defn}

\begin{defn}[Tu, \cite{Tu}]
Let $Z\rtimes \Gamma$ act on $\mathcal {H}$ as in Definition \ref{sec:4:def:3}. The action is said to be {\it proper} if for any $R>0$ and $z\in Z$, the number of the elements in $\{\gamma\in\Gamma:\alpha_{(z,\gamma)}(B_{H_{z\gamma}}(R))\cap B_{H_z}(R)\neq \emptyset\}$ is finite, where $B_{H_z}(R)=\{v\in H_z: \|v\|\leq R\}$.
\end{defn}

In fact, by Proposition 6.5 and Lemma 6.7 in~\cite{Skan} and Proposition 3.8 in~\cite{Tu}, we have the following proposition:
\begin{prop}
The following are equivalent:
\begin{enumerate}
  \item [(1)] $\eta:\Gamma\rightarrow H$ is a coarse embedding;
  \item [(2)] there exists a convex, compact, Hausdorff, second countable space $Z$ with a right action of $\Gamma$ which admits a continuous, proper negative type function on $Z\rtimes \Gamma$ (see Definition 6.4 in~\cite{Skan});
  \item [(3)] $Z\rtimes \Gamma$ in (2) admits a proper action on a continuous field of affine Hilbert spaces $\mathcal {H}=(H_z)_{z\in Z}$.
\end{enumerate}
\end{prop}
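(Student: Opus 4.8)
The plan is to obtain the three equivalences by assembling the quoted results one implication at a time, with the construction of the base space $Z$ being the part that requires the most care.

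For $(1)\Rightarrow(2)$: starting from a coarse embedding $\eta:\Gamma\rightarrow H$, I would form the $\Gamma$-invariant negative type kernel $k(\gamma,\gamma')=\|\eta(\gamma)-\eta(\gamma')\|^2$ on $\Gamma\times\Gamma$ and show that it descends to a continuous, proper negative type function on a transformation groupoid $Z\rtimes\Gamma$ for a suitable $Z$. To make $Z$ convex, compact, Hausdorff and second countable simultaneously, I would take $Z$ to be a weak-$*$ closed, $\Gamma$-invariant convex subset of the closed unit ball of the dual of a separable $\Gamma$-invariant subalgebra of $\ell^\infty(\Gamma)$ large enough to record the relevant partial translations; such a ball is weak-$*$ compact and metrizable since its predual is separable. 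This is precisely the construction underlying \cite[Proposition~6.5]{Skan}, whose output is a continuous proper negative type function on $Z\rtimes\Gamma$ in the sense of \cite[Definition~6.4]{Skan}, properness of the function transporting the $\rho_-$ control of $\eta$.

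For $(2)\Leftrightarrow(3)$: this is the groupoid version of Schoenberg's theorem, due to Tu. Given a continuous proper negative type function $\psi$ on $Z\rtimes\Gamma$, the fiberwise GNS-type construction attached to $\psi$ produces a continuous field of affine Hilbert spaces $\mathcal{H}=(H_z)_{z\in Z}$ together with affine isometries $\alpha_{(z,\gamma)}:H_{z\gamma}\rightarrow H_z$ satisfying the cocycle identities; continuity of $\psi$ yields condition (3) of Definition~\ref{sec:4:def:3}, and properness of $\psi$ is equivalent to properness of the resulting action. Conversely, from a proper affine action one recovers a continuous proper negative type function via $\psi(z,\gamma)=\|\alpha_{(z,\gamma)}(s(z\gamma))-s(z)\|^2$ for a fixed continuous section $s$ of the field. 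This is \cite[Proposition~3.8]{Tu} together with \cite[Lemma~6.7]{Skan}.

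Finally, for $(2)\Rightarrow(1)$: fix $z_0\in Z$ and evaluate $\psi$ along its orbit, obtaining a negative type function $\gamma\mapsto\psi(z_0\gamma^{-1},\gamma)$ on $\Gamma$; Schoenberg's theorem gives a Hilbert space $H$ and a map $\eta:\Gamma\rightarrow H$ realizing the associated kernel. Properness of $\psi$ forces $\|\eta(\gamma)\|\rightarrow\infty$, giving $\rho_-$; for $\rho_+$ I would use that $\psi(\cdot,\gamma)$ is continuous on the compact space $Z$, hence bounded for each fixed $\gamma$, and combine this with the subadditivity inequality for negative type functions along words in a generating set to bound $\psi(z_0\gamma^{-1},\gamma)$ by a function of the word length of $\gamma$. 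I expect the main obstacle to be the first implication: arranging $Z$ to be convex, compact, Hausdorff and second countable all at once while keeping the negative type function continuous and proper, since the natural candidate $Z=\beta\Gamma$ fails second countability and a careless separable quotient can destroy properness. The reference \cite[Proposition~6.5]{Skan} is engineered precisely for this, so the proof amounts to checking that its construction applies verbatim once $\Gamma$ is countable, the remaining implications then being formal consequences of Schoenberg's theorem in the groupoid setting.
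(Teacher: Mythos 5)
Your proposal is correct and takes essentially the same route as the paper, which gives no argument of its own beyond citing Proposition 6.5 and Lemma 6.7 of Skandalis--Tu--Yu \cite{Skan} and Proposition 3.8 of Tu \cite{Tu}; your sketch is an accurate unpacking of exactly those references (separable invariant subalgebra of $\ell^\infty(\Gamma)$ to get a second countable convex compact $Z$, groupoid Schoenberg for $(2)\Leftrightarrow(3)$, orbit restriction for $(2)\Rightarrow(1)$). The only loose phrasing --- restricting $\psi$ to the orbit of $z_0$ produces a conditionally negative definite \emph{kernel} $k(\gamma,\delta)=\psi(z_0\gamma^{-1},\gamma\delta^{-1})$ on $\Gamma$ rather than an invariant negative type function --- is harmless, since the GNS/Schoenberg construction applies to kernels and your properness and compactness estimates for $\rho_-$ and $\rho_+$ go through unchanged.
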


For each Hilbert space $H_z$ of $\mathcal {H}=(H_z)_{z\in Z}$, let $\mathcal {A}(H_z)$ be the $C^*$-algebra which is defined below Definition \ref{clif-alg}. We use $\Theta(Z, \bigsqcup\limits_{z\in Z}\mathcal {A}(H_z))$ to denote the set of sections $f: Z\rightarrow\bigsqcup\limits_{z\in Z}\mathcal {A}(H_z)$ satisfying the following conditions:
\begin{enumerate}
  \item [(1)] $f(z)\in \mathcal {A}(H_z)$ for each $z\in Z$;
  \item [(2)] the continuous sections from $Z$ to $H_z$ is contained in $\Theta(Z, \bigsqcup\limits_{z\in Z}\mathcal {A}(H_z))$;
  \item [(3)] for any $z\in Z$ and $n\in \mathbb{N}$, there exists an open set $U_z\subseteq Z$ and continuous sections $a_0(z),a_1(z), \cdots, a_n(z):Z\rightarrow (H_z)_{z\in Z}$ such that

  \begin{itemize}
    \item $a_0(y), a_1(y), \cdots, a_n(y)$ are norm one and affine independent when $y\in U_z$;
    \item the isometry via the map
    $$C_b(U_z)\otimes \mathcal {A}(\mathbb{R}^n)\rightarrow\Theta(Z, \bigsqcup\limits_{z\in Z}\mathcal {A}(H_z))|_{U_z},\quad \mathbb{1}_{U_z}\otimes e_i\mapsto  a_i|_{U_z}$$
    induces {\it a continuous field structure} on $\Theta(Z, \bigsqcup\limits_{z\in Z}\mathcal {A}(H_z))$ (cf. \cite{HG}), where $\mathbb{1}_{U_z}$ is the constant function on $U_z$ with value $1$, $e_i(1\leq i\leq n)$ are the orthonormal basis of $\mathbb{R}^n$, $\mathcal {A}(\mathbb{R}^n)$ is defined as in section 3.
  \end{itemize}
\end{enumerate}
It is easy to check that the collection of $C^*$-algebras $(\mathcal{A}(H_z))_{z\in Z}$ with the sections $\Theta(Z, \bigsqcup\limits_{z\in Z}\mathcal {A}(H_z))$ is a continuous field of $C^*$-algebras (see Definition 10.3.1 in~\cite{Dix}).

\begin{defn}\label{proper alg}
For any section $f\in \Theta(Z, \bigsqcup\limits_{z\in Z}\mathcal {A}(H_z))$, put $$\|f\|=\sup\limits_{z\in Z}\|f(z)\|.$$
The $C^*$-algebra $\mathcal {A}(\mathcal {H}, \eta)$ is defined to be the completion of $\Theta(Z, \bigsqcup\limits_{z\in Z}\mathcal {A}(H_z))$ with the above norm.
\end{defn}

\begin{lem}
If $\Gamma$ is coarse embeddable into a Hilbert space, then $\mathcal {A}(\mathcal {H},\eta)$ above is a $\Gamma$-proper $C^*$-algebra.
\end{lem}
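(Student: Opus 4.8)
The goal is to show that the $C^*$-algebra $\mathcal{A}(\mathcal{H},\eta)$ carries a $\Gamma$-action making it a $\Gamma$-proper $C^*$-algebra in the sense of Kasparov (i.e.\ there is a proper $\Gamma$-space whose algebra of $C_0$-functions maps centrally and nondegenerately into the multipliers, with the image commuting with $\mathcal{A}(\mathcal{H},\eta)$ suitably). The plan is to transport the proper affine isometric action of the transformation groupoid $Z\rtimes\Gamma$ on the continuous field $\mathcal{H}=(H_z)_{z\in Z}$, supplied by the proposition recalled above, to an action on the continuous field of $C^*$-algebras $(\mathcal{A}(H_z))_{z\in Z}$, and then to package that as a $\Gamma$-action on the section algebra $\mathcal{A}(\mathcal{H},\eta)$, using $Z$ itself (which is convex, compact, Hausdorff, second countable, hence in particular a proper — indeed compact — $\Gamma$-space after the standard doubling trick) as the base.

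\textbf{Step 1: Functoriality of $\mathcal{A}(-)$ under affine isometries.} First I would recall that an affine isometry $T:V\to W$ of finite-dimensional affine spaces induces a graded $*$-homomorphism $\mathcal{A}(T):\mathcal{A}(V)\to\mathcal{A}(W)$ (this is exactly the mechanism behind $\beta_{ba}$ in Definition~\ref{clif-alg}, which is the special case of the inclusion $V_a\hookrightarrow V_b$; the general affine isometry is handled as in Higson--Kasparov--Trout~\cite{HKT}), and that this assignment is functorial: $\mathcal{A}(T\circ T')=\mathcal{A}(T)\circ\mathcal{A}(T')$ and $\mathcal{A}(\mathrm{Id})=\mathrm{Id}$. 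Passing to the direct limits over finite-dimensional affine subspaces, an affine isometry of the ambient Hilbert spaces induces a $*$-isomorphism of the corresponding $\mathcal{A}(H_z)$'s.

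\textbf{Step 2: The groupoid action on $(\mathcal{A}(H_z))_{z\in Z}$ and the induced $\Gamma$-action on sections.} Applying Step 1 fiberwise to the affine isometries $\alpha_{(z,\gamma)}:H_{z\gamma}\to H_z$ from Definition~\ref{sec:4:def:3} gives $*$-isomorphisms $\mathcal{A}(\alpha_{(z,\gamma)}):\mathcal{A}(H_{z\gamma})\to\mathcal{A}(H_z)$; conditions (1) and (2) of Definition~\ref{sec:4:def:3} translate directly into the cocycle identities $\mathcal{A}(\alpha_{(z,e)})=\mathrm{Id}$ and $\mathcal{A}(\alpha_{(z,\gamma)})\mathcal{A}(\alpha_{(z\gamma,\gamma')})=\mathcal{A}(\alpha_{(z,\gamma\gamma')})$, while condition (3) (continuity of the transported vector fields) ensures that these fiberwise maps assemble into an automorphism of the continuous field, i.e.\ they carry $\Theta(Z,\bigsqcup_z\mathcal{A}(H_z))$ to itself and preserve the supremum norm. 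Concretely, for $f\in\Theta$ one sets $(\gamma\cdot f)(z)=\mathcal{A}(\alpha_{(z,\gamma)})\bigl(f(z\gamma)\bigr)$; the cocycle identities give a $\Gamma$-action, and the isometry of each $\mathcal{A}(\alpha_{(z,\gamma)})$ makes $\gamma\cdot$ norm-preserving, so it extends to an action by $*$-automorphisms on the completion $\mathcal{A}(\mathcal{H},\eta)$.

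\textbf{Step 3: Properness.} Finally I would verify the $\Gamma$-properness. The base $Z$, being compact Hausdorff second countable with a $\Gamma$-action, is automatically a proper $\Gamma$-space (any compact $G$-space is proper), so $C(Z)$ with the induced $\Gamma$-action is the algebra of functions on a proper $\Gamma$-space; it maps $\Gamma$-equivariantly and centrally into $M(\mathcal{A}(\mathcal{H},\eta))$ via $(\varphi\cdot f)(z)=\varphi(z)f(z)$, since $C_0(\mathbb{R}_+\times H_z)$ sits in the center of $\mathcal{A}(H_z)$ and constants over $Z$ multiply sections to sections. The essential point, and the place where the hypothesis that $\eta$ is a coarse embedding is genuinely used, is the Kasparov $\Gamma$-properness condition on $\mathcal{A}(\mathcal{H},\eta)$ \emph{itself} (the analogue, in the presence of the $\mathbb{R}_+\times H$ directions, of "proper support"): here one invokes that the groupoid action on $\mathcal{H}$ is \emph{proper} (Tu's definition, guaranteed by (3)$\Leftrightarrow$(1) of the preceding proposition), which bounds, for each $z$ and each $R>0$, the number of $\gamma$ with $\alpha_{(z,\gamma)}(B_{H_{z\gamma}}(R))\cap B_{H_z}(R)\neq\emptyset$; translating through the support of elements of $\mathcal{A}(H_z)$ (Definition~\ref{supp_a}) shows that for a fixed cutoff over $Z$ only finitely many group translates of a compactly-supported-in-the-$\mathbb{R}_+\times H$-direction section overlap, which is exactly the $\Gamma$-properness of $\mathcal{A}(\mathcal{H},\eta)$. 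This last translation — from properness of the affine action to properness of the section $C^*$-algebra — is the main obstacle, and it follows the argument of Tu~\cite{Tu} and Skandalis--Tu--Yu~\cite{Skan}; I would cite those and indicate that the cutoff function on $Z$ together with the finiteness statement does the job.
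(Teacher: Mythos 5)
Your proposal is correct and follows essentially the same route as the paper: the paper defines the $\Gamma$-action by the same formula $(\gamma(f)(z))(t,v)=f(z\gamma)\bigl(t,\alpha_{(z,\gamma)^{-1}}(v)\bigr)$, i.e.\ the fiberwise transport of the affine isometries $\alpha_{(z,\gamma)}$ to the algebras $\mathcal{A}(H_z)$, and then deduces $\Gamma$-properness of $\mathcal{A}(\mathcal{H},\eta)$ from the properness of the affine action, exactly the Tu/Skandalis--Tu--Yu argument you invoke. Your write-up merely spells out the functoriality and the properness translation that the paper leaves implicit.
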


\begin{proof}
Let $\alpha$ be the proper affine isometric action of the groupoid $Z\rtimes \Gamma$ on the continuous Hilbert field $\mathcal {H}=(H_z)_{z\in Z}$.
For any element $f\in\mathcal {A}(\mathcal {H},\eta)$, define $$(\gamma(f)(z))(t,v)=f(z\gamma)(t, \alpha_{(z,\gamma)^{-1}}(v)),$$ where $z\in Z, t\in \mathbb{R}, v\in H_z$. This is an action of $\Gamma$ on $\mathcal {A}(\mathcal {H},\eta)$. Moreover, $\mathcal {A}(\mathcal {H},\eta)$ is a $\Gamma$-proper $C^*$-algebra since $\alpha$ is a proper affine isometric action on $\mathcal {H}=(H_z)_{z\in Z}$.
\end{proof}

Now let's define another version twisted Roe algebra by using the coarse embeddings of the quotient space $X/\Gamma$ and the group $\Gamma$. Let $\mathcal {A}(H,\xi)$ be the $C^*$-algebra which is defined in section 3 by using the coarse embeddings of the quotient space $X/\Gamma$, $\mathcal {A}(\mathcal{H},\eta)$ be the proper $C^*$-algebra defined in Definition \ref{proper alg}.

Choose a countable $\Gamma$-invariant dense subset $X_d$ of $P_d(X)$ for each $d>0$ such that $X_{d_1}\subseteq X_{d_2}$ if $d_1\leq d_2$. Let $C_{alg}^*(P_d(X),\mathcal {A}(H,\xi)\hat{\otimes}\mathcal {A}(\mathcal {H},\eta))^\Gamma$ be the set of all functions $T$ on $X_d\times X_d$ such that
\begin{enumerate}
\item[(1)] $T(x,y)\in\mathcal {A}(H,\xi)\hat{\otimes}\mathcal {A}(\mathcal {H},\eta)\hat{\otimes}K$ for $x,y\in X_d$;
\item[(2)] there exists $M\geq 0$ such that $\|T(x,y)\|\leq M$ for all $x,y\in X_d$;
\item[(3)] there exists $L>0$ such that for each $y\in X_d$, $$\sharp\{x:T(x,y)\neq 0\}\leq L,\qquad \sharp\{x:T(y,x)\neq 0\}\leq L;$$
\item[(4)] there exists $r_1>0$ and $r_2>0$ such that
\begin{itemize}
  \item $T(x,y)=0$ when $d(x,y)>r_1$;
  \item $T(x,y)$ is the finite sum of $T_1(x,y)\hat{\otimes}T_2(x,y)\hat{\otimes}k$ with $\text{Supp} (T_1(x,y))\subseteq B_{\mathbb{R}_+\times H }(\xi(\pi(x)),r_2)$, where $T_1(x,y)\in \mathcal {A}(H,\xi)$, $T_2(x,y)\in\mathcal {A}(\mathcal {H},\eta)$, $k\in K$;
\end{itemize}

\item[(5)] $\gamma(T)=T$, where $\gamma(T)(x,y)=\gamma(T(\gamma^{-1}x,\gamma^{-1}y))$.
\end{enumerate}

We define a product structure on $C^*_{alg}(P_d(X),\mathcal {A}(H,\xi)\hat{\otimes}\mathcal {A}(\mathcal {H},\eta))^\Gamma$ by $$(T_1 T_2)(x,y)=\sum\limits_{z\in X_d}T_1(x,z)T_2(z,y).$$
Let $$E'=\left\{\sum\limits_{x\in X_d}a_x[x]:a_x\in \mathcal {A}(H,\xi)\hat{\otimes}\mathcal {A}(\mathcal {H},\eta)\hat{\otimes} K,\sum\limits_{x\in X_d}a_x^*a_x\ \text{converge in norm}\right\}$$
Then $E'$ is a Hilbert module over $\mathcal {A}(H,\xi)\hat{\otimes}\mathcal {A}(\mathcal {H},\eta)\hat{\otimes}K$ with
$$\left\langle\sum\limits_{x\in X_d}a_x[x],\sum\limits_{x\in X_d}b_x[x]\right\rangle=\sum\limits_{x\in X_d}a_x^*b_x,$$
$$\left(\sum\limits_{x\in X_d}a_x[x]\right)a=\sum\limits_{x\in X_d}a_xa[x]$$
for all $a\in \mathcal {A}(H,\xi)\hat{\otimes}\mathcal {A}(\mathcal {H},\eta)\hat{\otimes} K$ and $\sum\limits_{x\in X_d}a_x[x]\in E'$.

$C^*_{alg}(P_d(X),\mathcal {A}(H,\xi)\hat{\otimes}\mathcal {A}(\mathcal {H},\eta))^{\Gamma}$ acts on $E$ by
$$T\left(\sum\limits_{x\in X_d}a_x[x]\right)=\sum\limits_{y\in X_d}\left(\sum\limits_{x\in X_d}T(y,x)a_x\right)[y],$$
where $T\in C^*_{alg}(P_d(X),\mathcal {A}(H,\xi)\hat{\otimes}\mathcal {A}(\mathcal {H},\eta))^{\Gamma}$ and $\sum\limits_{x\in X_d}a_x[x]\in E'$.
\begin{defn}
The {\it twisted equivariant Roe algebra} $C^*(P_d(X),\mathcal {A}(H,\xi)\hat{\otimes}\mathcal {A}(\mathcal {H},\eta))^{\Gamma}$ is defined to be the operator norm closure of $C^*_{alg}(P_d(X),\mathcal {A}(H,\xi)\hat{\otimes}\mathcal {A}(\mathcal {H},\eta))^{\Gamma}$ in $\mathcal {B}(E')$, the $C^*$-algebra of all module homomorphisms from $E'$ to $E'$ for which there is an adjoint module homomorphism.
\end{defn}

Let $C_{L,alg}^*(P_d(X),\mathcal {A}(H,\xi)\hat{\otimes}\mathcal {A}(\mathcal {H},\eta))^{\Gamma}$ be the set of all bounded and uniformly norm-continuous functions
$$g:\mathbb{R}_+\rightarrow C_{alg}^*(P_d(X),\mathcal {A}(H,\xi)\hat{\otimes}\mathcal {A}(\mathcal {H},\eta))^\Gamma$$
such that
\begin{enumerate}
  \item[(1)] $\sup\{d(x,y):g(t)(x,y)\neq 0\}\rightarrow 0 \ \text{as}\  t\rightarrow \infty$ with $x,y\in X_d$;

  \item[(2)] there exists $L>0$ such that for any $y\in P_d(X)$, $\sharp\{x:g(t)(x,y)\neq 0\}<L$, $\sharp\{x:g(t)(y,x)\neq 0\}<L$ for any $t\in \mathbb{R}_+$.
 \end{enumerate}

\begin{defn}
The {\it twisted $\Gamma$-equivariant localization algebra} $$C_L^*(P_d(X),\mathcal {A}(H,\xi)\hat{\otimes}\mathcal {A}(\mathcal {H},\eta))^{\Gamma}$$ is defined to be the norm completion of $C_{L,alg}^*(P_d(X),\mathcal {A}(H,\xi)\hat{\otimes}\mathcal {A}(\mathcal {H},\eta))^{\Gamma}$ with the norm defined by
$$\|g\|=\sup\limits_{t\in \mathbb{R}_+}\|g(t)\|_{C^*(P_d(X),\mathcal {A}(H,\xi)\hat{\otimes}\mathcal {A}(\mathcal {H},\eta))^\Gamma}.$$
\end{defn}

\begin{defn}
Let $C_{\pi,L,alg}^*(P_d(X),\mathcal {A}(H,\xi)\hat{\otimes}\mathcal {A}(\mathcal {H},\eta))^{\Gamma}$ be a $*$-algebra, which is defined by replacing condition (2) by $$\sup\{d(\pi(x),\pi(y)):g(t)(x,y)\neq 0\}\rightarrow 0 \ \text{as}\  t\rightarrow \infty$$ in the definition of $C_{L,alg}^*(P_d(X),\mathcal {A}(H,\xi)\hat{\otimes}\mathcal {A}(\mathcal {H},\eta))^{\Gamma}$.
The {\it twisted equivariant $\pi$-localization algebra} $C_{\pi,L}^*(P_d(X),\mathcal {A}(H,\xi)\hat{\otimes}\mathcal {A}(\mathcal {H},\eta))^{\Gamma}$ is the norm completion of $C_{\pi,L,alg}^*(P_d(X),\mathcal {A}(H,\xi)\hat{\otimes}\mathcal {A}(\mathcal {H},\eta))^{\Gamma}$ with the norm defined by
$$\|g\|=\sup\limits_{t\in \mathbb{R}_+}\|g(t)\|_{C^*(P_d(X),\mathcal {A}(H,\xi)\hat{\otimes}\mathcal {A}(\mathcal {H},\eta))^\Gamma}.$$
\end{defn}

It is easy to see that we have an inclusion map $$\iota: C_L^*(P_d(X), \mathcal {A}(H,\xi)\hat{\otimes}\mathcal {A}(\mathcal {H},\eta))^\Gamma\rightarrow C_{\pi,L}^*(P_d(X), \mathcal {A}(H,\xi)\hat{\otimes}\mathcal {A}(\mathcal {H},\eta))^{\Gamma},$$ which induces a homomorphism $\iota_*$ at the $K$-theory level.

We note that the $C^*$-algebras $C^*(P_d(X),\mathcal {A}(\mathcal {H},\eta))^{\Gamma}$, $C_L^*(P_d(X),\mathcal {A}(\mathcal {H},\eta))^{\Gamma}$ and $C_{\pi,L}^*(P_d(X),\mathcal {A}(\mathcal {H},\eta))^{\Gamma}$ can be defined in the similar way as above but with the matrix coefficient in $\mathcal {A}(\mathcal {H},\eta)\hat{\otimes}K$. We also have an inclusion map $$\tau: C_L^*(P_d(X), \mathcal {A}(\mathcal {H},\eta))^\Gamma\rightarrow C_{\pi,L}^*(P_d(X), \mathcal {A}(\mathcal {H},\eta))^{\Gamma},$$ which induces a homomorphism $\tau_*$ at the $K$-theory level.

\begin{thm} \label{sec:4:thm:4.8}
Let $X$ be a $\Gamma$-space with bounded distortion. If $X/\Gamma$ and $\Gamma$ are coarse embeddable into Hilbert spaces, then the following map
\begin{align*}
\iota_*:&\lim\limits_{d\rightarrow \infty}K_*(C_L^*(P_d(X), \mathcal {A}(H,\xi)\hat{\otimes}\mathcal {A}(\mathcal {H},\eta))^\Gamma)\\
&\hspace{55mm}\rightarrow \lim\limits_{d\rightarrow \infty}K_*(C_{\pi,L}^*(P_d(X), \mathcal {A}(H,\xi)\hat{\otimes}\mathcal {A}(\mathcal {H},\eta))^{\Gamma})
\end{align*}
is an isomorphism.
\end{thm}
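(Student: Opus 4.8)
The plan is to run the same cutting-and-pasting argument as in the proof of Proposition~\ref{sec:3:thm:3.11}, now tracking the inclusion $\iota$ and exploiting that the coefficient algebra $B:=\mathcal{A}(H,\xi)\hat{\otimes}\mathcal{A}(\mathcal{H},\eta)$ is a proper $\Gamma$-$C^*$-algebra: by the lemma above $\mathcal{A}(\mathcal{H},\eta)$ is $\Gamma$-proper, and $\Gamma$ acts trivially on $\mathcal{A}(H,\xi)$. First I would record that both $C_L^*(P_d(X),B)^\Gamma$ and $C_{\pi,L}^*(P_d(X),B)^\Gamma$ enjoy the same structural features used in Section~3: they are functorial in the open-subset construction $(-)_{O}$ and decompose as inductive limits over the sets $O_r=\bigcup_{\pi(x)}B(\xi(\pi(x)),r)$ as $r\to\infty$; they satisfy a Mayer--Vietoris six-term sequence (the analogue of Proposition~\ref{sec:3;prop:3.5}); and they are $\pi$-strong Lipschitz homotopy invariant (the analogue of Proposition~\ref{sec:3:prop:3.4}) --- and the map $\iota$ is natural for all of these. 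Using the commutation of $K$-theory with the relevant inductive limits (over $d$ and over $r$, cf.~\cite{ShanWang} and the diagram in the proof of Proposition~\ref{sec:3:thm:3.11}), it then suffices to show that for every $r_0>0$ the map $\iota_*$ is an isomorphism on $\lim_d K_*$ of the subalgebras supported in $\lim_{r\to r_0^-}O_r$.

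Next, since $X/\Gamma$ has bounded geometry, split $X/\Gamma=\bigsqcup_{k=1}^{k_0}\widetilde X_k$ with each $\widetilde X_k$ so sparse that the balls $B(\xi(\pi(x_j)),r)$, $j\in J_k$, are pairwise disjoint, and reduce by Mayer--Vietoris to a single region $O=O_{r,k}=\bigsqcup_{j\in J_k}O_j$. On such an $O$ the support conditions force the matrix coefficients of every element to be concentrated over the $\Gamma$-cocompact ``fibre neighbourhoods'' $Y_j:=\pi^{-1}(B(\pi(x_j),C))$ (with $C$ depending only on $r_0$ through $\xi$); using the bounded distortion of the action to control the geometry of the $Y_j$ uniformly in $j$, one gets, as in Lemma~6.4 of~\cite{Yu}, a $\Gamma$-equivariant $*$-isomorphism
$$
C^*(P_d(X),B)^\Gamma_{O}\ \cong\ \prod_{j\in J_k}\big(\mathcal{A}(H,\xi)_{O_j}\hat{\otimes}(\mathcal{A}(\mathcal{H},\eta)\rtimes_r\Gamma)\big)\hat{\otimes}K ,
$$
and likewise the localization and $\pi$-localization algebras over $O$ become the ordinary, respectively $\pi$-, localization algebras of this product, with $\iota$ acting factorwise. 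Thus it suffices to prove that, for each fixed $j$, $\iota_*\colon\lim_d K_*(C_L^*(Y_j,B)^\Gamma)\to\lim_d K_*(C_{\pi,L}^*(Y_j,B)^\Gamma)$ is an isomorphism.

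On a single fibre neighbourhood the heart of the argument is to recognise $\iota_*$ as an equivariant index map with proper coefficients. On the $\pi$-localization side a $\pi$-strong Lipschitz homotopy collapses the bounded base $B(\pi(x_j),C)$ to the point $\pi(x_j)$ equivariantly, so that the $\pi$-propagation condition becomes vacuous; hence $\lim_d K_*(C_{\pi,L}^*(Y_j,B)^\Gamma)\cong K_*(C_{b,u}(\mathbb{R}_+,C^*(Y_j,B)^\Gamma))$, and an Eilenberg swindle on the ideal of functions vanishing at $0$ --- legitimate here because, with no propagation constraint left, translation in the $\mathbb{R}_+$-variable is a $*$-endomorphism homotopic to the identity while the matricial coefficient $K$ supplies room for the infinite repeat --- identifies this with $K_*(C^*(Y_j,B)^\Gamma)\cong K_*(B\rtimes_r\Gamma)$, the isomorphism being induced by evaluation at $0$. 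Under these identifications $\iota_*$ becomes the equivariant index map $\lim_d K_*(C_L^*(Y_j,B)^\Gamma)\to K_*(C^*(Y_j,B)^\Gamma)\cong K_*(B\rtimes_r\Gamma)$ for the $\Gamma$-cocompact space $Y_j$ with coefficients $B$; since $B$ is a proper $\Gamma$-$C^*$-algebra this is a $K$-theory isomorphism --- it is the Green--Julg isomorphism, equivalently the Baum--Connes assembly map with proper coefficients, valid here in the groupoid framework of~\cite{Tu,Skan} (available because $\Gamma$ coarsely embeds into Hilbert space). Reassembling through the Mayer--Vietoris sequences and the inductive limits then proves the theorem.

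The step I expect to be the main obstacle is the reduction from the non-cocompact $\Gamma$-space $X$ to the $\Gamma$-cocompact fibre pieces: establishing the product decomposition of the combined twisted algebras over a separated region \emph{uniformly in $j$} and with the proper coefficient $\mathcal{A}(\mathcal{H},\eta)$ included is exactly where the bounded distortion hypothesis is genuinely needed, in order to keep the fibre geometry --- and hence the data inherited from the coarse embedding of $\Gamma$ --- under control independently of $j$; and one must check carefully that the Mayer--Vietoris sequences for the twisted localization and $\pi$-localization algebras are compatible with $\iota$ and with all the inductive limits over $d$ and $r$. Once the problem has been localized to a single $\Gamma$-cocompact fibre, identifying $\iota_*$ with the Green--Julg assembly map and invoking its invertibility for proper coefficients is essentially routine.
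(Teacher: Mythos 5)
Your reductions up to the separated regions $O_{r,k}$ match the paper's, but the way you finish has a genuine gap. After the separation step the algebra over $O_{r,k}$ is not the algebra of a single cocompact fibre: it is a \emph{uniform infinite product} over $j\in J_k$ (in the paper's notation $\prod^u_{j\in J_k}$ of algebras attached to the pieces $\Gamma\cdot\triangle'_j(R)$), and $K$-theory does not commute with infinite products. So your final step --- ``once the problem has been localized to a single $\Gamma$-cocompact fibre, identify $\iota_*$ with the Green--Julg/Baum--Connes assembly map with proper coefficients and invoke its invertibility'' --- does not suffice: a factorwise isomorphism provided by a non-quantitative black box (assembly with proper coefficients, or the statement that the evaluation map on $C_L^*$ with proper coefficients is an isomorphism, which is itself nontrivial and would have to be proved by essentially the same cutting argument) does not upgrade to an isomorphism for the uniform product unless the isomorphisms are implemented by uniformly controlled homotopies. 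This uniformity is exactly the content of the paper's proof: it writes $\mathcal{A}(\mathcal{H},\eta)$ as a limit of ideals $\mathcal{A}_n$ proper over cocompact spaces $\Gamma\times_{F_i}S_i$, uses properness to pass from $\Gamma$-equivariant algebras on $\Gamma\cdot\triangle'_j(R)$ to $F_i$-equivariant algebras on $\widetilde{F_i}\cdot\triangle'_j(R)$ with $F_i$ finite (as in Lemma 5.16 of Willett--Yu II), uses bounded distortion to make these sets \emph{uniformly} bounded in $j$, and then compares both $C_L^*$ and $C_{\pi,L}^*$ of the resulting uniform products of simplices $\triangle_j(S)$ to the same uniform product of twisted Roe algebras via uniform strong Lipschitz homotopies and Eilenberg swindles. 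In other words, bounded distortion is needed not just for your product decomposition but precisely to run the homotopy and swindle arguments uniformly across $j$; your plan has no substitute for this uniform step.

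Two smaller points. First, your parenthetical that the isomorphism of assembly with proper coefficients is ``available because $\Gamma$ coarsely embeds into Hilbert space'' is off: that theorem needs no such hypothesis, and in this proof the coarse embedding of $\Gamma$ enters only through the construction of the proper algebra $\mathcal{A}(\mathcal{H},\eta)$ itself. Second, your claimed identification $C^*(P_d(X),B)^\Gamma_O\cong\prod_j\bigl(\mathcal{A}(H,\xi)_{O_j}\hat{\otimes}(\mathcal{A}(\mathcal{H},\eta)\rtimes_r\Gamma)\bigr)\hat{\otimes}K$ is plausible in spirit (it is the coefficient analogue of the decomposition used in Proposition \ref{sec:3:thm:3.11}), but the paper deliberately avoids it for the localization/$\pi$-localization comparison, working instead with the uniform products described above; if you keep your route you would still need to prove a uniform, not merely fibrewise, version of the assembly isomorphism over the index set $J_k$.
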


To prove the above theorem, we need the following definitions. Recall that $\mathbb{R}_+\times H$ is endowed with the weakest topology for which the projection to $H$ is weakly continuous and the function $(t,w)\mapsto t^2+\|w\|^2$ is continuous. This topology makes $\mathbb{R}_+\times H$ into a locally compact Hausdorff space. For any open set $O\in \mathbb{R}_+\times H$, with the similar manner in section 3, we define $C_{alg}^*(P_d(X),\mathcal {A}(H,\xi)_O\hat{\otimes}\mathcal {A}(\mathcal {H},\eta))^{\Gamma}$ to be the subalgebra of $C_{alg}^*(P_d(X),\mathcal {A}(H,\xi)\hat{\otimes}\mathcal {A}(\mathcal {H},\eta))^{\Gamma}$ defined by
\begin{align*}
&C_{alg}^*(P_d(X),\mathcal {A}(H,\xi)_O\hat{\otimes}\mathcal {A}(\mathcal {H},\eta))^{\Gamma}:\\
&\hspace{10mm} =\{T\in C_{alg}^*(P_d(X),\mathcal {A}(H,\xi)\hat{\otimes}\mathcal {A}(\mathcal {H},\eta))^{\Gamma}: T(x,y)\text{\ is\ the\ finite\ sum \ of\ }\\&\hspace{10mm}T_1(x,y)\hat{\otimes} T_2(x,y)\hat{\otimes} k
\ \text{with}\ \supp(T_1(x,y))\subseteq O, \forall x,y\in X_d\}.
\end{align*}
The norm closure of $C_{alg}^*(P_d(X),\mathcal {A}(H,\xi)_O\hat{\otimes}\mathcal {A}(\mathcal {H},\eta))^{\Gamma}$ is defined to be the $C^*$-algebra  $C^*(P_d(X),\mathcal {A}(H,\xi)_O\hat{\otimes}\mathcal {A}(\mathcal {H},\eta))^{\Gamma}$. Moreover, we can define $$C_L^*(P_d(X),\mathcal {A}(H,\xi)_O\hat{\otimes}\mathcal {A}(\mathcal {H},\eta))^{\Gamma}\ \text{and}\ C_{\pi,L}^*(P_d(X),\mathcal {A}(H,\xi)_O\hat{\otimes}\mathcal {A}(\mathcal {H},\eta))^{\Gamma}$$ by similar method as in section 3.

\begin{proof}[{\bf Proof of Theorem \ref{sec:4:thm:4.8}.}] For any $r>0$, let $O_r=\bigcup\limits_{\pi(x)\in X/\Gamma}B_{\mathbb{R}_+\times H }(\xi(\pi(x)),r)$, where $B_{\mathbb{R}_+\times H }(\xi(\pi(x)),r)
:=\left\{(s,h)\in\mathbb{R}_+\times H:s^2+\|h-\xi(\pi(x))\|^2<r^2\right\}$. It is sufficient to prove that  \begin{align*}
\iota_*:&\lim\limits_{d\rightarrow \infty}K_*(C_L^*(P_d(X), \mathcal {A}(H,\xi)_{O_r}\hat{\otimes}\mathcal {A}(\mathcal {H},\eta))^\Gamma)\\
&\hspace{55mm}\rightarrow \lim\limits_{d\rightarrow \infty}K_*(C_{\pi,L}^*(P_d(X), \mathcal {A}(H,\xi)_{O_r}\hat{\otimes}\mathcal {A}(\mathcal {H},\eta))^{\Gamma})
\end{align*}
is an isomorphism.

Let $Y$ be a fundamental domain such that the $\Gamma$-action on $X$ has bounded distortion. Since $X$ has bounded geometry and $\Gamma$ acts on $X$ properly and isometrically, then $\widetilde{X}=X/\Gamma$ is a bounded geometrical metric space. So there exists finitely many mutually disjoint subsets of $\widetilde{X}$, say $\widetilde{X}_k:=\{\pi(x_j):j\in J_k, x_j\in Y\}$ with some index set $J_k$ for $k=1,2,\cdots, k_0$, such that $\widetilde{X}=\bigsqcup\limits_{k=1}^{k_0}\widetilde{X_k}$ and for $\pi(x_i),\pi(x_j)\in \widetilde{X}_k$, $d(\pi(x_i),\pi(x_j))$ is large enough such that $d(\xi(\pi(x_i)), \xi(\pi(x_j)))> 2r$, where $\pi:X\rightarrow X/\Gamma$ is the quotient map.

For each $k=1,2,\cdots, k_0$, we denote
$$O_{r,k}=\bigcup\limits_{j\in J_k}\big(B(\xi(\pi(x_j)), r)\big).$$
We will complete the proof of this theorem once we proved that
\begin{align*}
&\lim\limits_{d\rightarrow \infty}K_*(C_{L}^*(P_d(X),\mathcal {A}(H,\xi)_{O_{r,k}}\hat{\otimes}\mathcal {A}(\mathcal {H},\eta))^\Gamma)\\
&\hspace{48mm}\cong \lim\limits_{d\rightarrow \infty}K_*(C_{\pi,L}^*(P_d(X),\mathcal {A}(H,\xi)_{O_{r,k}}\hat{\otimes}\mathcal {A}(\mathcal {H},\eta))^\Gamma) \ \ \ \ (\ast)
\end{align*}
by using the Mayer-Vietoris sequence argument as Theorem 6.8 in~\cite{Yu}.

Since $\mathcal {A}(\mathcal {H},\eta)$ is a $\Gamma$-proper $C^*$-algebra, so it can be a direct limit of ideals $\mathcal {A}_n$, each of which is $\Gamma$-proper over some cocompact $\Gamma$-space. For each $n$, suppose $\mathcal{A}_n$ is $\Gamma$-proper over the cocompact $\Gamma$-space $W$. Since $\Gamma$ acts on $W$ properly and cocompactly, so $W$ can be covered by finite $\Gamma$-spaces of the form $\Gamma\times_{F_i} S_i$, where $S_i$ is a bounded subset of $W$, $F_i$ is a finite subgroup of $\Gamma$ and $1\leq i\leq n_0$. So to prove ($\ast$), it is sufficient to prove that for each $n$,
$$\lim\limits_{d\rightarrow \infty}K_*(C_{L}^*(P_d(X),\mathcal {A}(H,\xi)_{O_{r,k}}\hat{\otimes}\mathcal {A}_n)^\Gamma)
\cong \lim\limits_{d\rightarrow \infty}K_*(C_{\pi,L}^*(P_d(X),\mathcal {A}(H,\xi)_{O_{r,k}}\hat{\otimes}\mathcal {A}_n)^\Gamma).$$
By using the Mayer-Vietoris sequence argument, we only need to prove that for each $1\leq i\leq n_0$,
\begin{align*}
&\lim\limits_{d\rightarrow \infty}K_*(C_{L}^*(P_d(X),\mathcal {A}(H,\xi)_{O_{r,k}}\hat{\otimes}(\mathcal {A}_n)_{\Gamma\times_{F_i} S_i})^\Gamma)\\
&\hspace{50mm}\rightarrow \lim\limits_{d\rightarrow \infty}K_*(C_{\pi,L}^*(P_d(X),\mathcal {A}(H,\xi)_{O_{r,k}}\hat{\otimes}(\mathcal {A}_n)_{\Gamma\times_{F_i} S_i})^\Gamma)
\end{align*} is isomorphic.

Let $Y_k=\{x_j\in Y: j\in J_k,\pi(x_j)\in \widetilde{X}_k\}$. For any $R>0$, let 
$$\triangle'_j(R):=\{x\in P_d(X):\exists x_j\in Y_k,d(x,x_j)\leq R\}.$$ 
Let $$\prod^u\limits_{j\in J_k}C_{alg}^*(\Gamma\cdot\triangle'_j(R), \mathcal {A}(H,\xi)_{O_{r,k}}\hat{\otimes}(\mathcal {A}_n)_{\Gamma\times_{F_i} S_i})^\Gamma)$$ be the $*$-algebra such that for any $$\bigoplus\limits_{j\in J_k} T_j\in \prod^u\limits_{j\in J_k}C_{alg}^*(\Gamma\cdot\triangle'_j(R), \mathcal {A}(H,\xi)_{O_{r,k}}\hat{\otimes}(\mathcal {A}_n)_{\Gamma\times_{F_i} S_i})^\Gamma),$$ the propagations of $T_j$ are uniformly bounded. Define $$\prod^u\limits_{j\in J_k} C_L^*(\Gamma\cdot\triangle'_j(R), \mathcal {A}(H,\xi)_{O_{r,k}}\hat{\otimes}(\mathcal {A}_n)_{\Gamma\times_{F_i} S_i})^\Gamma)$$ to be the supremum norm closure of the algebra $$\Big\{\bigoplus\limits_{j\in J_k}b_j:[0,+\infty)\rightarrow \prod^u\limits_{j\in J_k}C_{alg}^*(\Gamma\cdot\triangle'_j(R), \mathcal {A}(H,\xi)_{O_{r,k}}\hat{\otimes}(\mathcal {A}_n)_{\Gamma\times_{F_i} S_i})^\Gamma)\Big\}$$
such that $\bigoplus\limits_{j\in J_k}b_j$ is bounded, uniformly continuous  and $\sup\limits_{j\in J_k}\{d(x,y):b_j(t)(x,y)\neq 0\}\rightarrow 0$ as $t\rightarrow \infty$.
Using the similar argument as Lemma 6.4 in \cite{Yu}, we can prove
\begin{align*}
&C_{L}^*(P_d(X),\mathcal {A}(H,\xi)_{O_{r,k}}\hat{\otimes}(\mathcal {A}_n)_{\Gamma\times_{F_i} S_i})^\Gamma\\
&\hspace{50mm}\cong\lim\limits_{R\rightarrow \infty}\prod^u\limits_{j\in J_k} C_L^*(\Gamma\cdot\triangle'_j(R), \mathcal {A}(H,\xi)_{O_{r,k}}\hat{\otimes}(\mathcal {A}_n)_{\Gamma\times_{F_i} S_i})^\Gamma.
\end{align*}
Since $\mathcal {A}_n$ is proper over $\Gamma\times_{F_i} S_i$, i.e. $C_0(\Gamma\times_{F_i} S_i)\mathcal {A}_n$ is dense in $\mathcal {A}_n$, 
it is not very difficult to prove that
\begin{align*}
&\lim\limits_{R\rightarrow \infty}\prod^u\limits_{j\in J_k} C_L^*(\Gamma\cdot\triangle'_j(R), \mathcal {A}(H,\xi)_{O_{r,k}}\hat{\otimes}(\mathcal {A}_n)_{\Gamma\times_{F_i} S_i})^\Gamma\\
&\hspace{50mm}\cong\lim\limits_{R\rightarrow \infty}\prod^u\limits_{j\in J_k} C_L^*(\widetilde{F_i}\cdot\triangle'_j(R), \mathcal {A}(H,\xi)_{O_{r,k}}\hat{\otimes}(\mathcal {A}_n)_{S_i})^{F_i}
\end{align*} 
in a way similar to the proof of Lemma 5.16 in \cite{Ruffus2}, where $\widetilde{F_i}$ is a finite set of $\Gamma$.

Since the $\Gamma$-action on $X$ has bounded distorsion, it is easy to prove that for any $R>0$, $\widetilde{F_i}\cdot\triangle'_j(R)$ is uniformly bounded for $j\in J_k$. If we let $\triangle_j$ be the simplex spanned by $F_i\cdot x_j$ with $x_j\in Y$, $x_j^0$ be the barycenter of $\triangle_j$. Let $\triangle_j(S)$ be the simplex with vertices $\{x\in P_d(X):d(x,x^0_j)\leq S\}$ for $d>S$, then
\begin{align*}
&\lim\limits_{d\rightarrow \infty}C_{L}^*(P_d(X),\mathcal {A}(H,\xi)_{O_{r,k}}\hat{\otimes}(\mathcal {A}_n)_{\Gamma\times_{F_i} S_i})^\Gamma\\
&\hspace{50mm}\cong\lim\limits_{S\rightarrow \infty}\prod^u\limits_{j\in J_k} C_L^*(\triangle_j(S), \mathcal {A}(H,\xi)_{O_{r,k}}\hat{\otimes}(\mathcal {A}_n)_{S_i})^{F_i}.
\end{align*}
Similarly we can prove
\begin{align*}
&\lim\limits_{d\rightarrow \infty}C_{\pi,L}^*(P_d(X),\mathcal {A}(H,\xi)_{O_{r,k}}\hat{\otimes}(\mathcal {A}_n)_{\Gamma\times_{F_i} S_i})^\Gamma\\
&\hspace{50mm}\cong\lim\limits_{S\rightarrow \infty}\prod^u\limits_{j\in J_k} C_{\pi,L}^*(\triangle_j(S), \mathcal {A}(H,\xi)_{O_{r,k}}\hat{\otimes}(\mathcal {A}_n)_{S_i})^{F_i}.
\end{align*}

When $S$ is big enough, $\triangle_j\subseteq \triangle_j(S)$ and $\triangle_j(S)$ is $F_i$-invariant for any $j\in J_k$. So $\{\triangle_j(S)\}_{j\in J_k}$ is $F_i$-equivariant strong Lipschitz homotopy equivalent to $\{x_j^0\}_{j\in J_k}$(\cite{Yu}, P. 218-219). With the essentially same proof of the non-equivariant case(\cite{Yu}, Lemma 6.5), we have
\begin{align*}
&K_*(\prod^u\limits_{j\in J_k} C_{L,0}^*(\triangle_j(S), \mathcal {A}(H,\xi)_{O_{r,k}}\hat{\otimes}(\mathcal {A}_n)_{S_i})^{F_i})\\
&\hspace{60mm}\cong K_*(\prod^u\limits_{j\in J_k}C_{L,0}^*(x^0_j, \mathcal {A}(H,\xi)_{O_{r,k}}\hat{\otimes}(\mathcal {A}_n)_{S_i})^{F_i}).
\end{align*}
However, by the Eilenberg swindle argument, we know that $$K_*(\prod^u\limits_{j\in J_k}C_{L,0}^*(x^0_j, \mathcal {A}(H,\xi)_{O_{r,k}}\hat{\otimes}(\mathcal {A}_n)_{S_i})^{F_i})=0.$$ Thus $$K_*(\prod^u\limits_{j\in J_k} C_{L,0}^*(\triangle_j(S), \mathcal {A}(H,\xi)_{O_{r,k}}\hat{\otimes}(\mathcal {A}_n)_{S_i})^{F_i})=0.$$
Similarly, we can prove that for any $S>0$ which is sufficient big, $$K_*(\prod^u\limits_{j\in J_k} C_{\pi,L,0}^*(\triangle_j(S), \mathcal {A}(H,\xi)_{O_{r,k}}\hat{\otimes}(\mathcal {A}_n)_{S_i})^{F_i})=0.$$
By the six term exact sequence of $C^*$-algebra K-theory, we know that
\begin{align*}
&\lim\limits_{S\rightarrow \infty}K_*( \prod^u\limits_{j\in J_k} C_{L}^*(\triangle_j(S), \mathcal {A}(H,\xi)_{O_{r,k}}\hat{\otimes}(\mathcal {A}_n)_{S_i})^{F_i})\\
&\hspace{50mm}\cong \lim\limits_{S\rightarrow \infty}K_*(\prod^u\limits_{j\in J_k} C^*(\triangle_j(S), \mathcal {A}(H,\xi)_{O_{r,k}}\hat{\otimes}(\mathcal {A}_n)_{S_i})^{F_i}),
\end{align*}
\begin{align*}
&\lim\limits_{S\rightarrow \infty}K_*( \prod^u\limits_{j\in J_k} C_{\pi,L}^*(\triangle_j(S), \mathcal {A}(H,\xi)_{O_{r,k}}\hat{\otimes}(\mathcal {A}_n)_{S_i})^{F_i})\\
&\hspace{50mm}\cong \lim\limits_{S\rightarrow \infty}K_*(\prod^u\limits_{j\in J_k} C^*(\triangle_j(S), \mathcal {A}(H,\xi)_{O_{r,k}}\hat{\otimes}(\mathcal {A}_n)_{S_i})^{F_i}).
\end{align*}
So we have proved that
\begin{align*}
&\lim\limits_{S\rightarrow \infty}K_*( \prod^u\limits_{j\in J_k} C_{L}^*(\triangle_j(S), \mathcal {A}(H,\xi)_{O_{r,k}}\hat{\otimes}(\mathcal {A}_n)_{S_i})^{F_i})\\
&\hspace{50mm}\cong \lim\limits_{S\rightarrow \infty}K_*( \prod^u\limits_{j\in J_k} C_{\pi,L}^*(\triangle_j(S), \mathcal {A}(H,\xi)_{O_{r,k}}\hat{\otimes}(\mathcal {A}_n)_{S_i})^{F_i}).
\end{align*}
Thus for each $1\leq i\leq n_0$, we have
\begin{align*}
&\lim\limits_{d\rightarrow \infty}K_*(C_{L}^*(P_d(X),\mathcal {A}(H,\xi)_{O_{r,k}}\hat{\otimes}(\mathcal {A}_n)_{\Gamma\times_{F_i}S_i})^\Gamma)\\
&\hspace{50mm}\cong \lim\limits_{d\rightarrow \infty}K_*(C_{\pi,L}^*(P_d(X),\mathcal {A}(H,\xi)_{O_{r,k}}\hat{\otimes}(\mathcal {A}_n)_{\Gamma\times_{F_i}S_i})^\Gamma).
\end{align*}
So we have finished the proof of the theorem.
\end{proof}

Now we are ready to prove the following theorem:
\begin{thm} \label{sec:4:thm:4.10}
Let $X$ be a $\Gamma$-space with bounded distortion. If $X/\Gamma$ and $\Gamma$ are coarse embeddable into Hilbert spaces, then the following map
$$
\tau_*:\lim\limits_{d\rightarrow \infty}K_*(C_L^*(P_d(X), \mathcal {A}(\mathcal {H},\eta))^\Gamma)
\rightarrow \lim\limits_{d\rightarrow \infty}K_*(C_{\pi,L}^*(P_d(X), \mathcal {A}(\mathcal {H},\eta))^{\Gamma})
$$
is injective.
\end{thm}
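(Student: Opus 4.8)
The plan is to deduce Theorem~\ref{sec:4:thm:4.10} from Theorem~\ref{sec:4:thm:4.8} by interposing a Bott map built from the coarse embedding $\xi\colon X/\Gamma\to H$, in exactly the way the Bott map was built in the proof of Theorem~\ref{sec:3:thm:3.12}. For each $d>0$ and each $t\geq 1$ one defines, entrywise,
$$(\beta_t(g\,\hat{\otimes}\,T))(x,y)=\big(\beta(\pi(x))\big)(g_t)\,\hat{\otimes}\,T(x,y),$$
for $g\in\mathcal{S}$ and $T\in C_{L,alg}^*(P_d(X),\mathcal{A}(\mathcal{H},\eta))^\Gamma$, where $g_t(s)=g(t^{-1}s)$, where $\beta(\pi(x))\colon\mathcal{S}\to\mathcal{A}(H,\xi)$ is the $*$-homomorphism of Definition~\ref{clif-alg}, and where now $T(x,y)\in\mathcal{A}(\mathcal{H},\eta)\hat{\otimes}K$, so that $\beta(\pi(x))(g_t)\hat{\otimes}T(x,y)\in\mathcal{A}(H,\xi)\hat{\otimes}\mathcal{A}(\mathcal{H},\eta)\hat{\otimes}K$. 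The same formula, under the $\pi$-localization condition on $T$, defines $(\beta_{\pi,L})_t$. As in the proof of Theorem~\ref{sec:3:thm:3.12} (cf.\ Lemma~7.6 in~\cite{Yu} or Lemma~5.8 in~\cite{Fu}) these families extend to asymptotic morphisms
$$\beta_L\colon\mathcal{S}\hat{\otimes}C_{L}^*(P_d(X),\mathcal{A}(\mathcal{H},\eta))^\Gamma\rightarrow C_{L}^*(P_d(X),\mathcal{A}(H,\xi)\hat{\otimes}\mathcal{A}(\mathcal{H},\eta))^\Gamma$$
and
$$\beta_{\pi,L}\colon\mathcal{S}\hat{\otimes}C_{\pi,L}^*(P_d(X),\mathcal{A}(\mathcal{H},\eta))^\Gamma\rightarrow C_{\pi,L}^*(P_d(X),\mathcal{A}(H,\xi)\hat{\otimes}\mathcal{A}(\mathcal{H},\eta))^\Gamma,$$
inducing $(\beta_L)_*$ and $(\beta_{\pi,L})_*$ on $K$-theory. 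Since $\beta_L$ and $\beta_{\pi,L}$ touch only the $\xi$-coordinate while $\iota$ and $\tau$ are the evident inclusions, the square
$$\xymatrix{
\lim\limits_{d\to\infty}K_*(C_{L}^*(P_d(X),\mathcal{A}(\mathcal{H},\eta))^\Gamma)\ar[d]_{(\beta_L)_*}\ar[r]^{\tau_*} & \lim\limits_{d\to\infty}K_*(C_{\pi,L}^*(P_d(X),\mathcal{A}(\mathcal{H},\eta))^\Gamma)\ar[d]^{(\beta_{\pi,L})_*} \\
\lim\limits_{d\to\infty}K_*(C_{L}^*(P_d(X),\mathcal{A}(H,\xi)\hat{\otimes}\mathcal{A}(\mathcal{H},\eta))^\Gamma)\ar[r]^{\iota_*} & \lim\limits_{d\to\infty}K_*(C_{\pi,L}^*(P_d(X),\mathcal{A}(H,\xi)\hat{\otimes}\mathcal{A}(\mathcal{H},\eta))^\Gamma)
}$$
commutes, i.e.\ $\iota_*\circ(\beta_L)_*=(\beta_{\pi,L})_*\circ\tau_*$ after the usual identification.

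The heart of the argument is the claim that $(\beta_L)_*$ is an isomorphism (only injectivity is needed below, but the isomorphism statement is proved by the same method). This is the analogue of the isomorphism statement inside the proof of Theorem~\ref{sec:3:thm:3.12}, now with the fixed auxiliary $C^*$-algebra $\mathcal{A}(\mathcal{H},\eta)$ sitting passively in every matrix coefficient. I would prove it by induction on the dimension of the skeleta of $P_d(X)$. First one checks that the $\pi$-strong Lipschitz homotopy invariance of Proposition~\ref{sec:3:prop:3.4}, and the corresponding ordinary strong Lipschitz homotopy invariance for the $C_L^*$-algebras, remain valid with coefficients in $\mathcal{A}(\mathcal{H},\eta)$ and in $\mathcal{A}(H,\xi)\hat{\otimes}\mathcal{A}(\mathcal{H},\eta)$; this is because the equivariant isometries used there act only on the $\ell^2$-part and leave the coefficients untouched. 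Hence one gets Mayer--Vietoris sequences as in Proposition~\ref{sec:3;prop:3.5} for all four algebras in the square, and these reduce the claim for $(\beta_L)_*$ to the $0$-skeleton. There $P_d(X)$ is the discrete set $X$; after the reduction of the twisted algebras to $\prod$-type algebras over orbit pieces of the form $\Gamma\times_F S$ (as in the proof of Theorem~\ref{sec:4:thm:4.8}), the Bott map becomes, piece by piece, the Higson--Kasparov--Trout Bott periodicity isomorphism $\mathcal{S}\hat{\otimes}B\to\mathcal{A}(W)\hat{\otimes}B$ for the finite-dimensional Euclidean subspaces $W=W_k(\pi(x))\subseteq H$ with $B=\mathcal{A}(\mathcal{H},\eta)\hat{\otimes}K$ (see~\cite{HKT}), with the appropriate finite-group equivariance. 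Bott periodicity then propagates the isomorphism up through all the skeleta.

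Granting this, the proof concludes quickly: by Theorem~\ref{sec:4:thm:4.8} the map $\iota_*$ is an isomorphism, and $(\beta_L)_*$ is an isomorphism by the previous paragraph, so the composite $\iota_*\circ(\beta_L)_*$ is injective; by commutativity of the square, $(\beta_{\pi,L})_*\circ\tau_*$ is injective, and therefore $\tau_*$ is injective.

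The step I expect to be the main obstacle is the middle one: verifying that the whole Section~3 apparatus (strong Lipschitz homotopy invariance, Mayer--Vietoris, the reduction of the twisted localization algebras to products over orbit pieces, and the base-case Bott periodicity) goes through once the $\Gamma$-proper coefficient algebra $\mathcal{A}(\mathcal{H},\eta)$---which is itself only a direct limit of ideals that are proper over cocompact $\Gamma$-spaces---is inserted, and carefully keeping track of that direct limit when passing to $K$-theory. This should be routine, but it is the place where care is required, exactly as in the proof of Theorem~\ref{sec:4:thm:4.8}.
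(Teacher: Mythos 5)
Your proposal is correct and follows essentially the same route as the paper: the same Bott map $\beta_t(g\hat{\otimes}T)(x,y)=(\beta(\pi(x)))(g_t)\hat{\otimes}T(x,y)$ extended to asymptotic morphisms on the twisted localization and $\pi$-localization algebras, the same commutative square intertwining $\tau_*$ with $\iota_*$, the isomorphism of $(\beta_L)_*$ via Mayer--Vietoris, the five lemma, induction on skeleta and Bott periodicity, and then injectivity of $\tau_*$ from Theorem~\ref{sec:4:thm:4.8}. The extra care you flag about carrying the $\Gamma$-proper coefficient algebra $\mathcal{A}(\mathcal{H},\eta)$ through the Section~3 machinery is exactly what the paper leaves implicit.
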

\begin{proof}
For any $d>0$, choose a countable $\Gamma$-invariant dense subset $X_d$ of $P_d(X)$ for each $d>0$ such that $X_{d_1}\subseteq X_{d_2}$ if $d_1\leq d_2$. For each $t\geq 1$, define a map
$$\beta_t:\mathcal {S}\hat{\otimes}C_{alg}^*(P_d(X), \mathcal {A}(\mathcal {H},\eta))^\Gamma\rightarrow C^*(P_d(X),\mathcal {A}(H,\xi)\hat{\otimes}\mathcal {A}(\mathcal {H},\eta))^\Gamma$$ by$$(\beta_t(g\hat{\otimes }T))(x,y)=(\beta(\pi(x)))(g_t)\hat{\otimes}T(x,y)$$ for all $g\in \mathcal {S}$ and $T\in C_{alg}^*(P_d(X), \mathcal {A}(\mathcal {H},\eta))^\Gamma$, where $g_t(s)=g(t^{-1}s)$ for all $s\in \mathbb{R}$, and $\beta(\pi(x)):\mathcal {S}=\mathcal {A}(\xi(\pi(x)))\rightarrow \mathcal {A}(H,\xi)$ is the $*$-homomorphism from the Definition \ref{clif-alg}.

Similarly, we can define
$$(\beta_{L})_t:\mathcal {S}\hat{\otimes}C_{L,alg}^*(P_d(X), \mathcal {A}(\mathcal {H},\eta))^\Gamma\rightarrow C_{L}^*(P_d(X),\mathcal {A}(H,\xi)\hat{\otimes}\mathcal {A}(\mathcal {H},\eta))^\Gamma,$$
$$(\beta_{\pi,L})_t:\mathcal {S}\hat{\otimes}C_{\pi,L,alg}^*(P_d(X), \mathcal {A}(\mathcal {H},\eta))^\Gamma\rightarrow C_{\pi,L}^*(P_d(X),\mathcal {A}(H,\xi)\hat{\otimes}\mathcal {A}(\mathcal {H},\eta))^\Gamma.$$
The maps $\beta_t$, $(\beta_{L})_t$ and $(\beta_{\pi,L})_t$ can be extended to asymptotic morphisms (cf. Lemma 7.6 in~\cite{Yu} or Lemma 5.8 in~\cite{Fu})
$$\beta: \mathcal {S}\hat{\otimes}C^*(P_d(X), \mathcal {A}(\mathcal {H},\eta))^\Gamma \rightarrow C^*(P_d(X), \mathcal {A}(H,\xi)\hat{\otimes}\mathcal {A}(\mathcal {H},\eta))^\Gamma;$$
$$\beta_L: \mathcal {S}\hat{\otimes}C_L^*(P_d(X), \mathcal {A}(\mathcal {H},\eta))^\Gamma \rightarrow C_L^*(P_d(X), \mathcal {A}(H,\xi)\hat{\otimes}\mathcal {A}(\mathcal {H},\eta))^\Gamma;$$
$$\beta_{\pi,L}: \mathcal {S}\hat{\otimes}C_{\pi,L}^*(P_d(X), \mathcal {A}(\mathcal {H},\eta))^\Gamma \rightarrow C_{\pi,L}^*(P_d(X), \mathcal {A}(H,\xi)\hat{\otimes}\mathcal {A}(\mathcal {H},\eta))^\Gamma.$$
So they induce the homomorphisms $$(\beta)_*: K_*(C^*(P_d(X), \mathcal {A}(\mathcal {H},\eta))^\Gamma) \rightarrow K_*(C^*(P_d(X), \mathcal {A}(H,\xi)\hat{\otimes}\mathcal {A}(\mathcal {H},\eta))^\Gamma);$$
 $$(\beta_L)_*: K_*(C_L^*(P_d(X), \mathcal {A}(\mathcal {H},\eta))^\Gamma) \rightarrow K_*(C_L^*(P_d(X), \mathcal {A}(H,\xi)\hat{\otimes}\mathcal {A}(\mathcal {H},\eta))^\Gamma);$$
$$(\beta_{\pi,L})_*: K_*(C_{\pi,L}^*(P_d(X), \mathcal {A}(\mathcal {H},\eta))^\Gamma) \rightarrow K_*(C_{\pi,L}^*(P_d(X), \mathcal {A}(H,\xi)\hat{\otimes}\mathcal {A}(\mathcal {H},\eta))^\Gamma).$$
And we have the following commutative diagram:$$\xymatrix{
 \lim\limits_{d\rightarrow \infty}K_*(C_L^*(P_d(X), \mathcal {A}(\mathcal {H},\eta))^\Gamma)\ar[d]_{\tau_*} \ar[r]^{(\beta_L)_*\ \ \ \ \ \ }  & \lim\limits_{d\rightarrow \infty}K_*(C_L^*(P_d(X), \mathcal {A}(H,\xi)\hat{\otimes}\mathcal {A}(\mathcal {H},\eta))^\Gamma) \ar[d]_{\iota_*}  \\
  \lim\limits_{d\rightarrow \infty}K_*(C_{\pi,L}^*(P_d(X), \mathcal {A}(\mathcal {H},\eta))^\Gamma)  \ar[r]^{(\beta_{\pi,L})_*\ \ \ \ \ \ } &    \lim\limits_{d\rightarrow \infty}K_*(C_{\pi,L}^*(P_d(X), \mathcal {A}(H,\xi)\hat{\otimes}\mathcal {A}(\mathcal {H},\eta))^\Gamma).}$$
It is not difficult to prove that $(\beta_L)_*$ is an isomorphism by the Mayer-Vietoris sequence and five lemma argument and induction on the dimension of the skeletons of $P_d(X)$, together with the Bott periodicity. Since $\iota_*$ is also an isomorphism by Theorem \ref{sec:4:thm:4.8}, so $\tau_*$ is injective.
\end{proof}

\section{The Main Theorem}

In this section, we shall introduce a Bott map from the $K$-theory of the equivariant localization algebra $C_L^*(P_d(X), C(Z))^\Gamma$  to the $K$-theory of the equivariant twisted localization algebra $C_L^*(P_d(X), \mathcal {A}(\mathcal {H}))^\Gamma$, where $Z$ and $\mathcal {A}(\mathcal {H})$ are defined in Section 4. We prove that this map is an isomorphism. This, together with the main result in section 3, implies the main theorem.

For any $d\geq 0$, choose a countable $\Gamma$-invariant dense subset $X_d$ of $P_d(X)$ for each $d>0$ such that $X_{d_1}\subseteq X_{d_2}$ if $d_1\leq d_2$. Let $Y_d$ be a fundamental domain of $X_d$. Let $C^*(P_d(X),C(Z))^\Gamma$ be a $C^*$-algebra defined similarly to Roe algebra $C^*(P_d(X))^\Gamma$ but with $C(Z)\hat{\otimes }K(H)$ as the matrix coefficients.
For each $t\geq 1$, define a map
$$\beta_t:\mathcal {S}\hat{\otimes}C_{alg}^*(P_d(X),C(Z))^\Gamma\rightarrow C^*(P_d(X),\mathcal {A}(\mathcal {H}))^\Gamma$$ by
$$\left[(\beta_t(g\hat{\otimes }T))(x,y)\right](z)=\frac{1}{|\Gamma_x|}\sum\limits_{\gamma\in \Gamma_x}\gamma\cdot(\beta^z (g_t)\hat{\otimes}T(x,y)(z))$$
with $x\in Y_d, y\in X_d$ such that
$$\left[(\beta_t(g\hat{\otimes }T))(\gamma x,\gamma y)\right](\gamma z)=\gamma\cdot(\beta^z (g_t)\hat{\otimes}T(x,y)(z))$$
for all $g\in \mathcal {S}$, $T\in C_{alg}^*(P_d(X), C(Z))^\Gamma$, where $|\Gamma_x|$ is the element number of the stabiliser $\Gamma_x$,  $g_t(s)=g(t^{-1}s)$ for all $s\in \mathbb{R}$, and $\beta^z:\mathcal {S}\rightarrow \mathcal {A}(H_z)$ is the $*$-homomorphism defined by $f(x)\mapsto f(X\hat{\otimes}1+1\hat{\otimes}C)$ for $f(x)\in \mathcal {S}=C_0(\mathbb{R})$, where $X$ and $C$ should be viewed as unbounded degree one multiplier on
$\mathcal {S}$ and $\mathcal {A}(H_z)$, $f(X\hat{\otimes}1+1\hat{\otimes}C)$ is defined by functional calculus. With the continuous field structure of
$(H_z)_{z\in Z}$ and $\mathcal {A}(\mathcal {H})$,
there is a homomorphism $\beta: C(Z)\hat{\otimes}\mathcal {S}\longrightarrow \mathcal {A}(\mathcal {H})$ which is defined by using $\beta^z$ pointwisely.
It induces an isomorphism $\beta_*:K_*(C(Z)\hat{\otimes}\mathcal {S})\rightarrow K_*(\mathcal {A}(\mathcal {H}))$, which is a field version of the Higson-Kasparov-Trout's Bott Periodicity Theorem (see~\cite{HKT} or~\cite{Tu}).
%More introduction about this isomorphism can be founded in \cite{}.

Applying the above maps pointwise, we can define $$(\beta_L)_t:\mathcal {S}\hat{\otimes}C_{L,alg}^*(P_d(X),C(Z))^\Gamma\rightarrow C_L^*(P_d(X),\mathcal {A}(\mathcal {H}))^\Gamma.$$ We know that $\beta_t$, $(\beta_L)_t$ extend to asymptotic morphisms
$$\beta: \mathcal {S}\hat{\otimes}C^*(P_d(X),C(Z))^\Gamma \rightarrow C^*(P_d(X), \mathcal {A}(\mathcal {H}))^\Gamma;$$
$$\beta_L: \mathcal {S}\hat{\otimes}C_L^*(P_d(X),C(Z))^\Gamma \rightarrow C_L^*(P_d(X), \mathcal {A}(\mathcal {H}))^\Gamma.$$ Then they induce homomorphisms at $K$-theory level
$$\beta_*:K_*(C^*(P_d(X),C(Z))^\Gamma)\rightarrow K_*(C^*(P_d(X), \mathcal {A}(\mathcal {H}))^\Gamma)$$ and $$(\beta_L)_*:K_*(C_L^*(P_d(X),C(Z))^\Gamma)\rightarrow K_*(C_L^*(P_d(X), \mathcal {A}(\mathcal {H}))^\Gamma).$$ Similarly, we can define a homomorphism $$(\beta_{\pi,L})_*:K_*(C_{\pi,L}^*(P_d(X),C(Z))^\Gamma)\rightarrow K_*(C_{\pi,L}^*(P_d(X), \mathcal {A}(\mathcal {H}))^\Gamma).$$

\begin{thm}\label{sec:5:thm:5.1}
For any $d>0$, $$(\beta_L)_*:K_*(C_L^*(P_d(X),C(Z))^\Gamma)\rightarrow K_*(C_L^*(P_d(X), \mathcal {A}(\mathcal {H},\eta))^\Gamma)$$ is an isomorphism.
\end{thm}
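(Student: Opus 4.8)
The plan is to run the standard Mayer--Vietoris and induction--on--skeleta machine for localization algebras, combined with strong Lipschitz homotopy invariance and an Eilenberg swindle, so as to reduce the statement to equivariant Bott periodicity over a single point with finite stabilizer. The only genuinely new input beyond the arguments already used in Sections 3 and 4 is that, at the final reduction, the two coefficient algebras $C(Z)$ and $\mathcal{A}(\mathcal{H},\eta)$ are honestly different, so the base case is the (equivariant) Higson--Kasparov--Trout--Tu Bott periodicity theorem rather than a tautology as in the proof of Theorem~\ref{sec:4:thm:4.8}.

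First I would record that the two functors $C_L^*(-,C(Z))^\Gamma$ and $C_L^*(-,\mathcal{A}(\mathcal{H},\eta))^\Gamma$ on $\Gamma$-invariant subcomplexes of $P_d(X)$ enjoy exactly the structural properties established in Section 3: a Mayer--Vietoris six-term exact sequence for a $\Gamma$-invariant decomposition $P_d(X)=Y_1\cup Y_2$ into subcomplexes, proved by the same argument as Proposition~\ref{sec:3;prop:3.5} (the coefficient algebra is inert there), and invariance of $K$-theory under $\Gamma$-equivariant strong Lipschitz homotopy equivalence, as in Proposition~\ref{sec:3:prop:3.4}. The Bott map $\beta_L$ is, by its very construction, local in the $P_d(X)$ variable: it restricts to subcomplexes and is natural for the strong Lipschitz homotopies used below, hence induces a morphism between the two Mayer--Vietoris sequences. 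Given this, the five lemma lets one descend the skeleta of $P_d(X)$, which is finite dimensional since $X$ has bounded geometry; the inductive step is the usual one, namely a $\Gamma$-invariant neighbourhood of the closed $k$-simplices together with the $(k-1)$-skeleton, whose intersection $\Gamma$-equivariantly retracts into the $(k-1)$-skeleton (cf.~\cite{Yu97,Yu}). Thus it suffices to treat a $\Gamma$-invariant subcomplex of the form $\bigsqcup_j \Gamma\cdot\Delta_j$ with the $\Delta_j$ simplices with finite stabilizers $F_j$; contracting each $\Delta_j$ to its barycenter $x_j^0$ (cf.~\cite{Yu}, pp.~218--219) and splitting the algebras over orbit representatives exactly as in the proof of Theorem~\ref{sec:4:thm:4.8}, one is reduced to comparing, through $\beta_L$, the algebras $C_L^*(\{x_j^0\},C(Z))^{F_j}$ and $C_L^*(\{x_j^0\},\mathcal{A}(\mathcal{H},\eta))^{F_j}$ with $F_j$ finite. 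Note that here $C(Z)$ and $\mathcal{A}(\mathcal{H},\eta)$ depend only on $\Gamma$, so the splitting over orbit representatives needs no distortion hypothesis, in contrast with the proof of Theorem~\ref{sec:4:thm:4.8}.

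For the base case, fix a point with finite stabilizer $F$ and let $B$ denote either $C(Z)$ or $\mathcal{A}(\mathcal{H},\eta)$. Over a point the propagation condition is vacuous, so $C_L^*(\mathrm{pt},B)^F$ is the algebra of bounded, uniformly continuous functions on $[0,\infty)$ valued in $(B\hat{\otimes}K)^F$, and the ideal of those vanishing at $0$ has trivial $K$-theory by the Eilenberg swindle (translation towards $+\infty$), exactly as for the algebras $A_{L,0}^*$ in the proof of Proposition~\ref{sec:3:thm:3.11}; hence evaluation at $0$ induces an isomorphism $K_*(C_L^*(\mathrm{pt},B)^F)\cong K_*((B\hat{\otimes}K)^F)\cong K_*(B\rtimes F)$, the last step by untwisting the regular representation $\ell^2(F)$ built into the admissible module. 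Under these identifications, together with the graded $KK$-equivalence $\mathcal{S}\sim\mathbb{C}$ implicit in the definition of $(\beta_L)_*$, and because $\beta_L$ is compatible with evaluation at $0$, the map $(\beta_L)_*$ becomes the map induced by $\beta\rtimes F\colon (C(Z)\hat{\otimes}\mathcal{S})\rtimes F\to \mathcal{A}(\mathcal{H},\eta)\rtimes F$, where $\beta\colon C(Z)\hat{\otimes}\mathcal{S}\to\mathcal{A}(\mathcal{H},\eta)$ is the pointwise Bott homomorphism recalled before the statement. Since $F$ is finite, the transformation groupoid $Z\rtimes F$ is proper, so the field version of the Higson--Kasparov--Trout Bott periodicity theorem in its groupoid-equivariant form (\cite{HKT,Tu}) applies and shows that $(\beta\rtimes F)_*$ is an isomorphism. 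Propagating this back up the induction yields the theorem.

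The step I expect to be the main obstacle is not any one of the above in isolation but the verification that $\beta_L$ genuinely respects the whole cutting-and-pasting apparatus --- that it restricts to $\Gamma$-invariant subcomplexes, intertwines the Mayer--Vietoris connecting maps, and is natural under the strong Lipschitz homotopies --- so that the five lemma may legitimately be invoked at each stage. This needs care precisely because $\beta_L$ is an asymptotic morphism rather than a $*$-homomorphism, so one must work throughout with its induced map on $K$-theory and the associated mapping-cone bookkeeping, as in~\cite{Yu}. Once that naturality is secured, the remainder is the established template of Theorem~\ref{sec:4:thm:4.8} and~\cite{Yu}, together with equivariant Bott periodicity for the finite groups $F_j$.
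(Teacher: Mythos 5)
Your proposal follows essentially the same route as the paper: the paper's proof of Theorem~\ref{sec:5:thm:5.1} is exactly a Mayer--Vietoris plus five lemma induction on the skeleta of $P_d(X)$, reducing to the $0$-dimensional case, which is then settled by the (field/equivariant) Higson--Kasparov--Trout--Tu Bott periodicity theorem. Your write-up simply supplies the details the paper leaves implicit (naturality of $\beta_L$, splitting over orbits with finite stabilizers, the Eilenberg swindle and the identification with crossed products by the finite stabilizers), and these elaborations are consistent with how the analogous reductions are carried out in the proofs of Proposition~\ref{sec:3:thm:3.11} and Theorem~\ref{sec:4:thm:4.8}.
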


\begin{proof}
By the Mayer-Vietoris sequence and five lemma argument and induction on the dimension of the skeletons of $P_d((X)$, it is sufficient to prove the $0$-dimensional case. In the $0$-dimensional case, the isomorphism follows from the the Bott periodicity.
\end{proof}

\begin{proof}[{\bf Proof of Theorem \ref{sec:1:thm:1.2}.}]

For any $d>0$, let $$i_*:K_*(C_L^*(P_d(X))^{\Gamma})\rightarrow K_*(C_L^*(P_d(X), C(Z))^{\Gamma})$$ be the homomorphism induced by the inclusion map $i:\mathbb{C}\rightarrow C(Z)$. Then $i_*$ is an isomorphism by using the Mayer-Vietoris sequence and five lemma argument by the induction on the dimension of the skeletons of $P_d(X)$.

Let $\widetilde{\tau}: C_L^*(P_d(X))^\Gamma\rightarrow C_{\pi,L}^*(P_d(X))^{\Gamma}$ be the inclusion map. It induces a homomorphism $\widetilde{\tau}_*: K_*(C_L^*(P_d(X))^\Gamma)\rightarrow K_*(C_{\pi,L}^*(P_d(X))^{\Gamma})$. We get the following commutative diagram:

$$
\xymatrix{
  \lim\limits_{d\rightarrow \infty}K_*(C_L^*(P_d(X))^\Gamma) \ar[d]_{i_*} \ar[r]^{\widetilde{\tau}_*} & \lim\limits_{d\rightarrow \infty}K_*(C_{\pi,L}^*(P_d(X))^{\Gamma}) \ar[d]^{(i')_*} \\
  \lim\limits_{d\rightarrow \infty}K_*(C_L^*(P_d(X), C(Z))^\Gamma) \ar[d]_{(\beta_L)_*} \ar[r] & \lim\limits_{d\rightarrow \infty}K_*(C_{\pi,L}^*(P_d(X), C(Z))^{\Gamma}) \ar[d]^{(\beta_{\pi,L})_*} \\
  \lim\limits_{d\rightarrow \infty}K_*(C_L^*(P_d(X), \mathcal {A}(\mathcal {H},\eta))^\Gamma) \ar[r]^{\tau_*} & \lim\limits_{d\rightarrow \infty}K_*(C_{\pi,L}^*(P_d(X), \mathcal {A}(\mathcal {H},\eta))^\Gamma)   }
$$
where $i'_*:K_*(C_{\pi,L}^*(P_d(X))^{\Gamma})\rightarrow K_*(C_{\pi,L}^*(P_d(X), C(Z))^{\Gamma})$ induced by the inclusion map $i:\mathbb{C}\rightarrow C(Z)$. From Theorem \ref{sec:5:thm:5.1} and Theorem \ref{sec:4:thm:4.10}, we know that $\tau_*\circ (\beta_L)_*\circ i_*$ is injective. So $\widetilde{\tau}_*$ is an injectivity.
By another commutative diagram
$$
\xymatrix{
                &         \lim\limits_{d\rightarrow \infty}K_*(C_{\pi,L}^*(P_d(X))^{\Gamma}) \ar[d]^{(e_\pi)_*}     \\
  \lim\limits_{d\rightarrow \infty}K_*(C_L^*(P_d(X))^\Gamma) \ar[ur]^{\widetilde{\tau}_*} \ar[r]_{e_*} & \lim\limits_{d\rightarrow \infty}K_*(C^*(P_d(X))^{\Gamma}),            }
$$
we know that $e_*=(e_\pi)_*\circ \widetilde{\tau}_*$ is an injectivity since $(e_\pi)_*$ is injective by Theorem \ref{sec:3:thm:3.12}.

\end{proof}
\par\noindent{\bf Acknowledgement.} The authors wish to thank the referees for many valuable and constructive suggestions. They also  
would like to thank Jintao Deng for many useful discussions, and for carefully reading the articles and making many helpful comments.

\bigskip

\bigskip

\footnotesize

\noindent  Benyin Fu \\
College of Statistics and Mathematics,\\
Shanghai Lixin University of Accounting and Finance\\
Shanghai 201209, P. R. China.\\
E-mail: \url{fuby@lixin.edu.cn}\\

\noindent  Xianjin Wang \\
College of Mathematics and Statistics,\\
Chongqing University, (at Huxi Campus),\\
Chongqing 401331, P. R. China.\\
E-mail: \url{xianjinwang@cqu.edu.cn}\\

\noindent Guoliang Yu \\
Department of Mathematics,\\
Texas A\&M University, TX, 77843-3368, USA\\
E-mail: \url{guoliangyu@math.tamu.edu}\\

\end{document}